\documentclass[10pt]{amsart}
\usepackage{amssymb}
\usepackage{amsbsy}
\usepackage{amscd}
\usepackage[mathscr]{eucal}
\usepackage{verbatim}
\usepackage{version}
\usepackage{color}
\usepackage[matrix,arrow]{xy}

\def\cal{\mathcal}
\def\Bbb{\mathbb}
\def\frak{\mathfrak}

\newenvironment{NB}{
\color{red}{\bf NB}. \footnotesize 
}{}
\excludeversion{NB}
\newenvironment{NB2}{
\color{blue}{\bf NB}. \footnotesize
}{}





\newcommand{\Pic}{\operatorname{Pic}}

\newcommand{\Coh}{\operatorname{Coh}}

\newcommand{\Ext}{\operatorname{Ext}}
\newcommand{\Hom}{\operatorname{Hom}}

\newcommand{\im}{\operatorname{im}}

\newcommand{\rk}{\operatorname{rk}}

\newcommand{\chr}{\operatorname{char}}

\newcommand{\NS}{\operatorname{NS}}
\newcommand{\coker}{\operatorname{coker}}

\newcommand{\Hilb}{\operatorname{Hilb}}

\newcommand{\tr}{\operatorname{tr}}

\newcommand{\id}{\operatorname{id}}

\font\b=cmr10 scaled \magstep5
\def\bigzerou{\smash{\lower1.7ex\hbox{\b 0}}}

\setlength{\topmargin}{-2.0cm}
\setlength{\oddsidemargin}{-0.25cm}
\setlength{\evensidemargin}{-0.25cm}
\setlength{\textheight}{25.5cm}
\setlength{\textwidth}{16.5cm}
\numberwithin{equation}{section}

\theoremstyle{plain}
 \newtheorem{thm}{Theorem}[section]
 \newtheorem{lem}[thm]{Lemma}
 \newtheorem{prop}[thm]{Proposition}

\theoremstyle{definition}
 \newtheorem{defn}[thm]{Definition}
 
\theoremstyle{remark}
 \newtheorem{rem}[thm]{Remark}

\setcounter{section}{-1}

\pagestyle{plain}

\begin{document}

\title{A note on stable sheaves on Enriques 
surfaces II}
\author{K\={o}ta Yoshioka}
\address{Department of Mathematics, Faculty of Science,
Kobe University,
Kobe, 657, Japan
}
\email{yoshioka@math.kobe-u.ac.jp}

\thanks{
The author is supported by the Grant-in-aid for 
Scientific Research (No.\ 26287007,\ 24224001), JSPS}
\keywords{Enriques surfaces, stable sheaves}

\begin{abstract}
We shall give a necessary and sufficient condition for the 
existence of stable sheaves on non-classical Enriques surfaces.
\end{abstract}

\maketitle

\renewcommand{\thefootnote}{\fnsymbol{footnote}}
\footnote[0]{2010 \textit{Mathematics Subject Classification}. 
Primary 14D20.}

\section{Introduction}

Let $X$ be an arbitrary Enriques surface over an algebraically 
closed field $k$.
In \cite{Y:Enriques}, we studied the non-emptyness of the 
moduli space of stable sheaves on classical
Enriques surfaces, i.e.,
$K_X \ne 0$ based on results by Kim \cite{Kim1} 
and Nuer \cite{Nuer}.
In this note, we treat the case of non-classical Enriques surfaces, 
i.e., $K_X=0$, and prove the same result holds. 
For a coherent sheaf $E$ on $X$, we define the Mukai vector $v(E)$
of $E$ as
$v(E):=(\rk E,c_1(E),\chi(E)-\frac{\rk E}{2})
\in {\Bbb Z} \times \NS(X) \times {\Bbb Q}$.
For two Mukai vectors $v=(r,L,a)$ and $v'=(r',L',a')$,
$\langle v,v' \rangle:=(L,L')-ra'-r'a \in {\Bbb Z}$ is the Mukai pairing.  
Let $M_H(v)$ be the moduli scheme of stable sheaves $E$
with $v(E)=v$. If $v$ is primitive, then
$M_H(v)$ is projective for a general $H$.
\begin{NB}
\begin{thm}\label{thm:spherical}
For a Mukai vector  $v=(r,L,a)$ with $\langle v^2 \rangle=-2$,
$M_H(v) \ne \emptyset$ if and only if 
$L \equiv D +\frac{r}{2}K_X \mod 2$ where $D$ is a nodal cycle. 
\end{thm}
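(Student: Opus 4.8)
The plan is to reduce everything to a short list of base cases by moving $v$ around its orbit under a group of autoequivalences, after first recording that the hypothesis $\langle v^2\rangle=-2$ pins down the geometry of $M_H(v)$ very rigidly. By Riemann--Roch, any $E$ with $v(E)=v$ satisfies $\chi(E,E)=-\langle v^2\rangle=2$; since we are in the non-classical case $K_X=0$, Serre duality gives $\Ext^2(E,E)\cong\Hom(E,E)^\vee$, so a stable $E$ has $\Hom(E,E)=\Ext^2(E,E)=k$ and hence $\Ext^1(E,E)=0$. Thus $M_H(v)$ is either empty or a single reduced point, and the theorem is purely a statement about which Mukai vectors are realized by a (necessarily rigid) stable sheaf.

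Next I would set up the symmetries acting on Mukai vectors and check that both the non-emptiness of $M_H(v)$ and the congruence $L\equiv D+\frac{r}{2}K_X\pmod 2$ are invariant under them. The relevant operations are tensoring by line bundles $E\mapsto E\otimes\mathcal{O}_X(C)$, taking duals, the spherical twists $T_{\mathcal{O}_C}$ along the structure sheaves of $(-2)$-curves $C$ (which act on $\NS(X)$ through the reflections $s_C$ generating the Weyl group $W$ of the nodal lattice), and the Fourier--Mukai transform attached to a fine moduli space. The $W$-orbit of the set of classes $D+\frac{r}{2}K_X$ with $D$ a nodal cycle is itself, since $W$ permutes the $(-2)$-curves and preserves $2\NS(X)$; and each autoequivalence carries a stable sheaf to a stable sheaf for a suitable polarization. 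Using the chamber decomposition of the positive cone, I would then replace $v$ by a representative of minimal $\rk$ with $L$ in the closure of the fundamental chamber.

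For the resulting normal forms I would exhibit the spherical sheaves explicitly and verify the equivalence ``realizable $\Leftrightarrow$ congruence'' by hand. In rank $0$ the spherical objects are the sheaves $\mathcal{O}_C$ and their twists supported on $(-2)$-curves, whose first Chern class is a nodal class, so the congruence holds exactly when such an object is available; in positive rank the Fourier--Mukai transform reduces the question to a line bundle or an $\mathcal{O}_X$-type object, where the $\langle v^2\rangle=-2$ constraint forces $c_1$ into $2\NS(X)+D$. Matching the two directions on this finite list, together with the invariance established above, yields the theorem for arbitrary $v$.

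The main obstacle is that the characteristic-$2$, non-classical setting degrades the standard tools used to control the congruence. In characteristic zero one transports the parity of $c_1$ to the canonical double cover $\pi\colon Y\to X$, where a rigid stable $E$ pulls back to an $\iota$-invariant rigid object on the K3 surface $Y$ and $\iota$-invariance forces $c_1(E)$ into the nodal sublattice modulo $2$; here $Y$ need not be a smooth K3 (it can be singular or non-normal), so this lattice- and descent-theoretic input is unavailable. The crux is therefore to make the Fourier--Mukai transforms and spherical twists well defined in this setting and, above all, to compute their action on $c_1$ modulo $2$ directly on $X$---controlling the determinant map $M_H(v)\to\Pic(X)$ through the vanishing $\Ext^1(E,E)=0$---so that the congruence is shown to be both preserved by the reduction and the only obstruction in the base cases.
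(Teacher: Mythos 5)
Your overall frame---move $v$ around by line-bundle twists and Fourier--Mukai transforms, check that both non-emptiness and the congruence are invariant, and then verify a short list of normal forms---matches the first step of the paper's proof, which reduces the general-rank statement to rank $2$ in exactly this way (via \cite[Cor.~4.5]{Y:Enriques} and the transform of Proposition \ref{prop:FM}). The gap is in your base cases. On an Enriques surface a Mukai vector with $\langle v^2\rangle=-2$ necessarily has \emph{even} rank: writing $a=\chi-\frac{r}{2}$, one has $\langle v^2\rangle=(L^2)-r(2\chi-r)$, and $(L^2)$ is even while $r(2\chi-r)$ is odd whenever $r$ is odd. In particular every line bundle (and $\mathcal{O}_X$ itself) has $\langle v^2\rangle=-1$, so your reduction can never terminate at ``a line bundle or an $\mathcal{O}_X$-type object''; the minimal positive-rank terminal case is rank $2$, and that case is not a formality to be ``verified by hand''---it is the entire content of the paper (Theorem \ref{thm:exceptional}). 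Your proposal contains no argument for it. For existence one needs the non-split extension $0\to\mathcal{O}_X(K_X)\to E\to\mathcal{O}_X(D)\to 0$ with $D$ a nodal cycle, together with a proof that such an $E$ is $\mu$-stable (Proposition \ref{prop:h^1}; note that in the non-classical case even the statement $\dim H^0(E)=1$ requires a computation with the connecting homomorphism, precisely because $K_X=0$). For necessity one must normalize a stable spherical $F$ by a twist so that $H^0(F(C))\neq 0$ while $H^0(F(C-C'))=0$ for every effective $C'$, and then prove that this sheaf sits in such an extension with $D$ nodal (Proposition \ref{prop:MO}), which uses the Cossec--Dolgachev vanishing (Lemma \ref{lem:vanishing}), the isotropic-divisor bound (Lemma \ref{lem:phi}), and Artin's criterion for nodal cycles (Lemma \ref{lem:nodal}). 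None of these ingredients, nor substitutes for them, appear in your outline.

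A secondary problem is your reliance on spherical twists $T_{\mathcal{O}_C}$ along $(-2)$-curves and the assertion that ``each autoequivalence carries a stable sheaf to a stable sheaf for a suitable polarization.'' For spherical twists this is unproven and delicate (they do not even preserve the abelian category of sheaves), and it is also unnecessary: the paper never twists by spherical objects, and the Weyl group enters only at the level of divisor classes, to replace a divisor by a nef one inside Lemma \ref{lem:phi}. Your closing paragraph correctly identifies the characteristic-$2$, non-classical difficulties (no smooth K3 double cover, control of $c_1$ modulo $2$, control of the determinant map through rigidity), but it only names them as ``the crux'' without resolving them---and resolving them, through the two propositions above, is exactly where the proof lives.
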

\end{NB}

\begin{thm}\label{thm:exist:nodal}
Let $X$ be an Enriques surface over $k$.
We take $r,s \in {\Bbb Z}$ $(r>0)$ and $L \in \NS(X)$ such
that $r+s$ is even.
Assume that 
$\gcd(r,L,\frac{r+s}{2})=1$, i.e., the Mukai vector $(r,L,\frac{s}{2})$
is primitive.
Then
$M_H(r,L,\tfrac{s}{2}) \ne \emptyset$ for a general $H$
if and only if
\begin{enumerate}
\item
$\gcd(r,L,s)=1$ and $(L^2)-rs \geq -1$ or 
\item 
$\gcd(r,L,s)=2$ and $(L^2)-rs \geq 2$ 
or 
\item
$\gcd(r,L,s)=2$,
$(L^2)-rs =0$ and $L \equiv \frac{r}{2}K_X \mod 2$ or
\item
$(L^2)-rs =-2$,
$L \equiv D+\frac{r}{2}K_X \mod 2$, where 
$D$ is a nodal cycle, that is, $(D^2)=-2$ and $H^1({\cal O}_D)=0$.
\end{enumerate}
\end{thm}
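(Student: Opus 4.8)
The plan is to prove both implications by reducing, through Fourier--Mukai transforms and twists, to a short list of model Mukai vectors, treating the rigid case $\langle v^2\rangle=-2$ by a separate lattice-theoretic argument. The first step is to record the deformation-theoretic input. On any Enriques surface the Riemann--Roch computation gives $\chi(E,F)=-\langle v(E),v(F)\rangle$, so for a stable sheaf $E$ with $v(E)=v$ one has $\dim\Hom(E,E)=1$; since $K_X=0$ in the non-classical case, Serre duality yields $\Ext^2(E,E)\cong\Hom(E,E)^\vee\cong k$, whence $\dim\Ext^1(E,E)=\langle v^2\rangle+2$. The phenomenon special to characteristic $2$ is that $\dim H^1(\mathcal{O}_X)=1$ and $\Pic^\tau(X)$ is non-reduced ($\mu_2$ or $\alpha_2$): the one-dimensional image of $H^1(\mathcal{O}_X)\to\Ext^1(E,E)$ contributes an infinitesimal direction that does not integrate to genuine moduli, so the geometrically meaningful dimension of $M_H(v)$ is $\langle v^2\rangle+1$ when $\langle v^2\rangle\ge 0$. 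I would pin this down first, since it gives the necessary bound $\langle v^2\rangle\ge-1$ when $\gcd(r,L,s)=1$ and isolates $\langle v^2\rangle=-2$ as the exceptional rigid case.

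For necessity I would work in the Mukai lattice on $\mathbb{Z}\oplus\NS(X)\oplus\frac12\mathbb{Z}$. Because $(L^2)$ is even for every $L\in\NS(X)$, the parity of $\langle v^2\rangle=(L^2)-rs$ equals that of $rs$, and this is exactly what separates the odd case $\gcd(r,L,s)=1$ (where $r$ and $s$ may both be odd) from the even case $\gcd(r,L,s)=2$. Reducing $\langle v^2\rangle$ and $L$ modulo $2$ then forces the congruences appearing in (ii) and (iii). The rigid case $\langle v^2\rangle=-2$ is the heart of the matter: such a stable $E$ is spherical, and I would show, using the spherical criterion, that up to the group of autoequivalences introduced below the class $v$ is that of the structure sheaf $\mathcal{O}_D$ of a nodal cycle $D$, which forces $L\equiv D+\frac{r}{2}K_X \bmod 2$ as in (iv).

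For sufficiency I would verify that both the property ``$M_H(v)\ne\emptyset$ for general $H$'' and the numerical conditions (i)--(iv) are invariant under (a) the twists $E\mapsto E\otimes\mathcal{L}$, (b) the Fourier--Mukai transform attached to an elliptic or quasi-elliptic pencil on $X$, and (c) reflections in $(-2)$-classes. These generate a large enough subgroup of the isometry group of the Mukai lattice to move any $v$ satisfying (i)--(iv) to a model vector of small rank: a twisted ideal sheaf of points when $\gcd(r,L,s)=1$, a rank-zero sheaf supported on a curve, or the spherical model $\mathcal{O}_D$. For each model I would construct a stable sheaf explicitly and check its stability for general $H$; the even cases $\gcd(r,L,s)=2$ with $\langle v^2\rangle=0$ under (iii), and the spherical existence in (iv), are settled by exhibiting $\mathcal{O}_D$ together with its twists.

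The step I expect to be the main obstacle is making the Fourier--Mukai reduction function in characteristic $2$ on a non-classical Enriques surface, where the canonical cover is an inseparable $\mu_2$- or $\alpha_2$-torsor rather than an \'etale K3 double cover: transferring stability and the non-emptiness statement across this cover must be reworked, and the non-reducedness of $\Pic^\tau(X)$ carried along at every stage. The second delicate point is the precise determination, in the rigid case, of when a nodal class $D$ with $L\equiv D+\frac{r}{2}K_X$ genuinely supports a stable spherical sheaf for general $H$; ensuring that no characteristic-$2$ pathology intervenes here is the crux of matching the classical result.
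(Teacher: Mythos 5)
Your route diverges fundamentally from the paper's, and the divergence is exactly where the gaps lie. The paper's main labor-saving device, absent from your plan, is lifting to characteristic zero: since $X$ lifts (\cite{Lie}, \cite{ESB}) and $\NS(X) \bmod K_X$ is locally constant, the moduli spaces fit into a projective family over a polarized family of Enriques surfaces, so non-emptiness for every vector with $(L^2)-rs \geq 0$ descends from the known characteristic-zero (classical) case of \cite{Y:Enriques}, \cite{Nuer}. Only the rigid case $(L^2)-rs=-2$ needs a direct argument in characteristic $2$, precisely because its condition (existence of a nodal cycle in a prescribed mod-$2$ class) is not deformation-invariant — the lift may be unnodal. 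In place of this, you propose to redo the whole existence theory in characteristic $2$ via Fourier--Mukai transforms along elliptic or quasi-elliptic pencils, reflections and twists, and you yourself flag the key step (making these transforms behave across inseparable covers, with $\Pic^\tau$ non-reduced) as an unresolved obstacle. That step is not a deferrable technicality: in your scheme it \emph{is} the sufficiency proof. Note also that the paper never touches the canonical $\mu_2$/$\alpha_2$-cover you worry about; its only Fourier--Mukai transform is $\Phi_{X\to X}^{\cal E}$ built from $M_H(2,K_X,0)\cong X$ (Proposition \ref{prop:FM}), used together with \cite[Cor.~4.5]{Y:Enriques} solely to reduce case (iv) to rank $2$.

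Two further steps fail as stated. First, the necessity of the congruence in (iii) is not a parity statement: when $\gcd(r,L,s)=2$ one automatically has $(L^2)-rs\equiv 0 \pmod 4$, and "reducing $L$ and $\langle v^2\rangle$ mod $2$" cannot distinguish vectors with $\langle v^2\rangle=0$ satisfying $L\equiv \frac{r}{2}K_X$ from those violating it; in the paper this necessity rests on the point-counting formula $\# M_H(r,L,\frac{s}{2})'({\Bbb F}_q)=\# X'({\Bbb F}_q)$ from \cite[Cor.~4.5]{Y:Enriques}, a genuinely geometric input. Second, your deformation-theoretic claim that the $H^1(\mathcal{O}_X)$-direction "does not integrate" is unsubstantiated, and the trace splitting ${\cal E}nd(E)=\mathcal{O}_X\oplus{\cal E}nd_0(E)$ it would lean on fails exactly when $\chr(k)=2$ divides $\rk E$ — i.e.\ in the rank-$2$ spherical case at the heart of the theorem; this is why the paper's appendix proves smoothness and dimension statements only for odd rank, via an explicit cocycle-level obstruction computation (Lemma \ref{lem:obst}). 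Finally, in case (iv) you never actually produce a stable sheaf: $\mathcal{O}_D$ and its twists have rank $0$, and transporting them to positive rank presupposes the stability-preserving Fourier--Mukai statement you have not established. The paper instead constructs the rank-$2$ sheaf directly as a non-split extension $0\to\mathcal{O}_X(K_X)\to E\to\mathcal{O}_X(D)\to 0$ and proves it is $\mu$-stable; the genuinely new point in the non-classical case $K_X=0$ is the computation $\dim H^0(E)=1$ via the connecting homomorphism (Proposition \ref{prop:h^1}), together with the converse classification (Proposition \ref{prop:MO}, modifying \cite{Kim1}). Without these ingredients your outline does not close.
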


\begin{rem}
If $(L,H')>0$ for an ample divisor $H'$, then
the same claim holds for $r=0$.
\end{rem}

\begin{NB}
If $(L,H')>0$, $(L^2)=-2$ and $L \equiv D \mod 2$ for a nodal cycle, then
there is a stable locally free sheaf $E$ of rank 2 with $c_1(E)=L+K_X$.
Since $\chi(E)=1$ and $(c_1(E),H)>0$,
$H^0(E) \ne 0$. Then there is an effective divisor $C$ such that
$E(-C)$ has a section and $H^0(E(-C-C'))=0$ for all effective 
divisor $C'$. Then 
as we shall see,  $E(-C)$ fits in an exact sequence
$$
0 \to {\cal O}_X(K_X) \to E(-C) \to {\cal O}_X(D') \to 0
$$
where $D'$ is a nodal cycle.
Hence $c_1(E)-2C=D'+K_X$, which implies
$L=2C+D'$. Therefore $L$ is effective.
\end{NB}

We note that $(L^2)-rs=\langle v(E)^2 \rangle \geq -2$ for any stable sheaves
$E$ with $v(E)=(r,L,\frac{s}{2})$.
Since $X$ is liftable to a field of characteristic 0 by \cite[Thm. 4.10]{Lie}
or \cite[Thm. 5.7]{ESB},
the non-emptyness of the moduli space is a consequence
of the result for the case of characteristic 0, if
$(L^2)-rs \geq 0$.   
Indeed $\NS(X) \mod K_X$ is locally constant 
(\cite[Prop. 4.4]{Lie}, \cite[Cor. 4.3]{ESB}),
which shows that 
we have a family of moduli spaces over a polarized family of Enriques surfaces.
So we treat the case where $(L^2)-rs=-2$. 
\begin{NB}
Over the generic fiber of a family of Enriques surfaces, we have
basis $\{ L_i \}$ of the Picard group.
By extending them to central fiber,
we have a basis of $\NS(X) \mod K_X$.
For each $L_i$, we have a family of moduli spaces, which is 
dominant. By the projectivity of the moduli spaces,
the family is surjective over the base.
\end{NB}

For the Mukai vector $v_0=v({\cal O}_X \oplus {\cal O}_X(K_X)-k_x)=(2,K_X,0)$,
$M_H(v_0)=X$ and we have a Fourier-Mukai transform
$\Phi_{X \to X}^{\cal E}:{\bf D}(X) \to {\bf D}(X)$
which acts on the set of Mukai vectors as
$(r,L+\frac{r}{2}K_X,\frac{s}{2}) \mapsto
(s,-(L+\frac{s}{2}K_X)+s K_X,\frac{r}{2})$
(Proposition \ref{prop:FM}).
As in \cite{Y:Enriques}, 
by using \cite[Cor. 4.5]{Y:Enriques}
we can reduce Theprem \ref{thm:exist:nodal} (iv)
to the following result which was proved by Kim \cite{Kim1}
if $X$ is classical.

\begin{thm}\label{thm:exceptional}
For a Mukai vector $v=(2,L,\frac{s}{2})$ with $(L^2)-2s=-2$,
there is a stable sheaf $E$ such that $v(E)=v$ if and only if 
$L \equiv D +K_X \mod 2$ where $D$ is a nodal cycle. 
\end{thm}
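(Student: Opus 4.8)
The plan is to adapt Kim's argument \cite{Kim1} for the classical case, recontrolling each cohomological step in the non-classical setting, where $K_X=0$ but (in characteristic $2$) $\dim H^1(\mathcal{O}_X)=1$. Since $K_X=0$ the Todd class of $X$ is $(1,0,1)$, so the Mukai pairing computes the Euler form exactly as on a K3 surface, $\chi(E,F)=-\langle v(E),v(F)\rangle$. Hence a stable $E$ with $\langle v(E)^2\rangle=-2$ is simple and, by Serre duality with $K_X=0$, satisfies $\dim\Hom(E,E)=\dim\Ext^2(E,E)=1$; from $\chi(E,E)=2$ one gets $\Ext^1(E,E)=0$, so $E$ is rigid. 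Because tensoring by a line bundle is an isometry of the Mukai lattice preserving both stability and the class of $c_1$ in $\NS(X)/2\NS(X)$, the assertion is invariant under such twists, and I may normalize $L$ within its coset modulo $2\NS(X)$.

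For sufficiency, assume $L\equiv D+K_X\equiv D\pmod 2$ for a nodal cycle $D$. Twisting by a line bundle I reduce to $L=D$, which forces $s=0$, so $v=(2,D,0)$ and $\chi(E)=1$. I then seek $E$ as a non-split extension
\begin{equation}
0\to \mathcal{O}_X\to E\to \mathcal{O}_X(D)\to 0,
\end{equation}
whose class lies in $\Ext^1(\mathcal{O}_X(D),\mathcal{O}_X)=H^1(\mathcal{O}_X(-D))$. Since $\chi(\mathcal{O}_X(D))=0$ and $H^0(\mathcal{O}_X(-D))=0$, Serre duality gives $\dim H^1(\mathcal{O}_X(-D))=\dim H^0(\mathcal{O}_X(D))\ge 1$, so a non-split extension exists. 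It remains to check that for a suitable polarization $H$ with $(D,H)>0$ a general such $E$ is $H$-stable: any destabilizing sub-line-bundle $\mathcal{O}_X(A)\hookrightarrow E$ must map nontrivially to $\mathcal{O}_X(D)$, and the slope inequality together with $(D^2)=-2$ forces $A=D$ and the composite $\mathcal{O}_X(D)\to E\to\mathcal{O}_X(D)$ to be an isomorphism, contradicting non-splitness.

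For necessity, let $E$ be stable with $v(E)=(2,L,\tfrac{s}{2})$ and $\langle v(E)^2\rangle=-2$. After twisting I may assume $\mu_H(E)>0$, so $H^2(E)=\Hom(E,\mathcal{O}_X)^\vee=0$ and $\dim H^0(E)\ge\chi(E)=1$. Choosing $C$ effective maximal with $H^0(E(-C))\ne 0$ and absorbing it into the twist, a section of $E(-C)$ yields
\begin{equation}
0\to \mathcal{O}_X\to E(-C)\to I_Z(D')\to 0
\end{equation}
with $Z$ zero-dimensional. The numerical identities $\langle v(E)^2\rangle=-2$ and $\chi(E(-C))=1$ force $Z=\emptyset$ and $(D'^2)=-2$, while rigidity of $E$, via the long exact sequences computing $\Ext^\bullet(E,E)$ from this extension, forces $H^1(\mathcal{O}_{D'})=0$; thus $D'$ is a nodal cycle and $L=c_1(E)=2C+D'\equiv D'+K_X\pmod 2$.

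The main obstacle is precisely the non-classical phenomenon that, in characteristic $2$ with $K_X=0$, one has $H^1(\mathcal{O}_X)\ne 0$ and the canonical cover is not a smooth K3 surface. Consequently the Kodaira-type vanishing and the pullback-to-the-cover arguments available to Kim are not at hand, so the groups entering both the existence of the extension and the rigidity/stability computations, namely $H^i(\mathcal{O}_X(\pm D))$ and the $\Ext$ groups above, must be controlled directly. Deducing $H^1(\mathcal{O}_{D'})=0$ from rigidity, and establishing the $H$-stability of the constructed extension, are where this extra care is concentrated.
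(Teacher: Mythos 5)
Your proposal follows the same skeleton as the paper's proof (and Kim's): for sufficiency, build $E$ as a non-split extension $0 \to \mathcal{O}_X \to E \to \mathcal{O}_X(D) \to 0$; for necessity, twist until $E$ has a section that dies on every effective divisor and analyze the resulting sequence $0 \to \mathcal{O}_X \to E \to I_Z(D') \to 0$. The extension-existence step is fine. But the three steps you leave as assertions are precisely the substance of the proof, and two of them are not valid deductions as stated.

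In the sufficiency direction, you claim that a destabilizing sub-line-bundle $\mathcal{O}_X(A)\hookrightarrow E$, which must map nontrivially to $\mathcal{O}_X(D)$, is forced ``by the slope inequality together with $(D^2)=-2$'' to have $A=D$. This is false: a nonzero map $\mathcal{O}_X(A)\to\mathcal{O}_X(D)$ only says $D-A$ is effective, which is perfectly compatible with $(A,H)\geq (D,H)/2$ and $A\neq D$. For example, a nodal cycle can have the form $D=C_1+C_2$ with $C_1,C_2$ smooth rational $(-2)$-curves and $(C_1,C_2)=1$; then $A=C_1$ has $D-A=C_2$ effective and satisfies the slope inequality whenever $(C_1,H)\geq(C_2,H)$. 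Whether such an $\mathcal{O}_X(A)$ actually embeds in $E$ is a cohomological question about the extension class, and ruling it out needs the nodal hypothesis in an essential way, not just $(D^2)=-2$ and non-splitness; moreover $\dim\Ext^1(\mathcal{O}_X(D),\mathcal{O}_X)=h^0(\mathcal{O}_X(D))=1$ here, so there is no ``general $E$'' to retreat to. The paper's Proposition \ref{prop:h^1}(2) does this work: any destabilizing sequence $0\to\mathcal{O}_X(A)\to E\to I_Z(B)\to 0$ is shown, after several case distinctions, to produce an effective decomposition $D=C+C'$ with $(C^2)\geq 0$, contradicting Artin's criterion for nodality (Lemma \ref{lem:nodal}(2)). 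A secondary issue: you only claim stability for ``a suitable polarization,'' while the statement feeding into Theorem \ref{thm:exist:nodal} needs it for a general $H$; the paper gets $\mu$-stability for every ample $H$.

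In the necessity direction there are two gaps. First, you assert that the numerical identities ``force $Z=\emptyset$ and $(D'^2)=-2$.'' They do not: Riemann--Roch gives only $(D'^2)=4\deg Z-2$, which is consistent with every $\deg Z\geq 0$. Excluding $\deg Z>0$ is the core of the paper's Proposition \ref{prop:MO}: if $(D'^2)\geq 2$, Lemma \ref{lem:phi} (Cossec--Dolgachev) provides an isotropic effective $f$ with $0<(D',f)\leq\sqrt{(D'^2)}$, one computes $\chi(E(-f))\geq \deg Z+1-\sqrt{4\deg Z-2}>0$, and either alternative ($H^0(E(-f))\neq 0$ or $\Hom(E,\mathcal{O}_X(K_X+f))\neq 0$) contradicts the maximality of your twist $C$. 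Second, your claim that rigidity of $E$, ``via the long exact sequences,'' forces $H^1(\mathcal{O}_{D'})=0$ is unsubstantiated, and no argument of this kind appears to be available; the paper proves nodality not from rigidity but from the vanishing theorem Lemma \ref{lem:vanishing}: if $D'=C+C'$ were an effective decomposition with $(C^2)\geq 0$, then after extracting a nef part $C_1$ (or a primitive isotropic part) one gets $H^1(\mathcal{O}_X(-C_1+K_X))=0$, so the section of $\mathcal{O}_X(D'-C_1)$ lifts to a section of $E(-C_1)$, again contradicting maximality. These vanishing statements, valid in positive characteristic, are exactly where the non-classical, characteristic-2 difficulties you flag at the end are resolved; they cannot be deferred as ``extra care,'' since without them neither direction of the theorem is proved.
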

By modifying Kim's proof \cite{Kim1},
we give a proof which works for non-classical case.

\subsection{Preliminaries}
Throughout this note, $X$ denotes an Enriques surface over $k$.
By the work of Bombieri and Mumford \cite{BM},
there are two classes of Enriques surfaces. 
\begin{enumerate}
\item[1.]
{\bf Classical Enriques surface}.
$K_X \ne 0$, $H^1({\cal O}_X)=H^2({\cal O}_X)=0$ and
$\Pic(X)$ is smooth.
\item[2.]
{\bf Non-classical Enriques surface}.
$K_X=0$ and 
$\dim H^1({\cal O}_X)=\dim H^2({\cal O}_X)=1$. 
In this case, $\chr(k)=2$ and $\Pic(X)$ is non-reduced. 
\end{enumerate}
The torsion free quotient of $\NS(X)$ is unimodular. Thus
$\NS(X)/{\Bbb Z}K_X \cong U \oplus E_8$, where $U$ is
the hyperbolic lattice \cite{I}. 
For more detail of Enriques surfaces, we refer
\cite{BM}, \cite{CD}, \cite{ESB}, \cite{Lie}.
In this note, $D=D'$ means
$D=D'$ in $\NS(X)=\Pic(X)$, that is,
${\cal O}_X(D) \cong {\cal O}_X(D')$.

For the proof of Theorem \ref{thm:exceptional}, 
we use the following vanishing theorem in \cite{CD}.
\begin{lem}\label{lem:vanishing}
Let $D$ be an effective divisor which is nef. 
\begin{enumerate}
\item[(1)]
If $(D^2)>0$, then
Then $H^1({\cal O}_X(-D))=H^1({\cal O}_X(-D+K_X))=0$.
\item[(2)]
If $(D^2)=0$ and $D$ is primitive in $\NS(X)$, then
$H^1({\cal O}_X(-D+K_X))=0$.
\end{enumerate}
\end{lem}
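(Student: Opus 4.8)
The plan is to attack the two vanishings through the restriction sequence of the effective divisor $D$ and its twist by $K_X$, namely $0\to{\cal O}_X(-D)\to{\cal O}_X\to{\cal O}_D\to 0$ and $0\to{\cal O}_X(-D+K_X)\to{\cal O}_X(K_X)\to{\cal O}_X(K_X)|_D\to 0$. Since $(D^2)>0$ and $D$ is nef and effective, $D$ is big and hence numerically connected, so $h^0({\cal O}_D)=1$ and the restriction $H^0({\cal O}_X)\to H^0({\cal O}_D)$ is an isomorphism; moreover $H^0({\cal O}_X(-D))=0$, because an effective divisor numerically equivalent to the nonzero anti-nef class $-D$ cannot exist. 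The first long exact sequence therefore produces an injection $H^1({\cal O}_X(-D))\hookrightarrow H^1({\cal O}_X)$. For part (2) I would instead use Riemann--Roch: as $K_X$ is numerically trivial, $\chi({\cal O}_X(D))=1+\tfrac12(D^2)$ and $H^2({\cal O}_X(D))\cong H^0({\cal O}_X(K_X-D))^\vee=0$ for nef nonzero $D$, so the vanishing there reduces to an $h^0$ computation.

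In the classical case $H^1({\cal O}_X)=H^1({\cal O}_X(K_X))=H^0({\cal O}_X(K_X))=0$. The injection above then gives $H^1({\cal O}_X(-D))=0$ at once, while the $K_X$-twisted sequence collapses to an isomorphism $H^1({\cal O}_X(-D+K_X))\cong H^0({\cal O}_X(K_X)|_D)$. Thus in this case everything reduces to showing that the restriction of the canonical $2$-torsion class to the big divisor $D$ is non-trivial, so that the degree-zero line bundle ${\cal O}_X(K_X)|_D$ on the connected curve $D$ has no sections.

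In the non-classical case $K_X=0$, so the two assertions of (1) coincide, but now $H^1({\cal O}_X)\cong H^2({\cal O}_X)\cong k$. The injection $H^1({\cal O}_X(-D))\hookrightarrow H^1({\cal O}_X)$ only bounds $h^1({\cal O}_X(-D))\le 1$, and the real content is that the restriction map $H^1({\cal O}_X)\to H^1({\cal O}_D)$ is injective, i.e.\ the distinguished generator of $H^1({\cal O}_X)$ arising from the non-reducedness of $\Pic(X)$ survives on $D$. This injectivity --- equivalently the non-triviality on $D$ of the relevant canonical/Frobenius class --- is where I expect the main difficulty to lie, since it is exactly the point at which Kodaira-type vanishing can fail in characteristic $2$, and it must be settled using the structure theory of non-classical Enriques surfaces and their Picard schemes in \cite{CD}, \cite{BM} rather than by a formal cohomological argument.

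Finally, for part (2) with $(D^2)=0$ and $D$ primitive, I would identify $D$ with a half-fiber of the genus-one (elliptic or quasi-elliptic) fibration it defines, the full fiber class being $2D$. A half-fiber is rigid, so $|D|=\{D\}$ and $h^0({\cal O}_X(D))=1$; combined with $\chi({\cal O}_X(D))=1$ and $H^2({\cal O}_X(D))=0$ this forces $H^1({\cal O}_X(D))=0$. By Serre duality $H^1({\cal O}_X(D))\cong H^1({\cal O}_X(-D+K_X))^\vee$, which is precisely the asserted vanishing. The geometric ingredients I am using here --- numerical connectedness of big nef divisors, the half-fiber description of primitive isotropic nef classes, and the non-triviality of the restricted canonical/Frobenius class --- are all furnished by the classification in \cite{CD}.
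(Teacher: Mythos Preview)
Your argument for part (2) is correct and matches the paper's: Serre duality reduces the claim to $H^1({\cal O}_X(D))=0$, and with $h^0({\cal O}_X(D))=1$ (this is \cite[Cor.~5.7.2]{CD} for a primitive isotropic nef class), Riemann--Roch together with $H^2({\cal O}_X(D))=0$ finishes.

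For part (1) your proposal has two gaps, both of which you flag yourself. The first---showing ${\cal O}_X(K_X)|_D\not\cong{\cal O}_D$ in the classical case---is avoidable by a trick you missed. Since $K_X$ is numerically trivial, $D':=D-K_X$ is again nef with $(D'^2)=(D^2)>0$, and $\chi({\cal O}_X(D'))=\chi({\cal O}_X(D))>0$ together with $H^2({\cal O}_X(D'))=H^0({\cal O}_X(2K_X-D))^\vee=0$ forces $D'$ effective. Hence $H^1({\cal O}_X(-D+K_X))=H^1({\cal O}_X(-D'))$ is another instance of the \emph{first} vanishing, and both assertions in (1) reduce to the single claim $H^1({\cal O}_X(-D))=0$ for nef effective $D$ with $(D^2)>0$. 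This is exactly how the paper proceeds; it removes any need to analyse $K_X|_D$.

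The second gap---the injectivity of $H^1({\cal O}_X)\to H^1({\cal O}_D)$ in the non-classical case---is genuine, and is essentially equivalent to the vanishing you are trying to prove, so your restriction-sequence argument cannot close it on its own. The paper does not prove this from scratch either: it invokes \cite[Cor.~3.1.3]{CD} (and refers to \cite{Mukai}, \cite{SB}) for the characteristic-$p$ vanishing $H^1({\cal O}_X(-D))=0$ on nef big divisors. So your instinct about where the difficulty lies is accurate, but the resolution in the paper is by citation to the established vanishing theorem rather than by a direct argument through the structure of $\Pic(X)$.
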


\begin{proof}
(1)
Since $(D,H)=(D-K_X,H)>0$ and $\chi({\cal O}_X(D-K_X))=
\chi({\cal O}_X(D))>0$,
$D-K_X$ is also effective.
Hence we shall prove $H^1({\cal O}_X(-D))=0$.
If $(D^2)>0$, then $H^1({\cal O}_X(-D))=0$ by
\cite[Cor. 3.1.3]{CD} (see also \cite[Thm. 3]{Mukai} 
and \cite[Cor. 8]{SB}).
\begin{NB}
If there is a non-trivial extension
$0 \to {\cal O}_X \to E \to {\cal O}_X(D) \to 0$,
then $E$ is not stable by $\langle v(E)^2 \rangle=-(D^2)-4<-2$.
Hence there is an exact sequence
$0 \to {\cal O}_X(A) \to E \to I_Z(B) \to 0$ such that
$(B-A,H)<0$ for an ample divisor $H$, 
where $(A,B)+\deg Z=c_2(E)=0$ and $A+B=c_1(E)=D$.
Then $((B-A)^2)=(D^2)-4(A,B)=(D^2)+4 \deg Z>0$.
Hence $A-B$ is effective.
In particular $(D-2B,L)=(A-B,D) \geq 0$.
Since $(A,B)=-\deg Z \leq 0$, $(D,B) \leq (B^2)$.
By Hodge index theorem,
$(B,L)^2 \geq (B^2)(D^2) \geq (D,B)(D^2)$.
If $\phi:{\cal O}_X \to E \to I_Z(B)$ is a zero map,
then ${\cal O}_X$ is a subsheaf of ${\cal O}_X(A)$, which implies
$A=0$. Then $A-B=2A-L=-L$, which is a contradiction. 
Therefore $\phi \ne 0$.
If $\phi$ is an isomorphism, then
the extension splits. Hence $B$ is effective.
In particular $(B,L) \geq 0$.
If $(B,L)=0$, then $(B^2)<0$. Hence $(L,B) \leq (B^2)<0$, which
is a contradiction. Thus $(B,L)>0$.
Then $(B,L) \geq (L^2) \geq 2(B,L) \geq 0$, which does not hold. 
Therefore $H^1({\cal O}_X(-D))=0$.
\end{NB}

(2)
If $(D^2)=0$, then
by \cite[Cor. 5.7.2]{CD}, 
$\dim H^0({\cal O}_X(D))=1$.
Hence $H^1({\cal O}_X(-D+K_X))=H^1({\cal O}_X(D))^{\vee}=0$.
\end{proof}

\begin{lem}[{\cite[Cor. 2.7.1]{CD}}]\label{lem:phi}
For an effective divisor $D$ with $(D^2)>0$,
there is an effective and isotropic divisor $f$ with
 $0<(D,f) \leq \sqrt{(D^2)}$.
\end{lem}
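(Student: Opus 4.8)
The plan is to reduce the statement to a bound on the quantity
\[
\phi(D):=\min\{(D,f)\mid f\in\NS(X),\ (f^2)=0,\ (D,f)>0\},
\]
namely $\phi(D)^2\le(D^2)$, the desired divisor being a class realizing this minimum. I would work in the torsion-free quotient $\NS(X)/{\Bbb Z}K_X\cong U\oplus E_8$, an even unimodular lattice of signature $(1,9)$, and first reduce to the case $D$ nef (replacing $D$ by the nef divisor in its orbit under the reflections in $(-2)$-curves, which preserves $(D^2)$). Since $(D^2)>0$, the sublattice $D^\perp$ is negative definite, so any isotropic $f\ne0$ has $(D,f)\ne0$; as $D$ is nef and effective, $(D,f)>0$ for every nonzero effective isotropic $f$. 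Hence the minimum defining $\phi(D)$ is a positive integer, it is attained, and a minimizer $E$ is primitive (otherwise a proper divisor of $E$ would give a smaller value).

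Next I would settle effectivity, so that the minimum does not depend on the distinction between an isotropic class and an effective one. For any isotropic $f$, Riemann--Roch gives $\chi({\cal O}_X(f))=\tfrac12\bigl((f^2)-(f,K_X)\bigr)+\chi({\cal O}_X)=1$, because $K_X$ is numerically trivial and $\chi({\cal O}_X)=1$ for both classical and non-classical $X$. If moreover $(D,f)>0$, then $(K_X-f,D)=-(D,f)<0$, so $K_X-f$ cannot be effective (it would otherwise meet the nef divisor $D$ non-negatively); thus $h^2({\cal O}_X(f))=h^0({\cal O}_X(K_X-f))=0$ and $h^0({\cal O}_X(f))\ge1$. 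Consequently every isotropic class pairing positively with $D$ is effective, and $\phi(D)$ is genuinely realized by an effective isotropic divisor.

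The heart of the argument is the lattice bound $\phi(D)^2\le(D^2)$. Writing $m:=\phi(D)=(D,E)$, I would complete the primitive isotropic $E$ to a hyperbolic plane $\langle E,E'\rangle\cong U$ (possible in a unimodular lattice), getting an orthogonal splitting $\NS(X)/{\Bbb Z}K_X=\langle E,E'\rangle\oplus M$ with $M$ even, unimodular, negative definite of rank $8$, hence, up to sign, the lattice $E_8$. Write $D=\alpha E+mE'+w$ with $w\in M$, so $(D^2)=2\alpha m+(w^2)$. For each $v\in M$ the vector $f_v:=-\tfrac12(v^2)E+E'+v$ is isotropic, and a direct computation gives
\[
(D,f_v)=\alpha-\tfrac{m}{2}(v^2)+(w,v)=\frac{(D^2)}{2m}-\frac{m}{2}\Bigl(v-\tfrac{w}{m}\Bigr)^2 .
\]
Since the covering radius of $E_8$ equals $1$, there is $v_0\in M$ with $-\bigl(v_0-\tfrac{w}{m}\bigr)^2\le1$, whence $(D,f_{v_0})\le\tfrac{(D^2)}{2m}+\tfrac{m}{2}$. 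As $f_{v_0}$ is isotropic with $(D,f_{v_0})>0$, it is effective by the previous step, so minimality yields $m\le(D,f_{v_0})$. Combining, $m\le\tfrac{(D^2)}{2m}+\tfrac{m}{2}$, i.e. $m^2\le(D^2)$, and $f:=E$ is an effective isotropic divisor with $0<(D,f)=m\le\sqrt{(D^2)}$.

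I expect the main obstacle to be the clean invocation of minimality for the auxiliary class $f_{v_0}$: one must be certain that $f_{v_0}$ lies on the effective side (guaranteed by the Riemann--Roch step once $(D,f_{v_0})>0$), that $E$ is primitive so the completion to $U$ exists, and that the covering-radius input for $E_8$ is applied in the correct negative-definite normalization. The reduction of a general effective $D$ to the nef case via reflections in $(-2)$-curves is the other delicate point, since such reflections need not preserve effectivity of arbitrary classes; I would phrase this reduction through the chamber structure of the positive cone, tracking only $(D^2)$ and the value $\phi$.
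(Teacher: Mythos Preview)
Your argument is correct and follows the same overall skeleton as the paper's proof: reduce to $D$ nef via reflections in $(-2)$-curves, obtain an isotropic class $f$ with $0<(D,f)\le\sqrt{(D^2)}$, and then deduce effectivity of $f$ from $\chi(\mathcal{O}_X(f))=1$ together with the fact that $K_X-f$ cannot be effective when $D$ is nef and $(D,f)>0$.

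The genuine difference is in the middle step. The paper simply cites \cite[Cor.~2.7.1]{CD} for the existence of an isotropic $f$ with $0\le(D,f)\le\sqrt{(D^2)}$ and uses the Hodge index theorem to rule out $(D,f)=0$. You instead reprove this bound from scratch: completing a primitive isotropic minimizer $E$ to a copy of $U$ inside $U\oplus E_8$, parametrizing isotropic vectors by $f_v=-\tfrac12(v^2)E+E'+v$, and invoking the covering radius of $E_8$ to find $v_0$ with $(D,f_{v_0})\le\tfrac{(D^2)}{2m}+\tfrac{m}{2}$. This is exactly the lattice-theoretic content behind Cossec--Dolgachev's $\phi$-function, so your route is more self-contained but relies on the (nontrivial, though classical) input that the $E_8$ covering radius is $1$. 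Your verification that $(D,f_{v_0})\ge\tfrac{(D^2)}{2m}>0$ (because $(v_0-w/m)^2\le 0$ in the negative-definite $M$) is the step that makes the minimality argument close, and it is fine.

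On the point you flag about the Weyl-group reduction: the paper handles this the same way, observing (in a remark) that if $w$ takes $D$ to a nef effective divisor and $f'$ works for $w(D)$, then $f=w^{-1}(f')$ is again effective and works for $D$; reflections in actual $(-2)$-curves preserve the effective cone, so tracking only $(D^2)$ and $\phi$ as you suggest is adequate.
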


\begin{proof}
By the action of the Weyl group associated to
the root system of smooth rational curves on $X$,
we may assume that $D$ is nef. 
By \cite[Cor. 2.7.1]{CD},
there is a divisor $f$ such that $(f^2)=0$ and
$0 \leq (D,f) \leq \sqrt{(D^2)}$. 
By the Hodge index theorem,
$(D,f)>0$.
$(f^2)=0$ implies $f$ or $K_X-f$ is effective.
If $K_X-f$ is effective, then $(D,-f) \geq 0$ by the nefness of $D$.
Hence $f$ is effective. 
\begin{NB}
Even if $D$ is a non-nef effective divisor,
there is an effective divisor $f$ with $0<(D,f) \leq \sqrt{(D^2)}$.
Indeed, there is a Weyl group action $w$ with
$w(D)$ is a nef effective divisor.
Then there is an effective and isotropic divisor $f'$ such that
$0<(w(D),f') \leq \sqrt{(w(D)^2)}=\sqrt{(D^2)}$.
Since $f=w(f')$ is effective and $(w(D),f')=(D,w(f'))$,
we get the claim.

Note:
For $0 \geq x \geq (C,D)$, $((D+xC)^2)=(D^2)+2x((C,D)-x) \geq (D^2)>0$.
Hence $(D,H)>0$ if and only if $(D+(C,D)C,H)>0$. 
\end{NB}
\end{proof} 

\begin{NB}
\begin{rem}
We do not need $(D,f)>0$ for the proof of Proposition \ref{prop:MO},
although we used it.
\end{rem}
\end{NB}

\begin{lem}[{\cite[Thm. 1.7]{A}}]\label{lem:nodal}
Let $D$ be an effective divisor on $X$ such that $(D^2)=-2$.
\begin{NB}
Let $D$ be an effective divisor on $X$ such that $(D^2)=-2$.
\end{NB}
\begin{enumerate}
\item[(1)]
If $D$ is nodal, i.e., $H^1({\cal O}_D)=0$, then
\begin{enumerate}
\item
${\cal O}_D$ is stable for any $H$.
\item
${\cal O}_D(K_X) \cong {\cal O}_D$. 
\item
$\dim H^0({\cal O}_D)=1$, $\dim H^0({\cal O}_D(D))=0$
and $\dim H^1({\cal O}_D(D))=1$.
\end{enumerate}
\item[(2)]
$D$ is nodal if and only if
$(C^2)<0$ for all decomposition  
$D=C+C'$ by effective divisors.
\end{enumerate}
\end{lem}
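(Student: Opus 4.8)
The plan is to prove part (2) first, since the stability statement (1)(a) and the cohomological statements (1)(b),(1)(c) all follow once the decomposition criterion is available. Throughout I use Riemann--Roch on the Enriques surface in the form $\chi({\cal O}_X(L))=1+\tfrac12(L^2)$ (recall $K_X$ is numerically trivial, so $(L,K_X)=0$). Applying this to $0\to {\cal O}_X(-D)\to {\cal O}_X\to {\cal O}_D\to 0$ gives $\chi({\cal O}_D)=1$, and for any effective $C$ one finds $\chi({\cal O}_C)=-\tfrac12(C^2)$; in particular the arithmetic genus $p_a(C)=1+\tfrac12(C^2)$. Hence $p_a(D)=0$ and $H^1({\cal O}_D)=h^0({\cal O}_D)-1$, so ``$D$ nodal'' is the same as $h^0({\cal O}_D)=1$. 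For the easy direction of (2), suppose $D=C+C'$ with $(C^2)\ge 0$. Then $\chi({\cal O}_C)=-\tfrac12(C^2)\le 0$, while $h^0({\cal O}_C)\ge 1$ since $C$ is effective, so $H^1({\cal O}_C)\ne 0$. Taking cohomology of $0\to {\cal O}_{C'}(-C)\to {\cal O}_D\to {\cal O}_C\to 0$ and using $H^2({\cal O}_{C'}(-C))=0$ (the sheaf lives on a curve), the map $H^1({\cal O}_D)\to H^1({\cal O}_C)$ is surjective, so $H^1({\cal O}_D)\ne 0$ and $D$ is not nodal.

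The converse is the main point. Assume $(C^2)<0$ for every effective $0<C\le D$. Since the intersection form is even, each such $(C^2)\le -2$; taking $C$ a prime component $E$ and using that an integral curve has $p_a\ge 0$ forces $(E^2)=-2$ and $E\cong {\Bbb P}^1$. Because $(D^2)=-2$ while every nonzero piece has square $\le -2$, $D$ cannot split into divisors with disjoint support, so $D$ is connected. I would then run an Artin--Laufer computation sequence: starting from any component $E^{(1)}$, grow $Z_{j+1}=Z_j+E^{(j+1)}$ by adjoining a component meeting $Z_j$. Connectedness provides a component $E$ with $(Z_j,E)>0$, while the hypothesis $p_a(Z_j+E)\le 0$ together with the genus formula $p_a(Z_j+E)=p_a(Z_j)+p_a(E)+(Z_j,E)-1$ forces $(Z_j,E)\le 1$; hence $(Z_j,E)=1$ at every step and $p_a(Z_j)=0$ is preserved. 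In the sequence $0\to {\cal O}_E(-Z_j)\to {\cal O}_{Z_{j+1}}\to {\cal O}_{Z_j}\to 0$ the subsheaf is ${\cal O}_{{\Bbb P}^1}(-1)$, which is acyclic, so $H^i({\cal O}_{Z_{j+1}})=H^i({\cal O}_{Z_j})$. Inductively $h^0({\cal O}_D)=1$ and $H^1({\cal O}_D)=0$, i.e. $D$ is nodal.

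With (2) in hand, (1)(a) is a slope computation: ${\cal O}_D$ is a cyclic ${\cal O}_X$-module, so its pure one-dimensional quotients are exactly the ${\cal O}_C$ with $0<C<D$. Comparing reduced Hilbert polynomials, stability for a given $H$ means $-\tfrac12(C^2)(D,H)>(C,H)$ for all such $C$; since $0<(C,H)/(D,H)<1$, this holds for every ample $H$ precisely when $(C^2)\le -2$ for all $C$, which by (2) is nodality. For (1)(b), $K_X$ is numerically trivial, so ${\cal O}_D(K_X)$ has degree $0$ on every component; running the same computation sequence twisted by any line bundle of total degree $0$ on each component shows $h^0({\cal O}_D(K_X))=1$, and starting the sequence at an arbitrary component shows a nonzero section is nonvanishing on each $E_i$. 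Hence the induced map ${\cal O}_D\to {\cal O}_D(K_X)$ is injective with zero-dimensional cokernel of Euler characteristic $\chi({\cal O}_D(K_X))-\chi({\cal O}_D)=0$, forcing ${\cal O}_D(K_X)\cong {\cal O}_D$ (in the non-classical case $K_X=0$ and this is vacuous). Finally (1)(c): $h^0({\cal O}_D)=1$ was shown above, and for ${\cal O}_D(D)$ I would apply Serre duality on the Gorenstein curve $D$, where $\omega_D\cong {\cal O}_D(K_X+D)$ by adjunction, to get $H^i({\cal O}_D(D))^\vee\cong H^{1-i}({\cal O}_D(K_X))\cong H^{1-i}({\cal O}_D)$ using (b); this yields $h^0({\cal O}_D(D))=h^1({\cal O}_D)=0$ and $h^1({\cal O}_D(D))=h^0({\cal O}_D)=1$.

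The step I expect to be the real obstacle is the converse in (2): extracting $H^1({\cal O}_D)=0$ from the purely numerical condition. This is where the geometry of $(-2)$-configurations genuinely enters, and the delicate point is verifying that the growth process never stalls and that each adjunction meets the current cycle in a single reduced point---this is exactly what pins the intersection number to $1$ and makes the connecting sheaves acyclic. Once this is set up, the remaining assertions follow formally from Riemann--Roch, the cyclicity of ${\cal O}_D$, and Serre duality.
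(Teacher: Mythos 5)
Your argument is essentially correct in its mathematical content, but it has one genuine gap relative to the statement as this paper uses it: your forward implication of (2) only treats decompositions of $D$ as a sum of subcurves, i.e.\ $D=C+C'$ \emph{as divisors}, because the exact sequence $0\to{\cal O}_{C'}(-C)\to{\cal O}_D\to{\cal O}_C\to 0$ presupposes that $C+C'$ is literally the scheme $D$. In this paper, however, ``$D=C+C'$'' means equality in $\NS(X)=\Pic(X)$, i.e.\ ${\cal O}_X(D)\cong{\cal O}_X(C+C')$ (see the convention stated just before Lemma \ref{lem:vanishing}), and it is this stronger form of (2) that is invoked later: in Proposition \ref{prop:h^1} the divisors $C=A-B-K_X$ and $C'=2B$ sum to $D$ only in $\NS(X)$, and similarly in Proposition \ref{prop:MO}. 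The paper closes exactly this gap by proving (1) first and then noting that nodality gives $\dim H^0({\cal O}_X(D))=1$, via $H^0({\cal O}_D(D))=0$ and the sequence $0\to{\cal O}_X\to{\cal O}_X(D)\to{\cal O}_D(D)\to 0$, so that a decomposition in $\NS(X)$ is automatically a decomposition of the divisor $D$ itself. You actually have all the ingredients --- your (1)(c) is proved independently of the $\NS$-form of (2) --- but you never take this step, so your (2) as written is strictly weaker than what is stated and needed. A second, smaller, defect is in the growth process: connectedness of $\Supp D$ alone does not produce a component $E$ of $D-Z_j$ with $(Z_j,E)>0$, since every component of $D-Z_j$ meeting $\Supp Z_j$ might already lie in $\Supp Z_j$ and meet $Z_j$ negatively; what does work is the numerical observation that $(Z_j,D-Z_j)\le 0$ would give $(D^2)\le (Z_j^2)+((D-Z_j)^2)\le -4$, contradicting $(D^2)=-2$.

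Apart from these points, your route differs from the paper's in an interesting way. For the hard direction of (2) (numerical condition $\Rightarrow$ nodal) the paper simply cites Artin \cite{A}, whereas you reprove it with the computation-sequence argument; once the step above is repaired, the induction with $(Z_j,E^{(j+1)})=1$ and the acyclicity of ${\cal O}_{{\Bbb P}^1}(-1)$ is correct and makes the note self-contained. For (1)(b) the paper argues by stability: ${\cal O}_D$ and ${\cal O}_D(K_X)$ are stable with the same reduced Hilbert polynomial, and $\chi({\cal O}_D,{\cal O}_D)=-(D^2)=2$ together with Serre duality forces $\Hom({\cal O}_D,{\cal O}_D(K_X))\ne 0$, hence an isomorphism; your twisted computation sequence reaches the same conclusion more explicitly, at the cost of having to check that the distinguished section is a non-zerodivisor on the possibly non-reduced $D$. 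Your proofs of (1)(a) and (1)(c) coincide with the paper's (quotients of ${\cal O}_D$ are ${\cal O}_C$, plus adjunction $\omega_D\cong{\cal O}_D(D+K_X)$ and Serre duality on $D$).
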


\begin{proof}
(1)
By $\chi({\cal O}_D)=-(D^2)/2=1$,
$\dim H^0({\cal O}_D)=1$.  
For any quotient $\phi:{\cal O}_D \to E$ with $\dim E=1$,
there is a divisor $C$ with $E={\cal O}_C$.
Hence $\chi({\cal O}_C)=\dim H^0({\cal O}_C) \geq 1$.
Then $\chi({\cal O}_D)/(D,H)<\chi({\cal O}_C)/(C,H)$
if $\phi$ is not isomorphic.
Therefore ${\cal O}_D$ is stable.
Since $K_X$ is numerically trivial,
${\cal O}_D(K_X)$ is also stable. 
Since $\chi({\cal O}_D,{\cal O}_D)=-(D^2)=2$,
$\Hom({\cal O}_D,{\cal O}_D(K_X)) \ne 0$,
which implies ${\cal O}_D \cong {\cal O}_D(K_X)$.
Since ${\cal O}_D(K_D)={\cal O}_D(D)$,
we also have the remaining claim.

(2)
Assume that $D$ is nodal. 
By the exact sequence
\begin{equation}\label{eq:O_D}
0 \to {\cal O}_X \to {\cal O}_X(D) \to {\cal O}_D(D) \to 0,
\end{equation}
we have $\dim H^0({\cal O}_X(D))=1$.
Hence $D=C+C'$ in $\NS(X)$ implies
$D=C+C'$ as a divisor.
Thus ${\cal O}_C$ is a quotient of ${\cal O}_D$,
which implies $H^1({\cal O}_C)=0$ and $(C^2)=-2\chi({\cal O}_C) \leq -2$.

Conversely if $(C^2)<0$ for all decomposition 
$D=C+C'$ by effective divisors, then \cite[Thm.1.7]{A}
implies $D$ is nodal.
\begin{NB}
Since $(D^2)<0$, there is a smooth rational curve $C$
with $(D,C)<0$. Then $D-C=0$ or 
$D-C$ is effective and
$0 > ((D-C)^2)=(D^2)-2(D,C)+(C^2)=(D^2)+2(-(D,C)-1) \geq (D^2)$.
Hence $(D,C)=-1$ and $((D-C)^2)=-2$.
Since $D-C$ also satisfies the assumption,
inductively we have a sequence of effective divisors
$D_0=D,D_1,D_2,...,D_s$ such that
$C_i:=D_i-D_{i+1}$ are smooth rational curves, where $D_{s+1}=0$.
By the exact sequence
$$
0 \to {\cal O}_{C_i}(-1) \to {\cal O}_{D_i} \to {\cal O}_{D_{i+1}} \to 0,
$$
we have $H^1( {\cal O}_{D_i} ) \cong H^1( {\cal O}_{D_{i+1}})$.
Hence $H^1({\cal O}_D)=0$. 

Conversely if there is an effective decomposition $D=C+C'$ 
with
$(C^2) \geq 0$, then
$\chi({\cal O}_C)=-(C^2)/2 \leq 0$.
Hence $H^1({\cal O}_C) \ne 0$.
Since $H^1({\cal O}_D) \to H^1({\cal O}_C)$ is surjective,
$H^1({\cal O}_D) \ne 0$. 
\end{NB}
\begin{NB}
We note that the arithmetic genus $p(C)$ is
$(C^2)/2+1$ for an effective divisor $C$ on $X$.
Thus $p(C) \leq 0$ if and only if $(C^2)<0$. 
\end{NB}
\end{proof}

\begin{NB}
We can also prove that ${\cal O}_D={\cal O}_D(K_X)$.
Indeed we have $\Ext^1({\cal O}_{D_i},{\cal O}_{C_i}(-1))=0$
by
$0=H^0({\cal O}_{C_i}(-1+(D_i,C_i))) \to 
\Ext^1({\cal O}_{D_i},{\cal O}_{C_i}(-1))
\to \Ext^1({\cal O}_X,{\cal O}_{C_i}(-1))=0$.....

\begin{rem}
${\cal O}_{D_i}$ are stable with $\chi({\cal O}_{D_i})=1$
by $H^1({\cal O}_{D_i})=0$.
Since $\chi({\cal O}_{D_i},{\cal O}_{D_i})=-(D_i^2)=2$,
$\Hom({\cal O}_{D_i},{\cal O}_{D_i}(K_X)) \ne 0$, which implies
${\cal O}_{D_i} \cong {\cal O}_{D_i}(K_X)$.
\end{rem}
\end{NB}

The following result is well-known.
\begin{lem}\label{lem:spherical}
Let $E$ be a stable sheaf of $\langle v(E)^2 \rangle=-2$.
Then $E \cong E(K_X)$.
If $E$ is torsion free, then it is locally free. 
\end{lem}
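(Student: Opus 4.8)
The plan is to run everything through Riemann--Roch in the form $\chi(E,F)=-\langle v(E),v(F)\rangle$ and Serre duality $\Ext^2(E,F)\cong\Hom(F,E(K_X))^{\vee}$, in the same spirit as the proof of Lemma~\ref{lem:nodal}. Since $E$ is stable over the algebraically closed field $k$ it is simple, so $\Hom(E,E)=k$, while $\chi(E,E)=-\langle v(E)^2\rangle=2$. Feeding $\Ext^2(E,E)\cong\Hom(E,E(K_X))^{\vee}$ into $\chi(E,E)=\dim\Hom(E,E)-\dim\Ext^1(E,E)+\dim\Ext^2(E,E)$ yields
\[
\dim\Hom(E,E(K_X))=1+\dim\Ext^1(E,E)\ge 1,
\]
so there is a nonzero $\phi\colon E\to E(K_X)$. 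As $K_X$ is numerically trivial, a routine Chern class computation shows $E$ and $E(K_X)$ have the same Hilbert polynomial with respect to $H$; hence $E(K_X)$ is stable of the same reduced Hilbert polynomial as $E$, and the standard fact that a nonzero map between stable sheaves of equal reduced Hilbert polynomial is an isomorphism gives $E\cong E(K_X)$. (If $X$ is non-classical then $K_X=0$ and this step is vacuous.)

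For the second assertion I would argue by double dual. Put $G:=E^{\vee\vee}$; on the smooth surface $X$ this reflexive sheaf is locally free, and there is an exact sequence
\[
0\to E\to G\to T\to 0
\]
with $T$ a $0$-dimensional sheaf, say of length $\ell\ge 0$, and the goal is to force $\ell=0$. From $v(G)=v(E)+(0,0,\ell)$ one computes $\langle v(G)^2\rangle=\langle v(E)^2\rangle-2(\rk E)\ell=-2-2(\rk E)\ell$, so $\chi(G,G)=2+2(\rk E)\ell$. Now ${\cal H}om(E,E)$ and ${\cal H}om(G,G)$ agree outside a $0$-dimensional locus and are both reflexive, so $\End(G)\cong\End(E)=k$ and $G$ is again simple; moreover $G\cong(E(K_X))^{\vee\vee}=G(K_X)$ by the first part. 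Exactly as before this gives $\dim\Ext^2(G,G)=\dim\Hom(G,G(K_X))=\dim\End(G)=1$, whence
\[
2+2(\rk E)\ell=\chi(G,G)=1-\dim\Ext^1(G,G)+1=2-\dim\Ext^1(G,G).
\]
Thus $\dim\Ext^1(G,G)=-2(\rk E)\ell$, which is nonnegative only when $\ell=0$; therefore $T=0$ and $E=G$ is locally free.

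The hard part is the local freeness, and in particular seeing what \emph{not} to do: the tempting move is to apply the bound $\langle v^2\rangle\ge -2$ to $G$, but $G=E^{\vee\vee}$ need not be Gieseker stable even when $E$ is (a subsheaf of small rank can absorb most of the torsion of $T$ and destabilize $G$), so that bound is unavailable. The point I would stress is that one can sidestep stability of $G$ entirely: $G$ remains simple and satisfies $G\cong G(K_X)$, and these two facts alone pin $\dim\Ext^2(G,G)$ to $1$ and thereby turn the excess negativity $\langle v(G)^2\rangle=-2-2(\rk E)\ell$ into the impossible inequality $\dim\Ext^1(G,G)<0$. The remaining ingredients---the identity $\chi(E,F)=-\langle v(E),v(F)\rangle$ in characteristic $2$ and the invariance of the Hilbert polynomial under the numerically trivial twist by $K_X$---are routine and I would simply record them.
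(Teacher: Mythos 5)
Your proof of the first assertion is correct and coincides with the paper's own argument: $\chi(E,E)=2$ plus simplicity and Serre duality give a nonzero map $E\to E(K_X)$, which is an isomorphism because source and target are stable with the same reduced Hilbert polynomial. For the second assertion the paper goes a different, shorter way: once $E\cong E(K_X)$ is known, Serre duality gives $\Ext^2(E,E)\cong\Hom(E,E)^{\vee}=k$, so $\chi(E,E)=2$ forces $\Ext^1(E,E)=0$, and one then invokes the standard fact that a rigid torsion-free sheaf on a smooth projective surface is locally free.

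Your alternative route breaks down exactly at the step you flag as the one that makes it work, namely $\End(G)\cong\End(E)=k$ for $G=E^{\vee\vee}$. The justification offered (that ${\cal H}om(E,E)$ and ${\cal H}om(G,G)$ are both reflexive and agree in codimension two) is false: ${\cal H}om(E,E)$ need not be reflexive. Locally, take $E={\frak m}_x\oplus{\cal O}\subset{\cal O}^{\oplus 2}=G$; the endomorphism $(a,b)\mapsto(b,0)$ of $G$ does not preserve $E$, so the canonical inclusion ${\cal H}om(E,E)\hookrightarrow{\cal H}om(G,G)$ (every local endomorphism of $E$ extends to $G$ by double-dualizing) is proper, and a proper subsheaf of a locally free sheaf agreeing with it outside a point cannot be reflexive. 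Worse, the conclusion itself fails in the generality you need: the lemma assumes only Gieseker stability, and Gieseker-stable sheaves that are not $\mu$-stable can have non-simple double duals. Concretely, take $0\neq L\in\NS(X)$ with $(L,H)=0$, so $(L^2)=-2m<0$ and $\pm L$ are not effective; let $G={\cal O}_X\oplus{\cal O}_X(L)$ and let $E$ be the kernel of a map $G\to\bigoplus_{i=1}^{m+1}k_{x_i}$ whose restriction to ${\cal O}_X$ is surjective and whose restriction to ${\cal O}_X(L)$ is nonzero. A check of the slope-zero rank-one subsheaves (they all lie in ${\cal O}_X\cap E$ or ${\cal O}_X(L)\cap E$, of colengths $m+1$ and $\geq 1$) shows $E$ is stable, hence simple, while $\End(E^{\vee\vee})=k^{\oplus 2}$. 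So simplicity of the double dual fails for exactly the same reason stability does, and there is no "sidestep": without $\End(G)=k$ you cannot pin $\dim\Ext^2(G,G)$ to $1$, and the contradiction $\dim\Ext^1(G,G)=-2(\rk E)\ell<0$ never gets off the ground. The repair is the paper's route: deduce $\Ext^1(E,E)=0$ and then use rigidity; that last implication does require an argument (e.g.\ deforming the zero-dimensional quotient $E^{\vee\vee}\to T$ produces nontrivial first-order deformations of $E$), but it never passes through simplicity of $E^{\vee\vee}$.
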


\begin{NB}
\begin{proof}
Since $\chi(E,E)=2$ and $\Hom(E,E)=k$
implies there is a non-zero homomorphism
$\phi:E \to E(K_X)$. Since $K_X$ is numerically trivial,
stability of $E$ implies $\phi$ is isomorphic.
Then $\Ext^1(E,E)=0$.
If $E$ is torsion free, then 
$\Ext^1(E,E)=0$ implies $E$ is locally free.
\end{proof}
\end{NB}

\begin{prop}\label{prop:spherical2}
Let $E$ be a torsion free sheaf of rank 2 such that
$E$ is simple and $\langle v(E)^2 \rangle=-2$.
Then it is $\mu$-stable with respect to any polarization.  
\end{prop}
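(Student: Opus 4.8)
The plan is to fix an arbitrary ample divisor $H$ and argue by contradiction: assume $E$ is torsion free of rank $2$, simple, with $\langle v(E)^2\rangle=-2$, but fails to be $\mu_H$-stable, where $\mu_H(F)=(c_1(F),H)/\rk F$. Since $\rk E=2$, any destabilizing subsheaf has rank $1$; replacing it by its saturation (a saturated rank-one subsheaf of a torsion free sheaf on a smooth surface is reflexive, hence a line bundle) I may assume there is an exact sequence ${0\to {\cal O}_X(A)\to E\to I_Z(B)\to 0}$ with $Z$ zero-dimensional, $A+B=c_1(E)$, and $(A,H)\ge\tfrac12(c_1(E),H)$. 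Writing $D:=A-B$, the destabilizing inequality is $(D,H)\ge 0$. A Riemann--Roch computation on $X$, using $\chi({\cal O}_X)=1$ and $(c_1(E),K_X)=0$ (as $K_X$ is numerically trivial), gives $\langle v(E)^2\rangle=4c_2(E)-(c_1(E)^2)-4$, so the hypothesis reads $4c_2(E)-(c_1(E)^2)=2$. Substituting $c_2(E)=(A,B)+\deg Z$ and $c_1(E)=A+B$, this collapses to the single identity $(D^2)=4\deg Z-2$.

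I would then split into cases on the sign of $(D,H)$. If $(D,H)=0$, note $D$ is not numerically trivial since $(D^2)=4\deg Z-2\ne 0$, so the Hodge index theorem forces $(D^2)<0$, whence $\deg Z=0$ and $(D^2)=-2$. In this case neither $D$ nor $K_X-D$ is effective (an effective class meets $H$ non-negatively, with equality only for the zero class, and $K_X-D=0$ would give $(D^2)=(K_X^2)=0$), so $H^0({\cal O}_X(D))=0$ and $H^2({\cal O}_X(D))=H^0({\cal O}_X(K_X-D))=0$; as $\chi({\cal O}_X(D))=1+(D^2)/2=0$, this yields $H^1({\cal O}_X(D))=0$. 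Hence $\Ext^1({\cal O}_X(B),{\cal O}_X(A))=0$, the sequence splits, and $E\cong{\cal O}_X(A)\oplus{\cal O}_X(B)$ is not simple, a contradiction.

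If instead $(D,H)>0$, the key observation is that $(K_X-D,H)=-(D,H)<0$, so $K_X-D$ is never effective and $H^2({\cal O}_X(D))=0$ unconditionally. When $\deg Z\ge 1$ we have $(D^2)\ge 2$, so $\chi({\cal O}_X(D))=1+(D^2)/2\ge 2$ and $D$ is effective; when $\deg Z=0$ we have $(D^2)=-2$, $\chi=0$, and $h^1=h^0$, so either $D$ is effective or the sequence splits (and $E$ is decomposable, hence not simple). Whenever $D$ is effective I would produce a non-scalar endomorphism: a nonzero section of ${\cal O}_X(D)$ gives a nonzero map $I_Z(B)\to{\cal O}_X(A)$ (such maps extend across the zero-dimensional $Z$, so $\Hom(I_Z(B),{\cal O}_X(A))\cong H^0({\cal O}_X(D))$), and the composite $E\twoheadrightarrow I_Z(B)\to{\cal O}_X(A)\hookrightarrow E$ is a nonzero endomorphism whose image has rank $\le 1$, so it is not a scalar. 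This contradicts $\End(E)=k$, completing all cases.

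\textbf{The main obstacle} is the case $(D,H)>0$ with $\deg Z=0$, i.e. a non-split extension ${0\to{\cal O}_X(A)\to E\to{\cal O}_X(B)\to 0}$ with $((A-B)^2)=-2$, which a priori could be simple and $\mu_H$-unstable and would break the statement. What rescues the proposition is precisely that $K_X$ is numerically trivial on an Enriques surface: this forces $(K_X-D,H)<0$, hence $H^2({\cal O}_X(D))=0$ and $h^1=h^0$, so that non-effectivity of $D$ upgrades to the vanishing of the extension group and the extension splits. I expect the careful part to be exactly this dichotomy (effective $D\Rightarrow$ non-scalar endomorphism; non-effective $D\Rightarrow$ split extension), together with checking that the saturation reduces everything to the line-bundle sub ${\cal O}_X(A)$ and the clean identity $(D^2)=4\deg Z-2$.
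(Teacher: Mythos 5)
Your overall strategy---a destabilizing sequence, the identity $(D^2)=4\deg Z-2$, and the dichotomy ``non-effective $D\Rightarrow$ splitting, effective $D\Rightarrow$ non-scalar endomorphism''---is sound in spirit and close to the paper's, but it rests on a false lemma: a saturated rank-one subsheaf of a torsion-free sheaf on a smooth surface need \emph{not} be a line bundle. For example, $I_Z$ is saturated in $I_Z\oplus{\cal O}_X$ (the quotient ${\cal O}_X$ is torsion free), yet it is not reflexive. The claim is valid only when the ambient sheaf is reflexive, i.e.\ (in rank $2$ on a smooth surface) locally free; but the proposition assumes only that $E$ is torsion free, and local freeness is not available at this stage---the paper obtains it (Lemma \ref{lem:spherical}) only for \emph{stable} sheaves, i.e.\ after stability is known. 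So you must allow a destabilizing sequence $0\to I_{Z'}(A)\to E\to I_Z(B)\to 0$. Then your numerical identity becomes $(D^2)=4(\deg Z+\deg Z')-2$, and, more seriously, the centerpiece of your case $(D,H)>0$ breaks: a nonzero section of ${\cal O}_X(D)$ gives a map $I_Z(B)\to{\cal O}_X(A)$, but this map need not land in the subsheaf $I_{Z'}(A)\subset E$ (it would have to vanish appropriately along $Z'$), so it induces no endomorphism of $E$. Indeed $\Hom(I_Z(B),I_{Z'}(A))$ can vanish even when $D$ is effective: take $Z=\emptyset$, $\dim H^0({\cal O}_X(D))=1$ and $Z'$ a point off the unique member of $|D|$. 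Your case $(D,H)=0$ does survive, since there the Hodge index theorem forces $\deg Z=\deg Z'=0$ and both pieces are line bundles.

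The paper's proof is arranged precisely so that this issue never arises: it takes $E_1,E_2$ to be \emph{arbitrary} torsion-free rank-one sheaves and argues numerically. Simpleness gives $\Hom(E_2,E_1)=0$ and non-splitness; $\Ext^2(E_2,E_1)=\Hom(E_1,E_2(K_X))^{\vee}=0$, because $c_1(E_2)-c_1(E_1)+K_X$ is nonzero (as $4\nmid\langle v(E)^2\rangle$ and $K_X$ is numerically trivial) and pairs non-positively with $H$, hence cannot be effective; therefore $\Ext^1(E_2,E_1)\neq 0$ forces $\langle v(E_1),v(E_2)\rangle>0$, and since $\langle v(E_i)^2\rangle\geq -1$ for \emph{any} torsion-free rank-one sheaf (e.g.\ $\langle v(I_{Z'}(A))^2\rangle=2\deg Z'-1$), one gets $-2=\langle v(E)^2\rangle\geq -1-1+2=0$, a contradiction. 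If you want to keep your cohomological formulation, the repair of your problematic case is this same computation in coordinates: simpleness plus $\Ext^2(E_2,E_1)=0$ give $\chi(E_2,E_1)=-\dim\Ext^1(E_2,E_1)\leq 0$, while Riemann--Roch gives $\chi(E_2,E_1)=\deg Z+\deg Z'\geq 0$; hence $Z=Z'=\emptyset$ and $\Ext^1(E_2,E_1)=0$, so the sequence splits, contradicting simpleness. Either way, the endomorphism construction has to be replaced; as written, your proof does not cover a destabilizing subsheaf that fails to be a line bundle, which is exactly the situation a non-locally-free $E$ could present.
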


\begin{proof}
Let $H$ be an ample divisor.
Assume that there is an exact sequence
\begin{equation}\label{eq:unstable}
0 \to E_1 \to E \to E_2 \to 0
\end{equation}
such that $E_1$ and $E_2$ are torsion free sheaves of rank 1 and
$(c_1(E_1)-c_1(E_2),H) \geq 0$.
We note that $c_1(E_2)-c_1(E_1)$ is not numerical trivial,
by $4 \nmid \langle v(E)^2 \rangle$.
Since $E$ is simple,
$\Hom(E_2,E_1)=0$.
If $\Hom(E_1,E_2(K_X))^{\vee}=\Ext^2(E_2,E_1) \ne 0$, then
$c_1(E_2)-c_1(E_1)+K_X$ is effective, since $c_1(E_2)-c_1(E_1)+K_X \ne 0$. 
On the other hand, $(c_1(E_1)-c_1(E_2),H) \geq 0$ implies
$c_1(E_2)-c_1(E_1)+K_X$ is not effective. Hence
we get $\Ext^2(E_2,E_1)= 0$.
By the simpleness of $E$,
\eqref{eq:unstable} does not split, and hence
$\Ext^1(E_2,E_1) \ne 0$.
Therefore $\langle v(E_1),v(E_2) \rangle >0$.
Since
$\langle v(E_1)^2 \rangle,\langle v(E_2)^2 \rangle \geq -1$,
we see that
$$
-2=\langle v(E)^2 \rangle=
\langle v(E_1)^2 \rangle+\langle v(E_2)^2 \rangle
+2\langle v(E_1),v(E_2) \rangle >-2,
$$
which is a contradiction.
Therefore $E$ is $\mu$-stable with respect to $H$.
\end{proof}


\section{Proof of Theorem \ref{thm:exceptional}}

\begin{prop}\label{prop:h^1}
For a nodal cycle $D$,  we take a non-trivial extension
\begin{equation}\label{eq:D}
0 \to {\cal O}_X(K_X) \overset{\phi}{\to} E \to {\cal O}_X(D) \to 0.
\end{equation}
\begin{enumerate}
\item[(1)]
$\dim H^0(E)=\dim H^0(E(K_X))=1$.
\item[(2)]
$E$ is $\mu$-stable.
\end{enumerate}
\end{prop}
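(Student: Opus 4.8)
The plan is to begin with the numerical setup. Since $\chi({\cal O}_X(K_X))=1$ and $\chi({\cal O}_X(D))=0$ (using $(D^2)=-2$ and that $K_X$ is numerically trivial), the Mukai vector of the middle term of \eqref{eq:D} is $v(E)=(2,D+K_X,0)$, so that $\langle v(E)^2\rangle=((D+K_X)^2)=-2$. As an extension of two torsion free sheaves of rank one, $E$ is torsion free of rank $2$. By Proposition \ref{prop:spherical2} it therefore suffices, for the stability assertion (2), to prove that $E$ is \emph{simple}; I would then deduce the cohomological assertion (1) from (2).

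For simplicity I would apply $\Hom(E,-)$ to \eqref{eq:D} and control $\Hom(E,E)$ through the two rank one pieces. First, applying $\Hom(-,{\cal O}_X(K_X))$ to \eqref{eq:D} gives $\Hom(E,{\cal O}_X(K_X))=0$: the subsheaf contribution $\Hom({\cal O}_X(D),{\cal O}_X(K_X))=H^0({\cal O}_X(K_X-D))$ vanishes since $K_X-D$ has negative degree against an ample class, while the connecting map $k=\Hom({\cal O}_X(K_X),{\cal O}_X(K_X))\to\Ext^1({\cal O}_X(D),{\cal O}_X(K_X))$ sends $\id$ to the nonzero class $e$ of \eqref{eq:D} and is injective. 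Hence $\Hom(E,E)\hookrightarrow\Hom(E,{\cal O}_X(D))$, and it remains to show $\Hom(E,{\cal O}_X(D))=k$. Applying $\Hom(-,{\cal O}_X(D))$ to \eqref{eq:D}, the relevant term is $\Hom({\cal O}_X(K_X),{\cal O}_X(D))=H^0({\cal O}_X(D-K_X))$: in the classical case this is $0$, and in the non-classical case ($K_X=0$) it is $k$ but the connecting map to $\Ext^1({\cal O}_X(D),{\cal O}_X(D))$, which is cupping with $e$, is injective. Together with $\Hom({\cal O}_X(D),{\cal O}_X(D))=k$ this forces $\Hom(E,{\cal O}_X(D))=k$. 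Since $\id_E$ maps to the nonzero quotient $E\to{\cal O}_X(D)$, I conclude $\Hom(E,E)=k$, and Proposition \ref{prop:spherical2} gives the $\mu$-stability in (2).

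For (1), stability together with $\langle v(E)^2\rangle=-2$ yields $E\cong E(K_X)$ by Lemma \ref{lem:spherical}, so $\dim H^0(E)=\dim H^0(E(K_X))$ and it is enough to compute $\dim H^0(E)=1$. I would read this off the cohomology sequence of \eqref{eq:D}, using $\dim H^0({\cal O}_X(D))=1$, which follows from $0\to{\cal O}_X\to{\cal O}_X(D)\to{\cal O}_D(D)\to0$ and $H^0({\cal O}_D(D))=0$ in Lemma \ref{lem:nodal}. In the classical case $H^0({\cal O}_X(K_X))=H^1({\cal O}_X(K_X))=0$, so $H^0(E)\cong H^0({\cal O}_X(D))$; in the non-classical case $K_X=0$ and the connecting map $H^0({\cal O}_X(D))\to H^1({\cal O}_X)$ is again cupping with $e$, which is injective, so $H^0(E)\cong H^0({\cal O}_X)$. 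Either way $\dim H^0(E)=1$.

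The main obstacle is the non-classical case, where $K_X=0$ makes $\Hom({\cal O}_X(K_X),{\cal O}_X(D))$, $H^0({\cal O}_X(K_X))$ and $H^1({\cal O}_X(K_X))$ all nonzero (they vanish when $K_X\neq0$), so neither simplicity nor the count $\dim H^0(E)=1$ is automatic. Everything reduces to one injectivity statement: cupping with $e$, equivalently the map $H^1({\cal O}_X(-D))\to H^1({\cal O}_X)$ induced by the tautological section $s_D$, is injective. I would establish this from the restriction sequence $0\to{\cal O}_X(-D)\xrightarrow{s_D}{\cal O}_X\to{\cal O}_D\to0$: nodality of $D$ gives $H^0({\cal O}_X)\xrightarrow{\sim}H^0({\cal O}_D)=k$ and $H^1({\cal O}_D)=0$ (Lemma \ref{lem:nodal}), which forces the connecting map $H^0({\cal O}_D)\to H^1({\cal O}_X(-D))$ to vanish and hence $H^1({\cal O}_X(-D))\to H^1({\cal O}_X)$ to be injective; since $e\neq0$, its images under these injective maps remain nonzero. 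This nodality input is precisely what makes the argument go through in characteristic $2$.
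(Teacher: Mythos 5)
Your proof is correct, and for part (2) it takes a genuinely different route from the paper's. The paper proves (1) first and then proves (2) by a direct destabilization argument (a modification of \cite[Thm. 3.4]{Kim1}): given $0 \to {\cal O}_X(A) \to E \to I_Z(B) \to 0$ with $(A,H) \geq (B,H)$, it shows $\deg Z = -(A,B) \geq 0$ and in each case manufactures an effective decomposition $D = C+C'$ with $(C^2) \geq 0$, contradicting Lemma \ref{lem:nodal}\,(2). You instead prove that $E$ is simple, via $\Hom(E,{\cal O}_X(K_X))=0$ and $\Hom(E,{\cal O}_X(D))=k$, invoke Proposition \ref{prop:spherical2} to get (2), and then recover (1) from stability using Lemma \ref{lem:spherical}. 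This is essentially the alternative anticipated in Remark \ref{rem:h^1}, except that the remark deduces simplicity from (1), whereas you obtain simplicity independently, which lets you reverse the logical order. Note that your central injectivity statement --- $H^1({\cal O}_X(-D)) \to H^1({\cal O}_X)$ is injective because $H^0({\cal O}_X) \to H^0({\cal O}_D)$ is surjective by nodality --- is the same mechanism as the paper's proof of (1): your restriction sequence is \eqref{eq:O_D} twisted by ${\cal O}_X(-D)$, and your injectivity is exactly the paper's isomorphism $\varphi:\Ext^1({\cal O}_X(D),{\cal O}_X(K_X)) \to \Ext^1({\cal O}_X,{\cal O}_X(K_X))$, so part (1) is essentially the paper's argument. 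What the paper's treatment of (2) buys is independence from the simplicity machinery and an explicit geometric picture (nodality forbids precisely the decompositions of $D$ that a destabilizing subsheaf would create); what yours buys is economy, since a single homological pivot drives simplicity, stability, and the section count in both the classical and non-classical cases.

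One assertion needs justification: in the classical case you claim $\Hom({\cal O}_X(K_X),{\cal O}_X(D)) = H^0({\cal O}_X(D-K_X))=0$ without proof. This is true but not automatic --- a priori $D+K_X$ (which equals $D-K_X$ in $\Pic(X)$) could be effective even though $h^0({\cal O}_X(D))=1$. It follows by twisting \eqref{eq:O_D} by $K_X$: since ${\cal O}_D(K_X) \cong {\cal O}_D$ and $H^0({\cal O}_D(D))=0$ by Lemma \ref{lem:nodal}\,(1), one gets $H^0({\cal O}_X(D+K_X)) \cong H^0({\cal O}_X(K_X)) = 0$. This is the same computation the paper uses to identify $H^0(E(K_X)) \cong H^0({\cal O}_X)$ in the classical case, so the fix is immediate, but it should be stated.
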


\begin{NB}
Since $H^2(E)=H^2(E(K_X))=0$, we also have
$H^1(E)=H^1(E(K_X))=0$.
\end{NB}

\begin{proof}
(1)
By the exact sequence \eqref{eq:O_D}, we have 
$\dim H^0({\cal O}_X(D))=1$ and $H^0({\cal O}_X(D+K_X))=H^0({\cal O}_X(K_X))$.
We also have $\dim H^1({\cal O}_X(D))=1$ by
$H^2({\cal O}_X(D))=0$ and $\chi({\cal O}_X(D))=0$.
If $K_X \ne 0$, then $H^0(E) \cong H^0({\cal O}_X(D))$ and
$H^0({\cal O}_X) \cong H^0(E(K_X))$ implies the claim.

Assume that $K_X=0$. Then
$\Ext^i({\cal O}_D(D),{\cal O}_X)=H^{i-1}({\cal O}_D)$ 
and we have an exact sequence 
\begin{equation}
\begin{CD}
0 @>>> \Hom({\cal O}_X(D),{\cal O}_X) @>>> 
H^0({\cal O}_X) @>>> H^0({\cal O}_D)\\
@>>> \Ext^1({\cal O}_X(D),{\cal O}_X) @>>> \Ext^1({\cal O}_X,{\cal O}_X)
@>>> H^1({\cal O}_D).
\end{CD}
\end{equation}
Hence $\varphi:\Ext^1({\cal O}_X(D),{\cal O}_X(K_X)) \to
 \Ext^1({\cal O}_X,{\cal O}_X(K_X))$ is an isomorphism.
By the commutative diagram
\begin{equation}
\begin{CD}
k \cong \Hom({\cal O}_X(D),{\cal O}_X(D)) @>{\delta_1}>> 
\Ext^1({\cal O}_X(D),{\cal O}_X(K_X))\\
@VVV @VV{\varphi}V\\
k \cong \Hom({\cal O}_X,{\cal O}_X(D)) @>{\delta_2}>> 
\Ext^1({\cal O}_X,{\cal O}_X(K_X)),
\end{CD}
\end{equation}
$\delta_2$ is an isomorphism,
where $\delta_1$ is the connecting homomorhism
which is non-trivial by the construction of $E$. 
Hence we have 
$\dim H^0(E)=1$ by $\dim H^0({\cal O}_X(K_X))=1$.

(2)
The following argument is a modification of
(iii) $\to$ (ii) of \cite[Thm. 3.4]{Kim1}.
Assume that there is an exact sequence 
\begin{equation}\label{eq:A-B}
0 \to {\cal O}_X(A) \to E \to I_Z(B) \to 0
\end{equation}
such that $(A,H) \geq (B,H)$.
Since $D=A+B-K_X$ is effective,
$(A+B,H)>0$, which implies $(A,H)>0$.
Since $\coker({\cal O}_X(K_X) \to E)$ is torsion free and
${\cal O}_X(A) \not \cong {\cal O}_X(K_X)$,
${\cal O}_X(K_X) \to E \to I_Z(B)$ is injective.
Thus $B-K_X$ is effective or $B=K_X$.
If $B=K_X$, then $Z=\emptyset$ and
the exact sequences \eqref{eq:D} and \eqref{eq:A-B} split.
So $B-K_X$ is effectieve.   
Since 
$$
1=\chi(E)=\frac{(A^2)}{2}+1+\frac{(B^2)}{2}+1-\deg Z
$$
 and 
$-2=((A+B)^2)$, $\deg Z=-(A,B) \geq 0$.
If $(A,B)<0$, then 
$$
((A-B)^2)=((A+B)^2)-4(A,B)=-2-4(A,B) \geq 2,
$$
which implies that $\chi({\cal O}_X(A-B-K_X))=((A-B)^2)/2+1>0$.
We set $C:=A-B-K_X$ and $C':=2B$.
By $(C,H) \geq 0$ and the ampleness of $H$,
$C$ is effective with $(C^2) \geq 0$.
Since $C'$ is effective,
we have a decomposition $D=C+C'$
which contradicts
with the assumption of $D$ by Lemma \ref{lem:nodal}.
Therefore $(A,B)=0$.

Then $\chi({\cal O}_X(A-B))=((A-B)^2)/2+1=0$.
\begin{NB}
If $B-A$ is effective, then $B-A \ne 0$ by $((B-A)^2) =-2$, and
$(B-A,H)>0$.
\end{NB}
We first prove that $A-K_X$ is effective.
If $H^0({\cal O}_X(A-B))=0$, then
$H^1({\cal O}_X(A-B))=0$, which implies that
$E \cong {\cal O}_X(A) \oplus {\cal O}_X(B)$
and \eqref{eq:D} is the exact sequence 
$$
0 \to {\cal O}_X(K_X) \overset{\phi}{\to} {\cal O}_X(A) \oplus {\cal O}_X(B) 
\to {\cal O}_X(D) \to 0.
$$
In particular $H^0({\cal O}_X(A-K_X)) \ne 0$ by
the effectivity of $B-K_X$ and the torsion freeness of $\coker \phi$.
Since $(A-K_X,H) \geq (B-K_X,H)>0$,
$A-K_X$ is also effective.
\begin{NB}
Another argument:
Hence $D-K_X=(A-K_X)+(B-K_X)$ is a decomposition by effective divisors.
Then as in the argument below, we get a contradiction. 
 On the other hand, $\dim H^0(E(K_X))=1$ implies
$H^0({\cal O}_X(A-K_X))=0$, which is a contradiction.
\end{NB}
If $H^0({\cal O}_X(A-B)) \ne 0$, then 
$A-K_X=(A-B)+(B-K_X)$ is effective.

Hence we have a decomposition
$D-K_X=A+B=(A-K_X)+(B-K_X)$ by effective divisors. 
Since $(A^2)+(B^2)=-2$, $(A^2) \geq 0$ or $(B^2) \geq 0$.
If $(A^2) \geq 0$, then $A$ is also effective.
Thus we have a decomposition 
$D=A+(B-K_X)$ by effective divisors $A$ and $B-K_X$, which
contradicts with 
Lemma \ref{lem:nodal}.
If $(B^2) \geq 0$, similarly we get a contradiction.
Therefore $E$ is $\mu$-stable.
\end{proof}

\begin{rem}\label{rem:h^1}
By using (1), we can directly show that $E$ is simple,
which gives another proof of (2) by Propisition \ref{prop:spherical2}.
\begin{NB}
Since $H^0({\cal O}_X(-D+K_X))=0$ and 
\eqref{eq:D} does not split,
$\Hom({\cal O}_X(D),E)=0$.
Then $\psi_{|{\cal O}_X(K_X)} \ne 0$ 
for a non-zero homomorphism $\psi:E \to E$.
By (1), $\psi$ induces a non-zero homomorphism of
${\cal O}_X(K_X)$. We set
$\Psi_{|{\cal O}_X(K_X)}=\lambda \id_{{\cal O}_X(K_X)}$,
$\lambda \in k^*$.
Then $\psi':=\psi-\lambda \id_{E}$
must be zero, since $\psi'_{|{\cal O}_X(K_X)}=0$.
\end{NB} 
\end{rem}

\begin{prop}[{cf. \cite[Thm. 3.3]{Kim1}}]\label{prop:MO}
Let $E$ be a $\mu$-semi-stable simple sheaf of rank 2 such that
$\langle v(E)^2 \rangle=-2$.
Assume that 
$H^0(E) \ne 0$ and $H^0(E(-C))=0$
for all effective divisor $C$.
Then 
$E$ fits in an exact sequence
\begin{equation}\label{eq:M0}
0 \to {\cal O}_X(K_X) \to E \to {\cal O}_X(D) \to 0
\end{equation}
such that
$D$ is a nodal cycle.
\end{prop}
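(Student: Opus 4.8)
The plan is to realize $E$ as a Serre-type extension attached to a nodal cycle. Throughout I use that, by Proposition \ref{prop:spherical2}, $E$ is $\mu$-stable, and by Lemma \ref{lem:spherical} it is locally free with $E\cong E(K_X)$; also, a nonzero section ${\cal O}_X\hookrightarrow E$ together with $\mu$-stability forces $(c_1(E),H)=(D,H)>0$. First I would produce a \emph{saturated} section. Since $H^0(E)\neq 0$ and $E\cong E(K_X)$, there is a nonzero $\phi\colon {\cal O}_X(K_X)\to E$; let ${\cal O}_X(A)$ be the saturation of its image, so $A-K_X$ is effective and $E/{\cal O}_X(A)$ is torsion free. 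The inclusion ${\cal O}_X(A)\hookrightarrow E$ gives $H^0(E(-A))\neq 0$, and writing $A=K_X+C$ with $C$ effective one has $H^0(E(-C))\cong H^0(E(-A))\neq 0$ (using $E(-K_X)\cong E$); by hypothesis $C=0$. Hence ${\cal O}_X(K_X)$ is saturated and
\[
0\to {\cal O}_X(K_X)\to E\to I_Z(D)\to 0
\]
with $Z$ a $0$-dimensional subscheme. A Riemann--Roch computation gives $\langle v(E)^2\rangle=-(D^2)-4+4\deg Z$, so $\langle v(E)^2\rangle=-2$ is exactly $(D^2)=4\deg Z-2$; in particular $Z=\emptyset$ is equivalent to $(D^2)=-2$.

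The main step, and the one I expect to be the real obstacle, is to show $Z=\emptyset$. Suppose $\deg Z\geq 1$, so $(D^2)\geq 2>0$; since $(D,H)>0$, Riemann--Roch shows $D$ is effective. By Lemma \ref{lem:phi} choose an effective isotropic $f$ with $0<(D,f)\leq \sqrt{(D^2)}$. One computes $\chi(E(-f))=\deg Z+1-(D,f)$, and since $(D,f)$ is an integer with $(D,f)\leq \sqrt{4\deg Z-2}$ one checks $(D,f)\leq \deg Z$, so $\chi(E(-f))\geq 1$. Using $E\cong E(K_X)$ and Serre duality, $H^2(E(-f))\cong H^0(E(-(D-f)))^{\vee}$, and the goal is to show this vanishes, so that $H^0(E(-f))\neq 0$ --- contradicting minimality because $f$ is effective and nonzero. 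The delicate part is precisely this vanishing: a nonzero section of $E(-(D-f))$ saturates to a sub-line-bundle ${\cal O}_X(P)\hookrightarrow E$ with $P\geq D-f$, and $\mu$-stability gives $(P,H)<(D,H)/2$, forcing $(f,H)>(D,H)/2$. Thus $H^2(E(-f))\neq 0$ can only happen for isotropic $f$ of large $H$-degree, and one must arrange (after possibly replacing $f$ by another isotropic class adapted to $H$, e.g.\ from the second ruling) that $(f,H)\leq (D,H)/2$. The complication here is that on an Enriques surface the genus-one class $f$ does not move in its own linear system, so one cannot simply pass to a general elliptic fibre to force $E|_f$ semistable; this is the crux of the argument. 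Once the vanishing is secured, the contradiction gives $Z=\emptyset$, hence
\[
0\to {\cal O}_X(K_X)\to E\to {\cal O}_X(D)\to 0,\qquad (D^2)=-2.
\]

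Next I would show $D$ is effective. The extension is non-split, for otherwise $E\cong {\cal O}_X(K_X)\oplus{\cal O}_X(D)$ would fail to be simple; hence $\Ext^1({\cal O}_X(D),{\cal O}_X(K_X))=H^1({\cal O}_X(K_X-D))\neq 0$. Since $(K_X-D)^2=(D^2)=-2$ and $(K_X-D,H)=-(D,H)<0$, Riemann--Roch gives $\chi({\cal O}_X(K_X-D))=0$ and $H^0({\cal O}_X(K_X-D))=0$, while Serre duality gives $H^2({\cal O}_X(K_X-D))\cong H^0({\cal O}_X(D))^{\vee}$. Therefore $\dim H^1({\cal O}_X(K_X-D))=h^0({\cal O}_X(D))>0$, so $D$ is effective.

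Finally, by Lemma \ref{lem:nodal}(2) it suffices to rule out an effective decomposition $D=C+C'$ with $C,C'$ nonzero and $(C^2)\geq 0$. Here $H^0(E(-C))=H^0(E(-C'))=0$; twisting the sequence above by $-C$ gives $0\to {\cal O}_X(K_X-C)\to E(-C)\to {\cal O}_X(C')\to 0$, and since $H^0({\cal O}_X(K_X-C))=0=H^0(E(-C))$ the connecting map $H^0({\cal O}_X(C'))\hookrightarrow H^1({\cal O}_X(K_X-C))$ is injective, yielding $h^0({\cal O}_X(C'))\leq h^0({\cal O}_X(C))-1-(C^2)/2$, and symmetrically with $C,C'$ interchanged. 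These inequalities, combined with the constraint $\langle v(\cdot)^2\rangle\geq -2$ for simple sheaves applied to the rank-two subsheaves pulled back along ${\cal O}_X(C),{\cal O}_X(C')\hookrightarrow {\cal O}_X(D)$ (in the spirit of the analysis in Proposition \ref{prop:h^1}(2)), are designed to produce a contradiction, forcing $(C^2)<0$. Hence every effective sub-divisor of $D$ has negative square, so $D$ is a nodal cycle, completing the proof.
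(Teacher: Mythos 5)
Your write-up follows the same skeleton as the paper (saturate the section to get $0\to{\cal O}_X(K_X)\to E\to I_Z(D)\to 0$, prove $\deg Z=0$, then $D$ effective, then $D$ nodal), but the two steps you yourself flag as the crux are left genuinely open, and the fixes you sketch do not work. For $Z=\emptyset$: you correctly reduce to the vanishing $H^2(E(-f))^{\vee}\cong H^0(E(-(D-f)))=0$, but you then try to force it with the slope bound from $\mu$-stability, which only tells you that a destabilizing section would need $(f,H)>(D,H)/2$, and so you are led to look for an isotropic $f$ satisfying both $0<(D,f)\le\sqrt{(D^2)}$ and $(f,H)\le (D,H)/2$ --- a condition you cannot arrange, since Lemma \ref{lem:phi} gives no control whatsoever on $(f,H)$. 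The missing observation is elementary and is exactly how the paper proceeds: from $(D,f)\le\sqrt{(D^2)}$ one gets $((D-f)^2)=(D^2)-2(D,f)\ge \sqrt{(D^2)}\,(\sqrt{(D^2)}-2)\ge 0$ (when $(D^2)=2$ use integrality, so $(D,f)=1$), and $(D-xf,H)>0$ for $0\le x\le 1$ because $D\notin{\Bbb Q}f$; hence Riemann--Roch makes $D-f$ effective and nonzero. Then your hypothesis $H^0(E(-C))=0$ for all effective $C$ applies directly with $C=D-f$ (equivalently, in the paper's formulation: a nonzero map $E\to{\cal O}_X(K_X+f)$ has kernel ${\cal O}_X(D-f+C')$ with $C'\ge 0$, producing a section of $E(-(D-f+C'))$), and this is precisely the vanishing you needed. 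No stability estimate is involved; effectivity of $D-f$ plus the hypothesis does the whole job.

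The nodality step has the same kind of gap. Your two inequalities $h^0(C')\le h^0(C)-1-(C^2)/2$ and $h^0(C)\le h^0(C')-1-(C'^2)/2$, added together, give only $(C^2)+(C'^2)\le -4$, i.e.\ $(C,C')\ge 1$; this is numerically consistent with, say, $(C^2)=0$, $(C'^2)=-4$, $(C,C')=1$, $h^0(C)=2$, $h^0(C')=1$, so no contradiction follows from them alone. The proposed rescue via $\langle v^2\rangle\ge -2$ for simple sheaves, applied to the pullback $0\to{\cal O}_X(K_X)\to E''\to{\cal O}_X(C)\to 0$, requires $E''$ to be simple, and nothing you have implies this: subsheaves of stable sheaves need not be simple, and in fact under your assumption $(C^2)\ge 0$ one computes $\langle v(E'')^2\rangle=-(C^2)-4<-2$, so $E''$ is certainly \emph{not} simple --- the route is circular, since establishing simplicity of $E''$ is equivalent to the statement you are trying to prove. (The related Proposition \ref{prop:h^1}(2) proves stability of such extensions only by \emph{using} nodality of $D$.) The paper instead closes this step with the vanishing theorem, Lemma \ref{lem:vanishing}: write $C=C_1+R$ with $C_1$ nef, $(C_1^2)=(C^2)$, and $R$ a sum of $(-2)$-curves; if $(C_1^2)>0$, then $H^1({\cal O}_X(K_X-C_1))=0$, so the section of ${\cal O}_X(D-C_1)$ (which is effective, being $C'+R$) lifts to a section of $E(-C_1)$, contradicting your hypothesis; if $(C_1^2)=0$, replace $C_1$ by a primitive isotropic effective summand $C_2$ and invoke Lemma \ref{lem:vanishing}(2). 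Both of your gaps are thus filled by effectivity-plus-vanishing arguments rather than by the stability or simplicity estimates you propose.
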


\begin{proof}
We note that $E$ is a $\mu$-stable locally free sheaf
with respect to any ample divisor $H$
such that $E(K_X) \cong E$ by Lemma \ref{lem:spherical} and
Proposition \ref{prop:spherical2}.
We have an exact sequence
\begin{equation}\label{eq:M}
0 \to {\cal O}_X(K_X) \to E \to I_Z(D) \to 0
\end{equation}
where $D=c_1(E)-K_X$ and
$4\deg Z-(D^2)=2$.
We note that $(D^2) \geq -2$.
By the stability of $E$, we also have $(D,H)>0$, which shows
that $H^2(I_Z(D))=0$.

We first prove that $(D^2)=-2$ and $\deg Z=0$.
Assume that $(D^2) \geq 0$, that is,
$\deg Z>0$ and $(D^2) \geq 2$.
Then $(D,H)>0$ implies that $D$ is effective.
By Lemma \ref{lem:phi},
there is an effective divisor $f$ such that
$(f^2)=0$ and $0<(D,f) \leq \sqrt{(D^2)}$.
\begin{NB}We can choose $(D,f)>0$.\end{NB}
Then for  $0 \leq x \leq 1$,
\begin{equation}
((D-xf)^2)=(D^2)-2x(D,f) \geq (D^2)-2(D,f)\geq 
\sqrt{(D^2)}(\sqrt{(D^2)}-2).
\end{equation}
If $(D^2) \geq 4$, then obviously $(D^2)-2(D,f) \geq 0$.
If $(D^2)=2$, then $(D,f) \leq 1$, which also implies
$(D^2)-2(D,f) \geq 0$.
Since $D \not \in  {\Bbb Q}f$, we see that $(D-xf,H)>0$.
Hence $D-f$ is an effective divisor.
We note that 
$$
\chi(E(-f))=\chi(E)-(D,f)=\deg Z+1-(D,f)
\geq \deg Z+1-\sqrt{4 \deg Z-2}.
$$
Since $(\deg Z+1)^2-(4 \deg Z-2)=(\deg Z-1)^2+2>0$,
we have $\chi(E(-f))>0$, which implies $H^0(E(-f)) \ne 0$ or
$\Hom(E,{\cal O}_X(K_X+f)) \ne 0$.
By our choice of $E$, we have $\Hom(E,{\cal O}_X(K_X+f)) \ne 0$.
Then there is an exact sequence
 $$
0 \to {\cal O}_X(D-f+C) \to E \to I_{Z'}(f+K_X-C) \to 0,
$$
where $C=0$ or $C$ is an effective divisor.
Since $D-f$ is effective, 
this case also does not occur.
Therefore we get $(D^2)<0$. 
Then $\deg Z=0$ and $(D^2)=-2$.
\begin{NB}
Old argument:
We claim that $D$ is nef.
Indeed if $(D,C)<0$ for an irreducible curve $C$,
then $(C^2)=-2$.
\begin{NB2}
By $1 \geq \chi({\cal O}_C)=-(C^2)/2$,
$(C^2)=-2$.
\end{NB2}
Since $C$ is a fixed component of $|D|$ and
$D \ne C$, we have $|D-C| \not= \emptyset$.
\begin{NB2}
If $D=C$, then $(D^2)=-2$.
\end{NB2}
\begin{NB2}
Another argument:
We claim that $(D-xC,H)>0$ for $0 \leq x \leq 1$.
Indeed if $(D-xC,H)=0$ for some $x \in [0,1]$,
then the Hodge index theorem implies that
$0 \geq ((D-xC)^2)=(D^2)-2x((D,C)+x) \geq (D^2) \geq 2$ by
$(D,C) \leq -1$.  

Since $((D-C)^2) \geq (D^2) \geq 2$ and $(D-C,H)>0$,
$D-C$ is effective. Thus 
$|D-C| \not= \emptyset$.
\end{NB2}
We have 
$$
\chi(I_Z(D-C))=\chi(I_Z(D))-(D,C)-1 \geq \chi(I_Z(D))
=\deg Z>0.
$$
\begin{NB2}
$((D-C)^2)/2=(D^2)-(D,C)-1$ and
$\chi(I_Z(D))=(D^2)/2+1-\deg Z=(2\deg Z-1)+1-\deg Z=\deg Z$.
\end{NB2}
Hence 
$$
\chi(E(-C))=\chi({\cal O}_X(K_X-C))+\chi(I_Z(D-C))=
\chi(I_Z(D-C))>0.
$$
Hence $H^0(E(-C)) \ne 0$ or
$H^2(E(-C)) \ne 0$.
By our choice of $E$,
we have $\Hom(E,{\cal O}_X(C+K_X))=
H^2(E(-C))^{\vee} \ne 0$.
Then we have an exact sequence
$$
0 \to {\cal O}_X(D-C+C') \to E \to I_{Z'}(C+K_X-C') \to 0,
$$
where $C'=0$ or $C'$ is an effective divisor.
Since $D-C$ is effective, 
this case also does not occur.
Therefore $D$ is nef.

Let $f$ be an effective divisor of $(f^2)=0$.
Then $f=f_0+R$, where $R$ is effective and
$f_0$ is an isotropic nef and effective divisor.
\begin{NB2}
$R$ consists of $(-2)$-curves.
\end{NB2}
Since $(D,f)=(D,f_0)+(D,R) \geq (D,f_0)$,
we find an isotropic nef and effective divisor $f$
such that $0<(D,f) \leq \sqrt{(D^2)}$
by \cite[Cor. 2.7.1]{CD}.
\begin{NB2}
If $(D,f)=0$, then 
$(D^2) \leq 0$.
Assume that $(D,f)>0$. $(f^2)=0$ implies $f$ or $-f$ is effective.
If $-f$ is effective, then $(D,-f) \geq 0$ by the nefness of $D$.
Hence $f$ is effective. 
\end{NB2}
\begin{NB2}
Even if $D$ is a non-nef effective divisor,
there is an effective divisor $f$ with $0<(D,f) \leq \sqrt{(D^2)}$.
Indeed, there is a Weyl group action $w$ with
$w(D)$ is a nef effective divisor.
Then there is an effective and isotropic divisor $f'$ such that
$0<(w(D),f') \leq \sqrt{(w(D)^2)}=\sqrt{(D^2)}$.
Since $f=w(f')$ is effective and $(w(D),f')=(D,w(f'))$,
we get the claim. 
\end{NB2}
We note that $H+nf$ is ample for $n \geq 0$.
Since $(D-2f,H+nf)=(D-2f,H)+n(D,f)>0$ for $n \gg 0$,
$H^2(E(-f))=0$ by the stability of $E$ with respect to
$H+nf$ (Proposition \ref{prop:spherical2}).
We note that 
$$
\chi(E(-f))=\chi(E)-(D,f)=\deg Z+1-(D,f)
\geq \deg Z+1-\sqrt{4 \deg Z-2}.
$$
Since $(\deg Z+1)^2-(4 \deg Z-2)=(\deg Z-1)^2+2>0$,
we have $\chi(E(-f))>0$, which implies $H^0(E(-f))>0$.
Therefore we get $(D^2)<0$. 
Then $\deg Z=0$ and $(D^2)=-2$.
\end{NB}

Since $E$ is stable, we have
$H^1({\cal O}_X(D))=H^1({\cal O}_X(K_X-D))^{\vee} \ne 0$.
Since $\chi({\cal O}_X(D))=0$ and $(D,H)>0$,
$D$ is an effective divisor.
Assume that $D$ has a decomposition
$D=C+C'$ such that $C$ and $C'$ are effective and
$(C^2) \geq 0$.
Then we have a decomposition $C=C_1+R$
such that $C_1$ is an effective, nef divisor with
$(C_1^2)=(C^2)$ and $R$ is an effective divisor consisting of
$(-2)$-curves.
If $(C_1^2)>0$, then Lemma \ref{lem:nodal} implies
$H^1({\cal O}_X(-C_1+K_X))=0$,
which implies $H^0(E(-C_1)) \ne 0$.
If $(C_1^2)=0$, then there is a decomposition
$C_1=C_2+C_3$ by effective divisors such that
$C_2$ is primitive in $\NS(X)$ and $(C_2^2)=0$.
In this case, we also have $H^1({\cal O}_X(-C_2+K_X))=0$ by
Lemma \ref{lem:nodal}, 
and hence $H^0(E(-C_2)) \ne 0$.
Therefore 
$D$ is a nodal cycle.
\end{proof}

\begin{rem}
There are many locally free unstable sheaves $E$ with
$\langle v(E)^2 \rangle=-2$.
Thus some conditions may be missing  in the classification of \cite[Thm. 3.3]{Kim1}.
\end{rem}

\begin{NB}
Let $D$ be a primitive and isotropic divisor which is 
nef and effective. Then $H^1({\cal O}_X(-D))=H^1({\cal O}_X(-D+K_X))=0$.

Since $(D,H)=(D+K_X,H)>0$ and $\chi({\cal O}_X(D+K_X))=\chi({\cal O}_X)=1$,
$D+K_X$ is also effective.
By \cite[Cor. 5.7.2]{CD}, $\dim H^0({\cal O}(D))=
\dim H^0({\cal O}_X(D+K_X))=1$.
Hence $H^1({\cal O}_X(D))=H^1({\cal O}_X(D+K_X))=0$.

Let $D$ be a nef divisor which is 
effective and $(D^2)>0$. Then $H^1({\cal O}_X(-D))=0$ by
\cite[Cor. 3.1.3]{CD}.

\begin{lem}[{\cite[Prop. 4.3.3]{CD}}]
Let $D$ be an irreducible effective dvisor with $(D^2)>0$.
Then $|D|$ is 1-connected. In particular, $h^0({\cal O}_C)=1$ for
$C \in |D|$.  
\end{lem}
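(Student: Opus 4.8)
The plan is to establish the two assertions separately: the numerical statement that $|D|$ is $1$-connected, which I would derive from the Hodge index theorem, and then $h^0({\cal O}_C)=1$, which I would obtain from the structure sequence of $C$ together with a vanishing theorem already available in the excerpt.

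First I would observe that $D$ is nef: for any irreducible curve $\Gamma$ one has $(D,\Gamma)\geq 0$, this being automatic when $\Gamma\neq D$ (distinct irreducible curves meet non-negatively) and equal to $(D^2)>0$ when $\Gamma=D$. To prove $1$-connectedness, fix $C\in|D|$ and an effective decomposition $C=C_1+C_2$ with $C_1,C_2>0$; I must show $(C_1,C_2)\geq 1$. Put $d:=(D^2)$, $a:=(C_1,D)$, $b:=(C_2,D)$; nefness of $D$ gives $a,b\geq 0$, and $C\equiv D$ gives $a+b=(C,D)=d>0$. Since $(D^2)>0$, the Hodge index theorem yields $(C_i^2)\leq (C_i,D)^2/d$, whence
\begin{equation*}
2(C_1,C_2)=(C^2)-(C_1^2)-(C_2^2)\geq d-\frac{a^2+b^2}{d}=\frac{2ab}{d}\geq 0.
\end{equation*}
If both $a,b>0$, this forces $(C_1,C_2)>0$, hence $(C_1,C_2)\geq 1$ as it is an integer. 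If instead $a=0$ (the case $b=0$ being symmetric), then $C_1$ lies in $D^{\perp}$, which is negative definite; as $C_1>0$ is not numerically trivial we get $(C_1^2)<0$, and then $(C_1,C_2)=(C_1,C)-(C_1^2)=(C_1,D)-(C_1^2)=-(C_1^2)>0$. In all cases $(C_1,C_2)\geq 1$, so $|D|$ is $1$-connected.

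For the second assertion I would avoid the classical connectedness argument and argue directly. For $C\in|D|$ we have ${\cal O}_X(-C)\cong {\cal O}_X(-D)$, and since $D$ is nef with $(D^2)>0$, Lemma \ref{lem:vanishing}(1) gives $H^1({\cal O}_X(-C))=H^1({\cal O}_X(-D))=0$. Feeding this into the structure sequence
\begin{equation*}
0\to {\cal O}_X(-C)\to {\cal O}_X\to {\cal O}_C\to 0,
\end{equation*}
together with $H^0({\cal O}_X(-C))=0$ and $H^0({\cal O}_X)=k$, yields $h^0({\cal O}_C)=1$.

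The numerical inequality is routine; the one point that needs care is the strictness of the $1$-connectedness bound in the degenerate case $ab=0$, where $2ab/d$ vanishes and one must instead invoke the negative definiteness of $D^{\perp}$ to conclude $(C_1,C_2)=-(C_1^2)>0$. I expect this borderline case, rather than the main inequality, to be the only genuine obstacle; the $h^0$ assertion is then immediate from Lemma \ref{lem:vanishing}(1) and does not require the full strength of $1$-connectedness.
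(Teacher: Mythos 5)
Your proof is correct, but there is nothing in the paper to compare it against: the lemma is stated with a citation to \cite[Prop.~4.3.3]{CD} and no proof is given (indeed it sits inside a commented-out remark block), so the paper's "approach" is simply to defer to Cossec--Dolgachev. Your argument is the standard self-contained one and it checks out: nefness of the irreducible $D$, the Hodge index bound $(C_i^2)\leq (C_i,D)^2/(D^2)$ applied to an effective decomposition $C=C_1+C_2$ to get $2(C_1,C_2)\geq 2ab/d$, and the borderline case $ab=0$ handled correctly via negative definiteness of $D^{\perp}$ (an effective nonzero $C_1$ meets an ample divisor positively, so its class survives in $\NS(X)$ modulo torsion and $(C_1^2)<0$); all of this is characteristic-free, so it covers the non-classical surfaces the paper is concerned with. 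The genuinely different — and arguably cleaner — step is your treatment of $h^0({\cal O}_C)=1$: rather than invoking the classical fact that a $1$-connected curve has $h^0=1$, you read it off from Lemma \ref{lem:vanishing}(1) and the structure sequence, which decouples the two assertions of the lemma entirely. What the citation buys the paper is brevity; what your argument buys is independence from \cite[Prop.~4.3.3]{CD} — with the caveat that this independence is only as good as Lemma \ref{lem:vanishing}(1) itself, which the paper in turn takes from \cite[Cor.~3.1.3]{CD}, so the vanishing theorem, not the connectedness theorem, becomes the load-bearing external input.
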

\end{NB}

\begin{NB}
\begin{lem}
Let $E$ be a torsion free sheaf of rank 2 such that
$\Hom(E,E)=\Ext^2(E,E)=k$, $\Ext^1(E,E)=0$.
Then $E$ is $\mu$-semi-stable.
\end{lem}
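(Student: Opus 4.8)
The plan is to recognize that the three hypotheses on $E$ single it out as a \emph{simple, rigid} sheaf with $\langle v(E)^2\rangle=-2$, and then to reduce directly to Proposition \ref{prop:spherical2}, which in fact yields the stronger conclusion of $\mu$-stability.

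First I would record that $\Hom(E,E)=k$ is precisely the statement that $E$ is simple. Next I would compute the Euler pairing, using all three hypotheses at once:
\[
\chi(E,E)=\dim\Hom(E,E)-\dim\Ext^1(E,E)+\dim\Ext^2(E,E)=1-0+1=2 .
\]
The bridge to the Mukai vector is the Riemann--Roch identity $\chi(E,E)=-\langle v(E)^2\rangle$ on an Enriques surface, which is a straightforward Hirzebruch--Riemann--Roch computation with $\td(X)=1-\tfrac{K_X}{2}+[\mathrm{pt}]$ and $\chi(\mathcal{O}_X)=1$; the potentially dangerous term $(c_1(E),K_X)$ drops out because $K_X$ is numerically trivial, so the identity is valid for both classical and non-classical $X$. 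Hence $\langle v(E)^2\rangle=-2$.

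Having established that $E$ is a simple, torsion free sheaf of rank $2$ with $\langle v(E)^2\rangle=-2$, I would invoke Proposition \ref{prop:spherical2} verbatim: such a sheaf is $\mu$-stable with respect to any polarization. Since $\mu$-stability implies $\mu$-semi-stability, the lemma follows.

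The only delicate point -- the main obstacle -- is verifying $\chi(E,E)=-\langle v(E)^2\rangle$ in this characteristic-$2$, possibly non-reduced Picard scheme setting, i.e.\ confirming that no $K_X$-correction enters; this is exactly where the chosen normalization $v(E)=(\rk E,c_1(E),\chi(E)-\tfrac{\rk E}{2})$ and the numerical triviality of $K_X$ are used, and once checked the rest is immediate. If one prefers to avoid citing Proposition \ref{prop:spherical2}, one can reproduce its destabilization analysis: a rank-$1$ saturated destabilizer $0\to E_1\to E\to E_2\to 0$ with torsion free quotient satisfies $\langle v(E_i)^2\rangle\ge -1$, while simpleness of $E$ forces $\Hom(E_2,E_1)=0$ and (via Serre duality together with $(c_1(E_1)-c_1(E_2),H)\ge 0$) also $\Ext^2(E_2,E_1)=0$, so non-splitting gives $\Ext^1(E_2,E_1)\ne 0$ and thus $\langle v(E_1),v(E_2)\rangle>0$; this contradicts
\[
-2=\langle v(E)^2\rangle=\langle v(E_1)^2\rangle+\langle v(E_2)^2\rangle+2\langle v(E_1),v(E_2)\rangle\ge 0 .
\]
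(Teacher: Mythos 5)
Your proof is correct, and it takes a genuinely different route from the paper's own argument. You turn the hypotheses into $\chi(E,E)=1-0+1=2$, invoke the Riemann--Roch identity $\chi(E,E)=-\langle v(E)^2\rangle$ (valid for classical and non-classical $X$ alike, since $K_X$ is numerically trivial and $\chi({\cal O}_X)=1$), and then quote Proposition \ref{prop:spherical2} to get the stronger conclusion that $E$ is $\mu$-stable for every polarization; there is no circularity, as the proof of that proposition nowhere uses this lemma. The paper instead argues directly, using rigidity structurally rather than only numerically: if $E$ were not $\mu$-semi-stable, then $\Ext^1(E,E)=0$ makes $E$ locally free, the destabilizing sequence can be normalized (after twisting by $E_1^{\vee}$) to $0\to{\cal O}_X\to E\to I_Z(L)\to 0$ with $(L,H)<0$, and then $\chi(I_Z(L),{\cal O}_X)=\deg Z\geq 0$ combined with $\Hom(I_Z(L),{\cal O}_X)=\Ext^2(I_Z(L),{\cal O}_X)=0$ forces $\deg Z=0$ and $\Ext^1(I_Z(L),{\cal O}_X)=0$, so the sequence splits, contradicting $\Hom(E,E)=k$. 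The two contradictions run in opposite directions: the paper deduces that the destabilizing sequence splits, whereas Proposition \ref{prop:spherical2} (your main route, and also your fallback sketch) uses the non-splitting guaranteed by simplicity to get $\Ext^1(E_2,E_1)\neq 0$ and a numerical contradiction with $\langle v(E)^2\rangle=-2$. What each buys: your reduction is shorter, isolates the only data actually needed (simplicity and $\langle v(E)^2\rangle=-2$), and upgrades the conclusion to $\mu$-stability with respect to any polarization; the paper's proof is self-contained and shows concretely how rigidity, not merely its Euler-characteristic shadow, destroys a destabilizing subsheaf. One small gloss in your fallback sketch: killing $\Ext^2(E_2,E_1)$ requires not only that $c_1(E_2)-c_1(E_1)+K_X$ be non-effective but also that it be nonzero, which the paper extracts from $4\nmid\langle v(E)^2\rangle$; since your main route cites Proposition \ref{prop:spherical2} as a black box, where this point is handled, it does not affect your proof.
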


\begin{proof}
Assume that $E$ is not $\mu$-semi-stable.
Then we have an exact sequence
$$
0 \to E_1 \to E \to E_2 \to 0
$$ 
such that $E_1$ and $E_2$ are torsion freesheaves of rank 1 with
$(c_1(E_1)-c_1(E_2),H)<0$.
Since $E$ is rigid,
$E$ is locally free, and hence, $E_1$ is also locally free.
Then
replacing $E$ by $E \otimes E_1^{\vee}$,
we may assume that $E_1={\cal O}_X$ and
$E_2=I_Z(L)$.
Then $(L^2)=4\deg Z-2$ and
$\chi(I_Z(L))=\chi(I_Z(L),{\cal O}_X)=\deg Z$.
Since $E$ is simple, $\Hom(I_Z(L),{\cal O}_X)=0$.
By $(L,H)<0$,
$\Ext^2(I_Z(L),{\cal O}_X)=0$.
Hence $\deg Z = 0$ and $\Ext^1(I_Z(L),{\cal O}_X)=0$,
which implies $E={\cal O}_X \oplus I_Z(L)$.
Then $E$ is not simple.
Therefore $E$ is $\mu$-semi-stable.
\end{proof}
\end{NB}

{\it Proof of Theorem \ref{thm:exceptional}:

Let $F$ be a stable sheaf with $\langle v(F)^2 \rangle=-2$.
There is a divisor $C$ such that $F(C)$ satisfies the assumption
of  Proposition \ref{prop:MO}.
Then $D:=c_1(F(C))-K_X$ is nodal, which shows that
$c_1(F) \equiv D+K_X \mod 2$.

Conversely  for a Mukai vector $v=(2,L,a)$ with
$L=D+K_X+2C$, there is a $\mu$-stable locally free sheaf
$E$ with $v(E(C))=v$ by Proposition \ref{prop:h^1}. 
\qed

\section{Appendix}

\subsection{Fourier-Mukai transforms}
Let $p_i:X \times X \to X$ $(i=1,2)$ be the projection.
Let $\Delta \subset X \times X$ be the diagonal.
Then we have  
$\Ext_{p_2}^1({\cal O}_X(K_X) \boxtimes {\cal O}_X(K_X),I_\Delta) 
\cong {\cal O}_X$.
\begin{NB}
We have $\Hom_{p_2}({\cal O}_X(K_X) \boxtimes {\cal O}_X(K_X),I_\Delta) =0$.
Hence
$$
\Ext^1_{{\cal O}_{X \times X}}({\cal O}_X(K_X) 
\boxtimes {\cal O}_X(K_X),I_\Delta)
\cong H^0(\Ext^1_{p_2}({\cal O}_X(K_X) \boxtimes {\cal O}_X(K_X),I_\Delta) )=k.
$$
\end{NB}
Let ${\cal E}$ be the universal extension:
\begin{equation}
0 \to I_\Delta \to {\cal E} \to 
{\cal O}_X(K_X) \boxtimes {\cal O}_X(K_X) \to 0.
\end{equation}
Let $\Phi_{X \to X}^{{\cal E}}:{\bf D}(X) \to {\bf D}(X)$
be an integral functor whose kernel is
${\cal E}$:
\begin{equation}\label{eq:FM}
\Phi_{X \to X}^{{\cal E}}(E):=
{\bf R}p_{2*}(p_1^*(E) \overset{\bf L}{\otimes} {\cal E}),\;
E \in {\bf D}(X).
\end{equation} 

\begin{prop}\label{prop:FM}
\begin{enumerate}
\item[(1)]
${\cal E}_{|\{ x\} \times X}$ is stable for all $x \in X$ and
\begin{equation}\label{eq:univ}
\begin{matrix}
\phi:&X & \to & M_H(v_0)\\
& x & \mapsto & {\cal E}_{|\{x \} \times X}
\end{matrix}
\end{equation} 
is an isomorphism.
\item[(2)]
$\Phi_{X \to X}^{{\cal E}}$ is an equivalence. 
In particular 
$\Phi_{X \to X}^{{\cal E}}$ induces 
an automorphism of $K(X)$ which is given by
\begin{equation}\label{eq:(-1)}
\Phi_{X \to X}^{{\cal E}}(E)=\chi(E)({\cal O}_X +{\cal O}_X(K_X))-E,\;
E \in K(X).
\end{equation}
\end{enumerate}
\end{prop}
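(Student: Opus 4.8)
The plan is to establish \emph{(1)} in two stages---stability of the fibres, then the isomorphism---and to read off \emph{(2)} from the $K$-theoretic structure of the kernel, deferring the surjectivity of $\phi$ to the equivalence proved along the way.

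First I would restrict the defining extension of ${\cal E}$ to $\{x\}\times X$. Since $I_\Delta$ and ${\cal O}_X(K_X)\boxtimes{\cal O}_X(K_X)$ are both flat over the first factor, so is ${\cal E}$, and the restriction is the underived one: writing ${\cal E}_x:={\cal E}_{|\{x\}\times X}$ and using $(I_\Delta)_{|\{x\}\times X}=I_x$ (the ideal of $x$) and $({\cal O}_X(K_X)\boxtimes{\cal O}_X(K_X))_{|\{x\}\times X}={\cal O}_X(K_X)$, I obtain
$$
0\to I_x\to {\cal E}_x\to {\cal O}_X(K_X)\to 0 .
$$
Hence $v({\cal E}_x)=v({\cal O}_X)+v({\cal O}_X(K_X))-v(k_x)=v_0$, and since ${\cal O}_X(K_X)$ is invertible one has $\mathcal{E}xt^1({\cal O}_X(K_X),I_x)=0$, so the sequence is locally split and ${\cal E}_x$ is torsion free, locally free exactly away from $x$. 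For stability note $c_1({\cal E}_x)=K_X$ is numerically trivial, so the slope is $0$; a rank-one subsheaf $F$ either maps nontrivially to the quotient ${\cal O}_X(K_X)$, forcing $K_X-c_1(F)$ effective, or lands in $I_x\subset{\cal O}_X$, forcing $-c_1(F)$ effective, and in both cases $(c_1(F),H)\le 0$, so ${\cal E}_x$ is $\mu$-semistable for every $H$. Gieseker stability then reduces to excluding the two slope-zero line bundles ${\cal O}_X$ and ${\cal O}_X(K_X)$ as subsheaves, i.e. to $H^0({\cal E}_x)=0$ and $\Hom({\cal O}_X(K_X),{\cal E}_x)=0$; these follow by chasing the restricted sequence against the nontriviality of its extension class, exactly as in the proof of Proposition \ref{prop:h^1}(1). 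Flatness of ${\cal E}$ over the first factor now produces the classifying morphism $\phi$ of \eqref{eq:univ}.

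Because ${\cal E}_x$ is locally free precisely off $x$, the point $x$ is intrinsic to the isomorphism class of ${\cal E}_x$; thus $\phi$ is injective, and for $x\ne y$ the stable sheaves ${\cal E}_x,{\cal E}_y$ have equal reduced Hilbert polynomial but are non-isomorphic, so $\Hom({\cal E}_x,{\cal E}_y)=0$. For \emph{(2)} I would first get \eqref{eq:(-1)} from the class identity $[{\cal E}]=[I_\Delta]+[{\cal O}_X(K_X)\boxtimes{\cal O}_X(K_X)]$: the factor ${\cal O}_X(K_X)\boxtimes{\cal O}_X(K_X)$ contributes $\chi(E){\cal O}_X(K_X)$ by the projection formula (using $\chi(E(K_X))=\chi(E)$), while $[I_\Delta]=[{\cal O}_{X\times X}]-[{\cal O}_\Delta]$ contributes $\chi(E){\cal O}_X-E$; adding gives $\Phi_{X\to X}^{\cal E}(E)=\chi(E)({\cal O}_X+{\cal O}_X(K_X))-E$. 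For the equivalence I would verify the orthonormality criterion for the family $\{{\cal E}_x\}_{x\in X}$, in the form adapted to the Serre functor $-\otimes{\cal O}_X(K_X)[2]$: $\Hom({\cal E}_x,{\cal E}_x)=k$ (stability), $\Hom^i({\cal E}_x,{\cal E}_y)=0$ for $x\ne y$ and all $i$, together with the Serre condition ${\cal E}_x\otimes{\cal O}_X(K_X)\cong{\cal E}_x$. Granting the last condition, $\Hom^2({\cal E}_x,{\cal E}_y)=\Hom({\cal E}_y,{\cal E}_x(K_X))^\vee=0$ for $x\ne y$, and then $\Hom^1({\cal E}_x,{\cal E}_y)=0$ because $\chi({\cal E}_x,{\cal E}_y)=-\langle v_0^2\rangle=0$; so the criterion yields that $\Phi_{X\to X}^{\cal E}$ is an equivalence. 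In the non-classical case $K_X=0$, so the Serre condition is automatic.

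It remains to upgrade $\phi$ to an isomorphism, which I would derive from the equivalence itself. Flat base change along the fibre gives $\Phi_{X\to X}^{\cal E}(k_x)={\cal E}_x$, and an equivalence induces isomorphisms $\Ext^i(k_x,k_x)\cong\Ext^i({\cal E}_x,{\cal E}_x)$; in particular $d\phi$ is an isomorphism of tangent spaces, so the injective, proper morphism $\phi$ is a closed immersion onto a connected component of $M_H(v_0)$. For surjectivity one applies the inverse equivalence to an arbitrary stable $F$ with $v(F)=v_0$: its image has the $K$-class of the structure sheaf of a point, and showing that this image is a genuine skyscraper $k_x$ gives $F\cong\Phi_{X\to X}^{\cal E}(k_x)={\cal E}_x$. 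I expect this last step---controlling $\Phi^{-1}(F)$ as an actual point sheaf rather than a complex, via a $\WIT$/base-change argument---to be the main obstacle. The subsidiary difficulty is the Serre condition ${\cal E}_x\otimes{\cal O}_X(K_X)\cong{\cal E}_x$ when $K_X\ne 0$, which I would obtain from the uniqueness of the stable $v_0$-sheaf with a prescribed non-locally-free point, since $-\otimes{\cal O}_X(K_X)$ preserves both $v_0$ and that point.
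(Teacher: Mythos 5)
Your proposal follows the paper's architecture step for step: restrict the universal extension to $\{x\}\times X$, prove stability of ${\cal E}_x:={\cal E}_{|\{x\}\times X}$ by excluding slope-zero subsheaves (your connecting-map argument is equivalent to the paper's: a destabilizing rank-one subsheaf has $\chi\geq 1$, hence maps isomorphically onto ${\cal O}_X(K_X)$ and splits the extension), recover $x$ as the non-locally-free point to get injectivity of $\phi$, deduce \eqref{eq:(-1)} from the class of ${\cal E}$ in $K(X\times X)$, and feed the fibrewise data into Bridgeland's criterion. All of that is sound and coincides with the paper.

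However, the two steps you defer are exactly where the paper's proof has its content, and as written they are genuine gaps. (a) Bridgeland's criterion needs the Serre condition ${\cal E}_x(K_X)\cong{\cal E}_x$, which is automatic only for non-classical $X$. Your substitute---``uniqueness of the stable $v_0$-sheaf with a prescribed non-locally-free point''---is precisely the assertion that requires proof: it is true, but proving it amounts to showing $({\cal E}_x)^{\vee\vee}\cong{\cal O}_X\oplus{\cal O}_X(K_X)$ (Remark \ref{rem:DD}; this uses $H^1({\cal O}_X(K_X))=0$ when $K_X\neq 0$) and then checking that, modulo $\Aut({\cal O}_X\oplus{\cal O}_X(K_X))\cong k^*\times k^*$, only the surjections ${\cal O}_X\oplus{\cal O}_X(K_X)\to k_x$ with both components nonzero have stable kernels, so that all stable kernels are isomorphic. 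You never carry this out. The paper avoids it entirely: it proves directly, without the equivalence, that the Kodaira--Spencer map is injective (Remark \ref{rem:KS}), so $\dim\Ext^1({\cal E}_x,{\cal E}_x)\geq\dim\im\phi=2$; combined with $\chi({\cal E}_x,{\cal E}_x)=0$ and $\Hom({\cal E}_x,{\cal E}_x)=k$ this forces $\Ext^2({\cal E}_x,{\cal E}_x)=\Hom({\cal E}_x,{\cal E}_x(K_X))^{\vee}\neq 0$, whence ${\cal E}_x\cong{\cal E}_x(K_X)$ by stability. Note also the ordering problem your plan creates: you obtain the immersion property of $\phi$ from the equivalence, but the equivalence needs the Serre condition, so your argument is non-circular only if the uniqueness claim is proved independently, as above. (b) Surjectivity of $\phi$, which you flag as ``the main obstacle,'' needs no WIT/base-change analysis: if $F$ is stable with $v(F)=v_0$ and $F\not\cong{\cal E}_x$ for every $x$, then stability together with ${\cal E}_x(K_X)\cong{\cal E}_x$ gives $\Hom(F,{\cal E}_x)=\Ext^2(F,{\cal E}_x)=0$, and $\chi(F,{\cal E}_x)=-\langle v_0^2\rangle=0$ then kills $\Ext^1(F,{\cal E}_x)$; hence ${\bf R}\Hom(\Phi_{X\to X}^{{\cal E}}{}^{-1}(F),k_x)\cong{\bf R}\Hom(F,{\cal E}_x)=0$ for all $x$, so the inverse transform of $F$ is orthogonal to all skyscrapers and therefore zero, contradicting $F\neq 0$. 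This orthogonality argument is what the paper's appeal to ``the general theory of Fourier--Mukai transforms'' amounts to; filling in (a) and (b) this way would complete your proof.
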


\begin{proof}
We note that ${\cal E}_{|\{x \} \times X}$ $(x \in X)$
is a non-trivial extension
\begin{equation}\label{eq:univ-ext}
0 \to I_{\{x \}} \to {\cal E}_{|\{x \} \times X}
\overset{\varphi}{\to} {\cal O}_X(K_X) \to 0
\end{equation}
and 
$({\cal E}_{|\{x \} \times X})^{\vee \vee}/{\cal E}_{|\{x \} \times X} 
\cong k_x$.
If ${\cal E}_{|\{x \} \times X}$ $(x \in X)$ is not stable, then
there is a torsion free subsheaf $E_1$
of ${\cal E}_{|\{x \} \times X}$ such that
$\rk E_1= 1$ and $\chi(E_1) \geq 1$. 
Then we see that $\varphi_{|E_1} \ne 0$, which implies 
\eqref{eq:univ-ext} split.  
%
Therefore ${\cal E}_{|\{x \} \times X}$ $(x \in X)$ is stable,
and we get the morphism $\phi$.
Since $\phi$ is injective,
we have $\dim \im \phi=2$.
We also see that $\phi$ is an immersion (Remark \ref{rem:KS}).

Since $\chi({\cal E}_{|\{x \} \times X},{\cal E}_{|\{x \} \times X})=0$ and
$\dim 
\Ext^1({\cal E}_{|\{x \} \times X},{\cal E}_{|\{x \} \times X}) 
\geq \dim \im \phi =2$,
we see that 
$$
\Hom({\cal E}_{|\{x \} \times X},{\cal E}_{|\{x \} \times X}(K_X)) 
=\Ext^2({\cal E}_{|\{x \} \times X},{\cal E}_{|\{x \} \times X})^{\vee} \ne 0,
$$
which implies 
${\cal E}_{|\{x \} \times X}(K_X) \cong {\cal E}_{|\{x \} \times X}$.
We also have $\dim \Ext^1({\cal E}_{|\{x \} \times X},
{\cal E}_{|\{x \} \times X})=2$.
Then the integral functor
$\Phi_{X \to X}^{{\cal E}}$ is an equivalence (\cite{Br:2}).
By the general theory of Fourier-Mukai transform,
$\phi(X)$ is the unique component of 
$M_H(v_0)$. Therefore $\phi$ is an isomorphism. 
\begin{NB}
(cf. \cite[p.405]{PerverseII})
\end{NB}
Since 
$$
{\cal E}={\cal O}_{X \times X}+{\cal O}_X(K_X) \boxtimes {\cal O}_X(K_X)
-{\cal O}_\Delta
$$ 
in $K(X \times X)$,
we have \eqref{eq:(-1)}.
\end{proof}

\begin{rem}\label{rem:DD}
If $K_X=0$, then 
$\Ext^1({\cal O}_X,I_{\{x \}}) \to \Ext^1({\cal O}_X,{\cal O}_X)$
is an isomorphism.
Hence $({\cal E}_{|\{x \} \times X})^{\vee \vee}$
fits in a non-trivial extension
$$
0 \to {\cal O}_X \to ({\cal E}_{|\{x \} \times X})^{\vee \vee} 
\to {\cal O}_X \to 0.
$$
On the other hand,
$({\cal E}_{|\{x \} \times X})^{\vee \vee}=
{\cal O}_X \oplus {\cal O}_X(K_X)$ if $K_X \ne 0$.
\end{rem}

\begin{rem}\label{rem:KS}
By the description of $({\cal E}_{|\{x \} \times X})^{\vee \vee}$
in Remark \ref{rem:DD},
we also have $\Hom({\cal E}_{|\{x \} \times X}),{\cal O}_X(K_X)) \cong k$.
If the Kodaira-Spencer map
$\Ext^1(k_x,k_x) \to 
\Ext^1({\cal E}_{|\{x \} \times X},{\cal E}_{|\{x \} \times X})$
is not isomorphic,
for a subscheme $Z$ of $X$ with
an exact sequence 
$0 \to k_x \to {\cal O}_Z \to k_x \to 0$,
we have a splitting
${\cal E}_{|Z \times X} \cong {\cal E}_{|\{x \} \times X}^{\oplus 2}$ as an
${\cal O}_X$-module.
Since $\Hom({\cal E}_{|\{x \} \times X}),{\cal O}_X(K_X)) \cong k$,
we see that $I_Z=\ker({\cal E}_{|Z \times X},{\cal O}_{Z \times X}(K_X))$
is isomorphic to $I_{\{ x\}}^{\oplus 2}$ as an ${\cal O}_X$-module.
Therefore the Kodaira-Spencer map is injective. 
\end{rem}

\begin{prop}
If $\gcd(r,L,s)=2$, $(L^2)-rs=0$ and $L \equiv \frac{r}{2}K_X
\mod 2$, then $M_H(r,L,\frac{s}{2})$
is smooth of dimension 2, and the universal family
defines a Fourier-Mukai transform.
\end{prop}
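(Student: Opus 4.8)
The plan is to follow the pattern of Proposition \ref{prop:FM}, of which this is the natural generalisation: the vector $v_0=(2,K_X,0)$ satisfies the present hypotheses, and there $M_H(v_0)\cong X$ is smooth of dimension $2$ with universal family a Fourier--Mukai kernel. Writing $v=(r,L,\tfrac s2)$, the hypothesis $(L^2)-rs=0$ says $\langle v^2\rangle=0$, so $\chi(E,E)=-\langle v^2\rangle=0$ for every stable $E$ with $v(E)=v$ (as $K_X$ is numerically trivial). Since $r$ is even, $rK_X=0$ in $\NS(X)$, so $v(E(K_X))=v$ and $-\otimes K_X$ is an involution of $M_H(v)$; primitivity of $v$ guarantees that for general $H$ the space $M_H(v)$ is projective and carries a universal family ${\cal E}$ on $M_H(v)\times X$.

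The crux is to prove that this involution is trivial, i.e.\ $E\cong E(K_X)$ for all such $E$; this is where $\gcd(r,L,s)=2$ and $L\equiv\tfrac r2K_X\bmod 2$ enter (for $K_X=0$ it is automatic). I would deduce it by reducing $v$ to $v_0$: these two congruences are exactly the conditions placing $v$ in the orbit of $v_0$ under the group generated by line-bundle twists and the Fourier--Mukai transform $\Phi_{X\to X}^{\cal E}$ of Proposition \ref{prop:FM}, a suitable composite of which carries $M_H(v)$ isomorphically onto $M_{H'}(v_0)\cong X$ compatibly with $-\otimes K_X$. As Proposition \ref{prop:FM} shows ${\cal E}_{|\{x\}\times X}(K_X)\cong{\cal E}_{|\{x\}\times X}$, the involution is trivial on $M_H(v_0)$, hence on $M_H(v)$. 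It follows that $\Ext^2(E,E)=\Hom(E,E(K_X))^\vee=\Hom(E,E)^\vee=k$, and, combined with $\chi(E,E)=0$ and $\Hom(E,E)=k$, that $\dim\Ext^1(E,E)=2$.

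For smoothness I would use that $T_{[E]}M_H(v)=\Ext^1(E,E)$ has constant dimension $2$ and that the obstruction, lying in $\Ext^2(E,E)=k$, vanishes. The trace map $\tr\colon\Ext^2(E,E)\to H^2({\cal O}_X)=k$ is an isomorphism: under Serre duality it is the functional given by pairing against $\id_E\neq 0$ for the trace form on ${\cal E}nd(E)$, which is nondegenerate in every characteristic, so the even rank and $\chr(k)=2$ cause no difficulty. Hence $\Ext^2(E,E)_0=\Ker\tr=0$. Since $\det{\cal E}$ is locally constant on $M_H(v)$, its deformation is unobstructed, so the trace of the obstruction class vanishes; being trace-free it then lies in $\Ext^2(E,E)_0=0$, and $M_H(v)$ is smooth of dimension $2$.

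Finally, to see that ${\cal E}$ defines a Fourier--Mukai transform I would check Bridgeland's criterion for $\Phi_{M_H(v)\to X}^{\cal E}$ (as in Proposition \ref{prop:FM}, via \cite{Br:2}). For $y_1\neq y_2$ the fibres ${\cal E}_{y_1},{\cal E}_{y_2}$ are non-isomorphic stable sheaves of the same slope, so $\Hom({\cal E}_{y_1},{\cal E}_{y_2})=0$; by ${\cal E}_{y_1}\cong{\cal E}_{y_1}(K_X)$ and Serre duality $\Ext^2({\cal E}_{y_1},{\cal E}_{y_2})=\Hom({\cal E}_{y_2},{\cal E}_{y_1})^\vee=0$; and $\chi({\cal E}_{y_1},{\cal E}_{y_2})=-\langle v^2\rangle=0$ then forces $\Ext^1({\cal E}_{y_1},{\cal E}_{y_2})=0$ as well, while for $y_1=y_2$ one has $\Hom=k$ and no negative $\Ext$'s. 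These are precisely the orthogonality conditions making $\Phi_{M_H(v)\to X}^{\cal E}$ fully faithful, and ${\cal E}_y(K_X)\cong{\cal E}_y$ promotes it to an equivalence. The main obstacle is the second step, the triviality of $-\otimes K_X$ on $M_H(v)$; the dimension count, smoothness, and full faithfulness are then formal consequences of $\langle v^2\rangle=0$ and the orthogonality of distinct stable sheaves.
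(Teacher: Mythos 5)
Your reduction step is the real gap. You assert that, because $v$ and $v_0$ lie in one orbit of the group generated by line-bundle twists and $\Phi_{X\to X}^{\cal E}$, ``a suitable composite carries $M_H(v)$ isomorphically onto $M_{H'}(v_0)\cong X$.'' A derived equivalence acts on Mukai vectors, but it does not automatically carry stable sheaves to stable sheaves (nor even to sheaves: the image may have cohomology in several degrees), so no isomorphism of moduli spaces follows from the lattice statement; proving preservation of stability is precisely the hard content here. Note also that if you had this isomorphism you would already be finished --- $X$ is smooth, projective, of dimension $2$ --- so your subsequent deformation-theoretic discussion would be superfluous. The paper avoids this issue entirely: it invokes the finite-field point count of \cite[Cor. 4.5]{Y:Enriques}, namely $\# M_H(r,L,\tfrac{s}{2})'({\Bbb F}_q)=\# X'({\Bbb F}_q)$, which holds without knowing that any Fourier--Mukai transform preserves stability. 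This gives $\dim M_H(v)=2$; smoothness then follows from $\dim\Ext^1(E,E)=1+\dim\Hom(E,E(K_X))\le 2$; and only after that is $E\cong E(K_X)$ \emph{deduced} (smoothness forces $\dim\Ext^1(E,E)=2$, hence $\Ext^2(E,E)\ne 0$). Your logical order is exactly the reverse of the paper's, and the first arrow in your chain is unsupported.

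The smoothness argument is also flawed on its own terms, in both cases. For classical $X$ one has $H^2({\cal O}_X)=0$, so $\ker\tr^2=\Ext^2(E,E)\cong k$ once $E\cong E(K_X)$: the obstruction space does not vanish, and no formal argument gives smoothness. For non-classical $X$ one has $\chr(k)=2$ and $2\mid\rk E$, so ${\cal E}nd_0(E)$ is \emph{not} a direct summand of ${\cal E}nd(E)$ (here $\id_E$ is itself trace-free); the obstruction for the fixed-determinant problem lies in $H^2({\cal E}nd_0(E))$ (Lemma \ref{lem:obst}), which is not $\ker\tr^2$: from $0\to{\cal E}nd_0(E)\to{\cal E}nd(E)\to{\cal O}_X\to 0$ and your (correct) observation that $\tr^2$ is an isomorphism, one gets $H^2({\cal E}nd_0(E))\cong\coker\bigl(\tr^1:\Ext^1(E,E)\to H^1({\cal O}_X)\bigr)$, which is in fact $\cong k$ here (a posteriori $\tr^1=0$, since the Zariski tangent space is $\ker\tr^1$ --- not $\Ext^1(E,E)$ as you assume --- and must be $2$-dimensional). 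This is exactly why the unobstructedness results in the paper's Appendix are restricted to $r$ odd, where $(\rk E,\chr k)=1$ makes ${\cal E}nd_0(E)$ a summand of ${\cal E}nd(E)$. In short, in both cases the relevant obstruction space is $1$-dimensional, smoothness is a genuinely non-formal fact, and the point-count input (or an actual proof of your reduction step) cannot be dispensed with.
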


\begin{proof}
Let $X' \to {\Bbb F}_q$ be a reduction of $X$ to a finite field ${\Bbb F}_q$
and $M_H(r,L,\frac{s}{2})'$ be the corresponding 
moduli space of stable sheaves on $X'$.
By \cite[Cor. 4.5]{Y:Enriques},
$\# M_H(r,L,\frac{s}{2})'({\Bbb F}_q)=\# X'({\Bbb F}_q)$.
Hence $\dim  M_H(r,L,\frac{s}{2})=2$.
Since $\dim \Ext^1(E,E) \leq 2$ for all $E \in M_H(r,L,\frac{s}{2})$,
$M_H(r,L,\frac{s}{2})$ is smooth of 
$\dim M_H(r,L,\frac{s}{2})=2$.
Then we have $\Hom(E,E(K_X)) \ne 0$, which shows that
$E(K_X) \cong E$. Therefore the universal family defines a Fourier-Mukai
transform.
\end{proof}

\begin{rem}
For a stable locally free sheaf $G$ with $\langle v(G)^2 \rangle=-1$,
we set ${\cal G}:=\ker(G \boxtimes G^{\vee} \to {\cal O}_{\Delta})$.
Then $\Ext_{p_2}^1(p_1^*(G(K_X)),{\cal G}) \cong G(K_X)^{\vee}$ and
the universal extension 
$$
0 \to {\cal G} \to {\cal E} \to G(K_X) \boxtimes G(K_X)^{\vee} \to 0
$$
defines a universal family of stable sheaves
with Mukai vector $2\rk G v(G)-v(k_x)$.
\end{rem}

\begin{NB}

Assume that $K_X=0$. 
For $\omega=tH$, $t>0$,
let $Z_{(0,\omega)}:{\bf D}(X) \to {\Bbb C}$ be a stability function
defined by
\begin{equation}
Z_{(0,\omega)}(E):=\langle e^{\omega \sqrt{-1}},v(E) \rangle,\;
E \in {\bf D}(E).
\end{equation}
Let ${\frak T}_{(0,\omega)}$ be the full subcategory 
of $\Coh(X)$ generated by torsion sheaves and torsion free
stable sheaves $E$ with
$Z_{(0,\omega)}(E) \in {\Bbb H} \cup {\Bbb R}_{<0}$.
Let ${\frak F}_{(0,\omega)}$ be the full subcategory 
of $\Coh(X)$ generated by torsion free
stable sheaves $E$ with
$-Z_{(0,\omega)}(E) \in {\Bbb H} \cup {\Bbb R}_{<0}$.
Let ${\frak A}_{(0,\omega)} (\subset {\bf D}(X))$ be the category generated by
${\frak T}_{(0,\omega)}$ and ${\frak F}_{(0,\omega)}[1]$.
If$(\omega^2) \ne 1$, then
$\sigma{(0,\omega)}:=({\frak A}_{(0,\omega)},Z_{(0,\omega)})$
is a stability condition.
${\frak A}_{(0,\omega)}$ is constant
on $(\omega^2) \ne 1$.
We set 
\begin{equation}
{\frak A}_{(0,\omega)}:=
\begin{cases}
{\frak A}^\mu, & (\omega^2)>1\\
{\frak A}, & (\omega^2)<1.
\end{cases}
\end{equation}
\begin{defn}
\begin{enumerate}
\item[(1)]
For $(\omega^2)>1$, we set
${\frak T}^\mu:={\frak T}_{(0,\omega)}$,
${\frak F}^\mu:={\frak F}_{(0,\omega)}$ and
${\frak A}^\mu:={\frak A}_{(0,\omega)}$.
\item[(2)]
For $(\omega^2)<1$, we set
${\frak T}:={\frak T}_{(0,\omega)}$,
${\frak F}:={\frak F}_{(0,\omega)}$ and
${\frak A}:={\frak A}_{(0,\omega)}$.
\end{enumerate}
\end{defn}
For $E \in {\frak F}^\mu$, 
we have an exact sequence
\begin{equation}
0 \to E_1 \to E \to E_2 \to 0
\end{equation} 
such that
\begin{enumerate}
\item[(1)]
 $E_1$ is generated by ${\cal O}_X$, and
\item[(2)]
 $E_2 \in {\frak F}^\mu$ satisfies
$\Hom({\cal O}_X,E_2)=0$, i.e.,
$E_2 \in {\frak F}$.
\end{enumerate}
For $E \in {\frak T}$, 
we have an exact sequence
$$
0 \to E_1 \to E \to E_2 \to 0
$$
such that $E_2$ is a successive extension of ${\cal O}_X$ and
$E_1 \in {\frak T}^\mu$.

As in \cite{MYY:2011:1},
$\Phi_{X \to X}^{{\cal E}^{\vee}[1]}:{\bf D}(X) \to
{\bf D}(X)$
induces an isomorphism
${\frak A} \to {\frak A}^\mu$
\begin{NB2}
${\cal O}_X$ are irreducible objects of ${\cal A}$.
${\cal E}_{|\{x \} \times X}[1]$ is an irreducible objects
of ${\cal A}$.
$k_x$ is not irreducible in ${\frak A}$:
\begin{equation}
0 \to F \to k_x \to  {\cal E}_{|\{x \} \times X}[1]
\to 0,
\end{equation}
where $F$ is a non-trivial extension
$$
0 \to {\cal O}_X \to F \to {\cal O}_X \to 0.
$$ 
\end{NB2}
and we have a commutative diagram
\begin{equation}
\begin{CD}
{\frak A} @>{\Phi_{X \to X}^{{\cal E}^{\vee}[1]}}>> {\frak A}^\mu\\
@V{Z_{(0,\omega)}}VV @VV{Z_{(0,\omega')}}V\\
{\Bbb C} @<<{\times (\omega^2)}< {\Bbb C}
\end{CD}
\end{equation}
where $\omega'=\omega/(\omega^2)$.
In particular, we get the following.
\begin{prop}
$\Phi_{X \to X}^{{\cal E}^{\vee}[1]}$ induces an isomorphism
\begin{equation}
{\cal M}_{(0,\omega)}\left(r,\eta+\frac{r}{2}K_X,-\frac{s}{2}\right)
\cong {\cal M}_{(0,\omega')}\left(s,\eta+\frac{s}{2}K_X,-\frac{r}{2}\right).
\end{equation}
\end{prop}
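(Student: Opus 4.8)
The plan is to read the statement off the equivalence $\Phi_{X \to X}^{{\cal E}^{\vee}[1]}$ together with the commutative diagram that immediately precedes it, so that the only genuinely new points are (a) the computation of the induced map on Mukai vectors and (b) the upgrade from a bijection of (semi)stable objects to an isomorphism of moduli \emph{schemes}. First I would record the action of $\Phi_{X \to X}^{{\cal E}^{\vee}[1]}$ on $K(X)$. Since the shift $[1]$ introduces a sign and, because $K_X=0$, the derived dual of the diagonal satisfies ${\cal O}_\Delta^{\vee} \cong {\cal O}_\Delta[-2]$, one gets $[{\cal E}^{\vee}]=[{\cal E}]$ in $K(X \times X)$, hence $[{\cal E}^{\vee}[1]]=-[{\cal E}]$. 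Combined with \eqref{eq:(-1)} this yields
\[
\Phi_{X \to X}^{{\cal E}^{\vee}[1]}(E)=E-\chi(E)({\cal O}_X+{\cal O}_X(K_X)), \quad E \in K(X).
\]
Translating to Mukai vectors, this sends $(r,\eta+\tfrac{r}{2}K_X,-\tfrac{s}{2})$ to $(s,\eta+\tfrac{s}{2}K_X,-\tfrac{r}{2})$, which matches the numerical invariants in the statement.

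Next I would invoke the structural facts already established just above the proposition: $\Phi_{X \to X}^{{\cal E}^{\vee}[1]}$ is an equivalence (a shift of a duality applied to the equivalence of Proposition \ref{prop:FM}), it carries the heart ${\frak A}$ isomorphically onto ${\frak A}^\mu$, and by the commutative diagram it satisfies $Z_{(0,\omega)}=(\omega^2)\,Z_{(0,\omega')}\circ \Phi_{X \to X}^{{\cal E}^{\vee}[1]}$ on ${\frak A}$. Because $(\omega^2)>0$ is a positive real scalar, multiplication by it fixes every ray through the origin and therefore preserves the phase of each nonzero central charge. Hence, for a short exact sequence $0 \to F \to E \to G \to 0$ in ${\frak A}$, the comparison of phases of $F$ and $E$ is equivalent to the corresponding comparison for the images in ${\frak A}^\mu$. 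Since the equivalence is exact on hearts, subobjects correspond to subobjects, and we conclude that $E$ is $\sigma_{(0,\omega)}$-(semi)stable if and only if $\Phi_{X \to X}^{{\cal E}^{\vee}[1]}(E)$ is $\sigma_{(0,\omega')}$-(semi)stable. This produces a bijection between the (semi)stable objects of the two prescribed Mukai vectors.

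Finally I would promote this object-level bijection to an isomorphism of schemes. Applying $\Phi_{X \to X}^{{\cal E}^{\vee}[1]}$ to the relative universal family over ${\cal M}_{(0,\omega)}(r,\eta+\tfrac{r}{2}K_X,-\tfrac{s}{2})$ should yield a family of $\sigma_{(0,\omega')}$-stable objects of the transformed Mukai vector, hence a morphism of moduli functors; the quasi-inverse equivalence supplies a morphism in the opposite direction, and the two compose to the identity on objects by construction. The main obstacle is exactly this last step: one must verify that the transform of a flat family is again flat with the expected fibrewise cohomological amplitude, so that it genuinely defines a $T$-point of the target moduli space rather than a complex with jumping cohomology. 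I would settle this by a base-change and semicontinuity argument, using that each fibre is a single $\sigma_{(0,\omega')}$-stable object concentrated in one degree together with the openness of stability, which makes the construction functorial in the base and gives the desired isomorphism ${\cal M}_{(0,\omega)}(r,\eta+\tfrac{r}{2}K_X,-\tfrac{s}{2}) \cong {\cal M}_{(0,\omega')}(s,\eta+\tfrac{s}{2}K_X,-\tfrac{r}{2})$.
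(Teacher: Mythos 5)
Your proof is correct and takes essentially the same route as the paper: there the proposition is stated as an immediate consequence (``In particular, we get the following'') of the isomorphism of hearts ${\frak A} \to {\frak A}^\mu$ induced by $\Phi_{X \to X}^{{\cal E}^{\vee}[1]}$ and the commutative diagram relating $Z_{(0,\omega)}$ to $Z_{(0,\omega')}$ via multiplication by the positive real scalar $(\omega^2)$, which is precisely the phase-preservation argument you spell out. The extra details you supply---the $K$-theoretic computation $[{\cal E}^{\vee}[1]]=-[{\cal E}]$ (valid since $K_X=0$ here) giving the stated action on Mukai vectors, and the family-level upgrade to an isomorphism of moduli schemes---correctly fill in steps the paper leaves implicit.
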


\begin{rem}
Since ${\cal O}_X[1]$ is an irrreducible object of
$\Phi_{X \to X}^{{\cal E}[1]}({\cal O}_X[1])$ is irreducible.
Indeed since $\dim H^2({\cal E}_{|\{ x \} \times X})=1$,
$\chi({\cal E}_{|\{ x \} \times X})=1$ implies
$H^1({\cal E}_{|\{ x \} \times X})=0$ for all $x \in X$.
Hence $R^2 p_{2*}({\cal E})=\Phi_{X \to X}^{{\cal E}}({\cal O}_X[2])$
is a line bundle. It is easy to see that it is ${\cal O}_X$.  
\end{rem}

\end{NB}

\subsection{The case where $r$ is odd}
For a numerically equivalence class $\xi$,
let $N_H(r,\xi,\frac{s}{2})$ be the moduli space
of stable sheaves $E$ of $\rk E=r$,
$c_1(E) \mod K_X =\xi$ and $\chi(E)=(r+s)/2$.
Then we have a morphism
$N_H(r,\xi,\frac{s}{2}) \to \Pic^{\xi}(X)$,
where $\Pic^{\xi}(X)$ is the subscheme of $\Pic(X)$
consisting of divisor classes $L$ with $L \mod K_X= \xi$.

\begin{prop}
If $r$ is odd, then
$N_H(r,\xi,\frac{s}{2}) \to \Pic^{\xi}(X)$ is a smooth morphism.
In particular $M_H(v)$ is smooth of $\dim M_H(v)=\langle v^2 \rangle+1$,
where $L \mod K_X =\xi$ and $v=(r,L,\frac{s}{2})$.
\end{prop}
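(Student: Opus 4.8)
The plan is to exhibit the morphism as the determinant map $E\mapsto \det E\in\Pic^{\xi}(X)$, whose differential at $[E]$ is the trace $\tr\colon \Ext^1(E,E)\to H^1({\cal O}_X)=T_{[\det E]}\Pic(X)$, and then to verify formal smoothness by the infinitesimal lifting criterion. The whole argument will rest on the trace splitting, which is available precisely because $r$ is odd: in the non-classical case $\chr(k)=2$, so $r$ odd forces $r\in k^{*}$, and composing the canonical map $H^i({\cal O}_X)\to \Ext^i(E,E)$ induced by $\id_E$ with $\tr$ is multiplication by $\rk E=r$ on $H^i({\cal O}_X)$. Hence $\tr$ is split surjective and $\Ext^i(E,E)=H^i({\cal O}_X)\oplus \Ext^i(E,E)_0$, where $\Ext^i(E,E)_0:=\ker\tr$. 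I expect this to be the conceptual heart, and the place where the hypothesis on $r$ is genuinely used.

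The key computation I would isolate first is the vanishing $\Ext^2(E,E)_0=0$. By Serre duality $\Ext^2(E,E)\cong \Hom(E,E(K_X))^{\vee}$, and since $K_X=0$ and $E$ is stable this is $\Hom(E,E)^{\vee}=k$. As $H^2({\cal O}_X)=k$ as well and the map $H^2({\cal O}_X)\to \Ext^2(E,E)$ is injective (its composite with $\tr$ is multiplication by the unit $r$), this inclusion must be an isomorphism, so $\Ext^2(E,E)_0=\ker\tr=0$. Simultaneously $\tr\colon \Ext^1(E,E)\to H^1({\cal O}_X)$ is surjective. These two facts are exactly the relative deformation-theoretic input I need.

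Next I would run the lifting criterion. Let $A'\twoheadrightarrow A$ be a small extension of Artinian local $k$-algebras with kernel $I$, let $E_A$ be an $A$-flat family deforming $E$, and let ${\cal L}_{A'}$ be a line bundle on $X\times\Spec A'$ restricting to $\det E_A$. The obstruction to deforming $E_A$ over $A'$ lies in $\Ext^2(E,E)\otimes I$, and its image under $\tr$ is the obstruction to deforming $\det E_A$, which vanishes because ${\cal L}_{A'}$ exists. Hence the obstruction lies in $\Ext^2(E,E)_0\otimes I=0$, so a lift $E'_{A'}$ exists. Its determinant differs from ${\cal L}_{A'}$ by a class in $H^1({\cal O}_X)\otimes I$, and since $\tr$ is surjective on $\Ext^1$ I can modify $E'_{A'}$ by a suitable element of $\Ext^1(E,E)\otimes I$ to arrange $\det E_{A'}={\cal L}_{A'}$. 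This shows $N_H(r,\xi,\tfrac{s}{2})\to \Pic^{\xi}(X)$ is formally smooth, hence smooth as it is of finite type. The step I expect to require the most care is the trace-compatibility of the two obstruction theories (that of $E$ and that of $\det E$), which is the standard but technical point justifying ``$\tr(\text{obstruction of }E)=\text{obstruction of }\det E$''.

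Finally, for the last assertion I would identify $M_H(v)$ with the fibre of this morphism over the $k$-point $[{\cal O}_X(L)]$ of $\Pic^{\xi}(X)$; smoothness of the morphism then makes $M_H(v)$ smooth over $k$ of dimension equal to the relative dimension $\dim\Ext^1(E,E)_0=\dim\Ext^1(E,E)-\dim H^1({\cal O}_X)$. Using $\Hom(E,E)=\Ext^2(E,E)=k$ and $\chi(E,E)=-\langle v^2\rangle$ one gets $\dim\Ext^1(E,E)=\langle v^2\rangle+2$, whence $\dim M_H(v)=\langle v^2\rangle+1$, as claimed.
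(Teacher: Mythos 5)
Your proposal runs on the same engine as the paper's proof: oddness of $r$ makes $\rk E$ a unit in $k$ (also in the non-classical case, where $\chr(k)=2$), the trace therefore splits $\Ext^i(E,E)=H^i({\cal O}_X)\oplus\ker\tr^i$, and stability plus Serre duality kill $\ker\tr^2$ --- this is exactly the paper's vanishing $H^2({\cal E}nd_0(E))=0$, and your dimension count is the paper's. The genuine gap is the step you flag and then wave through as ``standard but technical'': the compatibility $\tr(\mathrm{ob}(E_A))=\mathrm{ob}(\det E_A)$, and its tangent-level analogue, applied to an \emph{arbitrary} stable sheaf $E$. Points of $N_H(r,\xi,\frac{s}{2})$ need not be locally free, and the references in which this compatibility is standard (Huybrechts--Lehn Ch.\ 4.5 and its sources) are written in characteristic zero; here you sit precisely in the situation those treatments avoid: $\chr(k)=2$ and $\Pic^0(X)$ a non-reduced group scheme ($\mu_2$ or $\alpha_2$), so line bundles are genuinely obstructed and nothing about the determinant fibration can be taken for free. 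For locally free sheaves the compatibility is an honest transition-matrix computation (ultimately $\det(I+B\epsilon)=1+\tr(B)\epsilon$), valid in any characteristic; for merely torsion-free $E$ you give no argument at all, and that missing argument is the entire deformation-theoretic content of the proposition.

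The paper closes exactly this hole with two moves your proposal lacks. First, Lemma \ref{lem:obst} proves, by an explicit \v{C}ech-cocycle computation, that for a \emph{locally free} $E_0$ the obstruction to lifting $E_0$ together with an isomorphism of its determinant with a prescribed lifted line bundle lies in $H^2({\cal E}nd_0(E_0))$; this is your trace compatibility, established by hand in the only case where the matrix argument is available. Second, the general case is reduced to the locally free one via the Fourier--Mukai transform of Proposition \ref{prop:FM}: since $\det\Phi_{X \to X}^{{\cal E}}(E)=-\det E$, the transform intertwines the two determinant fibrations, so smoothness of $N_H(r,\xi,\frac{s}{2})\to\Pic^{\xi}(X)$ may be tested after replacing $E$ by the locally free sheaf $\Phi_{X \to X}^{{\cal E}}(E(nH))$, $n\gg 0$. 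To complete your argument you must either insert this reduction, or replace the characteristic-zero citation by a characteristic-free construction of the determinant of perfect complexes together with its obstruction-theoretic functoriality (in the style of Illusie); without one of these, your central step is unsupported. Two smaller points: you silently restrict to $K_X=0$ (the classical case is easy --- $r$ odd gives $E\not\cong E(K_X)$, hence $\Ext^2(E,E)=0$, and $\Pic^{\xi}(X)$ is reduced --- but it is part of the statement), and your identification of $M_H(v)$ with the fibre over the $k$-point $[{\cal O}_X(L)]$ is correct, since smoothness is stable under base change even though $\Pic^{\xi}(X)$ is non-reduced.
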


\begin{proof}
We take $E \in M_H(v)$.
If $K_X \ne 0$, then
$\Pic(X)$ is smooth.
Moreover $E \otimes K_X \not \cong E$.
Hence $M_H(v)$ is smooth at $E$.
Assume that $K_X=0$. We note that
$\chr(k)=2$.
For the Fourier-Mukai transform in \eqref{eq:FM},
Proposition \ref{prop:FM} implies $\det \Phi_{X \to X}^{{\cal E}}(E)=-\det E$.
Thus 
we have a commutative diagram
\begin{equation}
\begin{CD}
{\bf D}(X) @>{\Phi_{X \to X}^{{\cal E}}}>> {\bf D}(X)\\
@V{\det}VV @VV{\det}V\\
\Pic(X)@>{D}>> \Pic(X)
\end{CD}
\end{equation}
where $D$ is the taking dual map.
Hence $\Phi_{X \to X}^{{\cal E}}$ preserves the structure
of fibration $\det$.
In particular, the smoothness of $\det$ is preserved under
  $\Phi_{X \to X}^{{\cal E}}$.
Replacing $E$ by $\Phi_{X \to X}^{{\cal E}}(E(nH))$,
we may assume that 
$E$ is locally free. Then the obstruction of infinitesimal lifting
lives in $H^2({\cal E}nd_0(E))$ by Lemma \ref{lem:obst}.
Since $(\rk E,\chr k)=1$, 
${\cal E}nd_0(E)$ is a direct summand of 
${\cal E}nd(E)$. 
Since $\Ext^2(E,E)=\Hom(E,E)^{\vee} \cong k$,
we get $H^2({\cal E}nd_0(E))=0$, which implies
$\det$ is smooth.
\begin{NB}
Let $\mathrm{tr}:{\bf R}{\cal H}om(E,E) \to {\cal O}_X$ 
be the trace map. 
For the homomorphism
$\iota:{\cal O}_X \to {\bf R}{\cal H}om(E,E)$ 
induced by the multiplication by constant,
$\mathrm{tr} \circ \iota$ is isomorphic. 
Hence  $\mathrm{tr}^i:\Ext^i(E,E) \to H^i({\cal O}_X)$
are surjective for all $i$.
\end{NB}
Since $H^1({\cal E}nd_0(E))=\ker \tr^1$ is the tangent space
of $M_H(v)$,
$\dim M_H(v)=\langle v^2 \rangle+1$.
\end{proof}

By using \cite[Cor. 4.5]{Y:Enriques},
we get a similar result to \cite{Y:twist1}.
\begin{prop}
Assume that $X$ is defined over a finite field ${\Bbb F}_q$.
If $r$ is odd, then
$\#M_H(v)({\Bbb F}_q)=\# \Hilb_X^{(\langle v^2 \rangle+1)/2}({\Bbb F}_q)$.
In particular $M_H(v)$ is irreducible.
\end{prop}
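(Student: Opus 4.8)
The plan is to deduce the point-count equality from \cite[Cor. 4.5]{Y:Enriques} and then extract irreducibility by a Lang--Weil argument. First I would fix notation: since $r$ is odd and $r+s$ is even, $s$ is odd; as $(L^2)$ is even (the intersection form on $\NS(X)/{\Bbb Z}K_X\cong U\oplus E_8$ is even and $K_X$ is numerically trivial), $\langle v^2\rangle=(L^2)-rs$ is odd, so $n:=(\langle v^2\rangle+1)/2$ is a positive integer. I would also record that $\Hilb_X^n\cong M_H(1,0,\tfrac{1}{2}-n)$, since a length-$n$ ideal sheaf $I_Z$ is automatically a stable rank-$1$ sheaf with $\langle v(I_Z)^2\rangle=2n-1=\langle v^2\rangle$. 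By the previous proposition both $M_H(v)$ and $\Hilb_X^n$ are smooth of the same pure dimension $\langle v^2\rangle+1=2n$.

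The core step is the identity $\#M_H(v)({\Bbb F}_q)=\#\Hilb_X^n({\Bbb F}_q)$. Following the method of \cite{Y:twist1}, I would show that, for $r$ odd, the counting function $v\mapsto \#M_H(v)({\Bbb F}_q)$ depends only on $\langle v^2\rangle$. The mechanism is that the Fourier--Mukai transform $\Phi_{X\to X}^{{\cal E}}$ of Proposition \ref{prop:FM}, together with its natural variants (tensoring by line bundles defined over ${\Bbb F}_q$ and the associated reflection and twist functors), are equivalences defined over ${\Bbb F}_q$ that preserve $\langle v^2\rangle$ and carry stable sheaves to stable sheaves; because $r$ is odd the relevant moduli spaces are fine and contain no strictly semistable points, so each such equivalence induces a genuine bijection on ${\Bbb F}_q$-rational points of the corresponding moduli spaces. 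Chaining these operations, in the style of a Euclidean reduction on the pair $(r,s)$ (the transform swaps $r$ and $s$ up to sign), I would move $v$ to the rank-$1$ vector $(1,0,\tfrac{1}{2}-n)$ while keeping the ${\Bbb F}_q$-count unchanged. This is exactly the content packaged by \cite[Cor. 4.5]{Y:Enriques}, which supplies the transform-invariance of the count; specialising at $r=1$ yields $\#M_H(v)({\Bbb F}_q)=\#\Hilb_X^n({\Bbb F}_q)$.

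For irreducibility I would apply the same identity over every finite extension ${\Bbb F}_{q^m}$ (running the argument over $X\times_{{\Bbb F}_q}{\Bbb F}_{q^m}$), so that $M_H(v)$ and $\Hilb_X^n$ have equal numbers of rational points over all ${\Bbb F}_{q^m}$. Since $M_H(v)$ is smooth of pure dimension $d:=\langle v^2\rangle+1$, its geometrically irreducible components coincide with its connected components and all have dimension $d$. By the Lang--Weil estimate $\#M_H(v)({\Bbb F}_{q^m})=Nq^{md}+O(q^{m(d-1/2)})$, where $N$ is the number of top-dimensional geometrically irreducible components; the same estimate for $\Hilb_X^n$, which is geometrically irreducible (the Hilbert scheme of points on a smooth irreducible surface is irreducible of dimension $2n$), has leading coefficient $1$. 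Comparing leading terms forces $N=1$, whence $M_H(v)$ is irreducible.

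The main obstacle is the middle step: verifying that the ${\Bbb F}_q$-point count is a Fourier--Mukai invariant depending only on $\langle v^2\rangle$. The delicate points are that every functor used must be realised over ${\Bbb F}_q$ and must preserve stability (so that it gives a bijection on rational points rather than merely on geometric points), and that the primitivity/oddness hypothesis genuinely rules out walls where strictly semistable sheaves appear, which would otherwise spoil both the bijection and the Lang--Weil count. Once \cite[Cor. 4.5]{Y:Enriques} is in hand this is essentially bookkeeping, and the concluding Lang--Weil deduction is routine.
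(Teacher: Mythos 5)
Your proposal is correct and takes essentially the same route as the paper: the paper gives no written argument for this proposition beyond the remark that it follows from \cite[Cor. 4.5]{Y:Enriques} ``in the manner of'' \cite{Y:twist1}, and your reconstruction---the point-count identity supplied by that corollary (via Fourier--Mukai invariance of counts and reduction to the rank-one vector $(1,0,\tfrac12-n)$), followed by a Lang--Weil comparison over all extensions ${\Bbb F}_{q^m}$ with the geometrically irreducible Hilbert scheme to force a single top-dimensional component---is precisely the intended argument. As you note, the substantive step (stability-preserving equivalences realised over ${\Bbb F}_q$) is carried entirely by \cite[Cor. 4.5]{Y:Enriques}, which you, like the paper, invoke as a black box; the rest is the standard smoothness/pure-dimension input from the preceding proposition plus routine point counting.
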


\begin{NB}
Assume that $E$is locally free.
Then $\ker \mathrm{tr}={\cal H}om(E,E)_0$
is a locally free sheaf and the obstruction of lifting
of $(E,\det E \cong L)$ belongs to
$H^2({\cal H}om(E,E)_0)$.
If $\gcd(\rk E,\chr(k))=1$, then 
$H^2({\cal H}om(E,E)_0) 
=\ker \mathrm{tr}^2$, however 
in general, $H^1({\cal O}_X) \to H^2({\cal H}om(E,E)_0)$
may not be injective.

If $\Pic^0(X)$ is smooth, then
the obstruction of lifting of $E$ itself belongs to
$\ker \mathrm{tr}^2$.
But if $\Pic^0(X)$ is not smooth, then we can only say that
the obstruction of lifting of $E$ itself belongs to
$\Ext^2(E,E)$.
\end{NB}

\begin{lem}\label{lem:obst}
For a locally free sheaf $E_0 \in N_H(r,\xi,\frac{s}{2})$,
the obstruction of infinitesimal lifting at $E_0$ with respect
to the morphism
 $N_H(r,\xi,\frac{s}{2}) \to \Pic(X)$ belongs to
$H^2({\cal E}nd_0(E_0))$, where
${\cal E}nd_0(E_0):=\ker({\cal E}nd(E_0) \overset{\tr}{\to}
{\cal O}_X)$ is the sheaf of trace free homomorphisms.
\end{lem}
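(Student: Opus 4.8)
The plan is to compute the relative obstruction directly from the \v{C}ech description of deformations of the locally free sheaf $E_0$, keeping track of the determinant through the trace map. It suffices to consider a small extension of local Artinian $k$-algebras $0 \to J \to \tilde A \to A \to 0$ with $J \cong k$ and $J$ annihilated by the maximal ideal of $\tilde A$. Suppose given a locally free lift $E_A$ of $E_0$ over $X_A := X \times_{\Spec k} \Spec A$ together with a line bundle $\tilde L$ on $X_{\tilde A} := X \times_{\Spec k} \Spec \tilde A$ lifting $\det E_A$; the datum of $\tilde L$ is precisely a lift over $\tilde A$ of the image of $E_A$ under $N_H(r,\xi,\frac{s}{2}) \to \Pic(X)$. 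What must be produced is the obstruction to lifting $E_A$ to a locally free $E_{\tilde A}$ on $X_{\tilde A}$ with $\det E_{\tilde A} \cong \tilde L$, and it must be shown to lie in $H^2({\cal E}nd_0(E_0))$.

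First I would recall the absolute picture. Fix an affine open cover $\{ U_\alpha \}$ on which $E_0$, and hence $E_A$, is free. Local lifts $\tilde E_\alpha$ of $E_A|_{U_\alpha}$ over $\tilde A$ exist and are unique up to isomorphism, and a choice of gluing isomorphisms lifting those of $E_A$ fails the cocycle identity on triple overlaps by a \v{C}ech $2$-cocycle with values in ${\cal E}nd(E_0) \otimes J \cong {\cal E}nd(E_0)$, whose class $o \in H^2({\cal E}nd(E_0)) = \Ext^2(E_0,E_0)$ is the usual obstruction to lifting $E_A$ as a sheaf. Applying $\tr$ termwise, $\tr(o) \in H^2({\cal O}_X)$ is the obstruction to deforming $\det E_A$ along $\Pic(X)$; since the lift $\tilde L$ exists, $\tr(o) = 0$, so the trace of the cocycle is a coboundary $\delta(b)$ for some $1$-cochain $b$ with values in ${\cal O}_X$.

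The crux is to convert this vanishing into a trace-free representative. Since $\tr \colon {\cal E}nd(E_0) \to {\cal O}_X$ is surjective, lift $b$ to a $1$-cochain $a$ with values in ${\cal E}nd(E_0)$ and $\tr(a) = b$, and twist the gluing isomorphisms by $1 + a$ (using the generator of $J$, so that $a^2 = 0$ makes $1+a$ an automorphism); this changes the representing cocycle by the coboundary $\delta(a)$ without altering its class $o$. The trace of the new representative equals $\tr(o) - \delta(b) = 0$, so it takes values in ${\cal E}nd_0(E_0) = \Ker(\tr)$ and defines a class in $H^2({\cal E}nd_0(E_0))$ mapping to $o$ under $H^2({\cal E}nd_0(E_0)) \to H^2({\cal E}nd(E_0))$. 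This class is the relative obstruction: it vanishes if and only if the trace-free cocycle is a coboundary in ${\cal E}nd_0(E_0)$, and then adjusting the gluings by the corresponding trace-free $1$-cochain produces a lift $E_{\tilde A}$ whose determinant is still $\tilde L$.

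The main obstacle is well-definedness, and this is precisely where the non-classical case is delicate. Different choices of local lifts, gluings, and of the cochain $a$ alter the trace-free representative only by a coboundary in ${\cal E}nd_0(E_0)$, so the class is independent of these auxiliary choices; what must be pinned down is the dependence on the determinant lift. The freedom in the trivialization $b$ is a $1$-cocycle in ${\cal O}_X$, i.e. a class in $H^1({\cal O}_X)$, and replacing $b$ by $b + z$ shifts the relative obstruction by $\partial[z]$, where $\partial$ is the connecting homomorphism of $0 \to {\cal E}nd_0(E_0) \to {\cal E}nd(E_0) \xrightarrow{\tr} {\cal O}_X \to 0$. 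This is exactly the indeterminacy $\partial(H^1({\cal O}_X)) \subset H^2({\cal E}nd_0(E_0))$ that distinguishes $H^2({\cal E}nd_0(E_0))$ from $\Ker(\tr^2)$, and it is absorbed by fixing a definite lift $\tilde L$: a choice of $\tilde L$ determines $b$ up to a coboundary, pinning the class in $H^2({\cal E}nd_0(E_0))$, whereas replacing $\tilde L$ by another lift moves the class by $\partial$ of the corresponding element of $H^1({\cal O}_X)$. When $\gcd(r,\chr k)=1$ the sequence splits, $\partial = 0$, and $H^2({\cal E}nd_0(E_0)) = \Ker(\tr^2)$, recovering the classical statement.
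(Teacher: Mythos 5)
Your proposal is correct and is essentially the paper's own argument: both compute the lifting obstruction as a \v{C}ech $2$-cocycle in ${\cal E}nd(E_0)$ and use the given determinant lift to replace it by a trace-free representative, the only difference being that you subtract the coboundary $\delta(a)$ after forming the cocycle, while the paper first normalizes the lifted patching data so that $\det A_{ij}=a_{ij}$, making the cocycle land in ${\cal E}nd_0(E_0)$ from the start. Your well-definedness discussion (the $\partial(H^1({\cal O}_X))$ ambiguity being absorbed by fixing the determinant lift) is the counterpart of the paper's adjustment by $\psi_i=1+\lambda_i\epsilon$ with $\tr C_{ij}+\lambda_j-\lambda_i=0$, so there is no gap.
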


\begin{proof}
Let $(R,{\frak m})$ be an Artinian local ring over $k$
with $R/{\frak m} \cong k$ and $(0 \ne) \epsilon \in {\frak m}$ satisfies
${\frak m}\epsilon=0$. We set $R':=R/k \epsilon$.
Assume that there is a locally free sheaf $E'$ on
$X \otimes_k {R'}$ and a line bundle $L$ on $X \otimes_k R$ such that
$L \otimes_R R'=\det E$.
We take an open covering $\{U_i \}_i$ of $X$ such that
$E'$ is trivial over $U_i$.
We set
$U_{ij}:=U_i \cap U_j$ and $U_{ijk}:=U_i \cap U_j \cap U_k$.  
Let 
$A_{ij}:{\cal O}_{U_{ij}\otimes_k R}^{\oplus r}
\to  {\cal O}_{U_{ij}\otimes_k R}^{\oplus r}$
be a lifting of the patching data $A_{ij}':
{\cal O}_{U_{ij}\otimes_k R'}^{\oplus r}  
\to  {\cal O}_{U_{ij}\otimes_k R'}^{\oplus r}$
of $E'$. 
For the induced map
$\det A_{ij}:
\det {\cal O}_{U_{ij}\otimes_k R}^{\oplus r} \to 
\det {\cal O}_{U_{ij}\otimes_k R}^{\oplus r}$,
$\det A_{ij} \equiv a_{ij} \mod k \epsilon$.
Hence $(\det A_{ij})a_{ij}^{-1}=1+b_{ij}\epsilon$
($b_{ij} \in H^0(U_{ij},{\cal O}_{U_{ij}})$).
We take $B_{ij} \in GL(r,{\cal O}_{U_{ij}})$  
such that $\tr(B_{ij})=b_{ij}$.
For $A_{ij}(I-B_{ij}\epsilon)$,
\begin{equation}
\begin{split}
\det(A_{ij}(I-B_{ij}\epsilon))=&\det A_{ij}
\det (I-B_{ij}\epsilon)\\
=& \det A_{ij}(1-b_{ij}\epsilon)
=(a_{ij}(1+b_{ij} \epsilon))(1-b_{ij}\epsilon)=a_{ij}.
\end{split}
\end{equation}
Replacing $A_{ij}$ by $A_{ij}(I-B_{ij}\epsilon)$,
we may assume that $\det A_{ij}=a_{ij}$.
Then $A_{ki}A_{jk}A_{ij}=I+B_{ijk}\epsilon$,
where $\{ B_{ijk} \}$ is a 2-cocycle of ${\cal E}nd_0(E_0)$. 
If $A_{ij}(I+C_{ij}\epsilon)$ also induces an extension of isomorphism
$\det E \cong L$, then there are isomorphisms
$\psi_i:\det {\cal O}_{U_i \otimes_k R}^{\oplus r}
\to \det {\cal O}_{U_i \otimes_k R}^{\oplus r}$ such that
$a_{ij}=\psi_j 
\det (A_{ij}(I+C_{ij}\epsilon) )\psi_i^{-1}$.
Since 
$a_{ij} \equiv \psi_j 
\det A_{ij}\psi_i^{-1} \equiv
\psi_j a_{ij}\psi_i^{-1} \mod k \epsilon$,
there is $\psi' \in H^0({\cal O}_{X \otimes_k {R'}})=R'$ such that
$(\psi_i) \otimes R' =\psi'_{|U_i \otimes R'}$.
Let $\psi \in R$ be an extension of $\psi'$. 
Replacing $\psi_i$ by $\psi^{-1} \psi_i$,
we may assume that $\psi_i=1+\lambda_i \epsilon$.
Thus 
we have 
$a_{ij}=(1+\lambda_j \epsilon) 
\det (A_{ij}(I+C_{ij}\epsilon) )(1+\lambda_i \epsilon)^{-1}$.
Then
$a_{ij}=a_{ij}(1+(\lambda_j+\tr C_{ij}-\lambda_i)\epsilon)$, which implies
$\tr C_{ij}+\lambda_j-\lambda_i=0$.
Let $\Lambda_i:{\cal O}_{U_i \otimes_k R}^{\oplus r} \to 
{\cal O}_{U_i \otimes_k R }^{\oplus r}$
be homomorphisms with $\tr \Lambda_i=\lambda_i$.
Then 
$$
(I+\Lambda_j \epsilon)A_{ij}(I+C_{ij}\epsilon)(I+\Lambda_i \epsilon)^{-1}
=A_{ij}(I+C_{ij}'\epsilon),\;
C_{ij}':=(A_{ij}^{-1}\Lambda_j A_{ij})+C_{ij}-\Lambda_i,
$$
$\tr C_{ij}'=\lambda_j+\tr C_{ij}-\lambda_i=0$ and
$$
\det (A_{ij}(I+C_{ij}'\epsilon))=
\det A_{ij}(1+(\lambda_j+\tr C_{ij}-\lambda_i)\epsilon)
=a_{ij}.
$$
Moreover for the coboundary map
$\partial$ of ${\cal E}nd_0(E_0)$, we have $\partial C_{ij}'=\partial C_{ij}$.
If $B_{ijk}+\partial C_{ij}'=0$ for some $C_{ij}' \in 
{\cal E}nd_0(E_0)_{|U_{ij}}$, then 
we get a lifting $E$ of $E'$ with $\det E \cong L$.
\begin{NB}
\begin{equation}
(A_{ki}(I+C_{ki}' \epsilon)) (A_{jk}(I+C_{jk}' \epsilon)) 
(A_{ij}(I+C_{ij}' \epsilon))
=I+(B_{ijk}+C_{ij}'+A_{ij}^{-1}C_{jk}'A_{ij}+A_{ik}^{-1}C_{ki}'A_{ik})\epsilon 
=I+(B_{ijk}+\partial C_{ijk}')\epsilon.
\end{equation}
\end{NB}
Therefore the obstruction lives in
$H^2({\cal E}nd_0(E_0))$.
\begin{NB}
Tangent space:
Let $A_{ij}:{\cal O}_{U_{ij}}^{\oplus r} \to {\cal O}_{U_{ij}}^{\oplus r}$
be the patching data of $E_0$.  
The first order deformation of $E_0$ is parametrized by
$A_{ij}(I+B_{ij}\epsilon)$ such that
$\psi_j\det(A_{ij}(I+B_{ij}\epsilon))\psi_i^{-1}=a_{ij}$,
where $\det A_{ij}=a_{ij}$.
Then $\psi_i \mod k \epsilon= \psi_{|U_i}$
($\psi \in H^0({\cal O}_X)=k$).
Replacing $\psi_i$ by $\psi^{-1}\psi_i$, we have
$\psi_i=1+\lambda_i \epsilon$.
Then $\tr B_{ij}+\lambda_j-\lambda_i=0$.
Thus $\tr B_{ij}$ is zero in $H^1({\cal O}_X)$.
If we replace the trivialization
$I+P_i \epsilon:
{\cal O}_{U_{i}}^{\oplus r} \to {\cal O}_{U_{i}}^{\oplus r}$,
then $B_{ij}$ is replaced by
$B_{ij}'=-P_i+B_{ij}+A_{ij}^{-1}P_j A_{ij}$.
Hence $B_{ij} \mod \partial {\cal E}nd(E_0)$
is the data of the infinitesimal deformation. 
\end{NB}
\end{proof}

\begin{prop}
For $E \in M_H(r,L,\frac{s}{2})$, 
Zariski tangent space is 
$\ker(\Ext^1(E,E) \overset{\tr^1}{\to} H^1({\cal O}_X))$ and
$$
\dim M_H(r,L,\tfrac{s}{2})\geq (L^2)-rs+1.
$$
\end{prop}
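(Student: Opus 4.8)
Both assertions will be read off from the deformation theory of the morphism $N_H(r,\xi,\frac{s}{2}) \to \Pic^\xi(X)$ at $E$, where $\xi = L \bmod K_X$; note that $M_H(r,L,\frac{s}{2})$ is the fibre of this morphism over $L$. For the tangent space, first-order deformations of $E$ as a coherent sheaf are classified by $\Ext^1(E,E)$, and the derivative of the determinant morphism is the trace $\tr^1:\Ext^1(E,E) \to H^1({\cal O}_X)$, where $H^1({\cal O}_X)$ is the tangent space of $\Pic(X)$ at $\det E$. A deformation fixes the determinant precisely when it lies in $\ker \tr^1$; this is exactly the cocycle computation in the proof of Lemma \ref{lem:obst}, where $\{A_{ij}(I+B_{ij}\epsilon)\}$ preserves $\det E$ iff the class of $\tr B_{ij}$ vanishes in $H^1({\cal O}_X)$. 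Thus the Zariski tangent space at $E$ is $\ker(\Ext^1(E,E) \overset{\tr^1}{\to} H^1({\cal O}_X))$, which is the first claim.

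For the dimension bound I would use the standard estimate $\dim_E M_H(r,L,\frac{s}{2}) \geq t - o$, where $t$ is the dimension of the tangent space and $o$ that of an obstruction space. By Lemma \ref{lem:obst} the obstruction to lifting with fixed determinant lies in $H^2({\cal E}nd_0(E))$; to apply the lemma I would first reduce to the case that $E$ is locally free, replacing $E$ by $\Phi_{X \to X}^{\cal E}(E(nH))$ for $n \gg 0$ as in the proof for odd $r$ and using that $\Phi_{X \to X}^{\cal E}$ is an equivalence (Proposition \ref{prop:FM}), hence preserves the relevant $\Ext$-groups, the quantity $(L^2)-rs$, and the dimension. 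Then $o = \dim H^2({\cal E}nd_0(E))$, and I would compute $t-o$ from the long exact cohomology sequence of
$$
0 \to {\cal E}nd_0(E) \to {\cal E}nd(E) \overset{\tr}{\to} {\cal O}_X \to 0,
$$
which yields $\dim H^2({\cal E}nd_0(E)) = \dim\coker\tr^1 + \dim\ker\tr^2$. Substituting $t = \dim\ker\tr^1$ and cancelling the $\coker\tr^1$ contributions, and using that $\dim H^1({\cal O}_X) = \dim H^2({\cal O}_X)$ (both are $0$ in the classical and $1$ in the non-classical case), the estimate collapses to $t-o = \dim\Ext^1(E,E) - \dim\Ext^2(E,E) - \dim\coker\tr^2$. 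Since $E$ is stable, $\Hom(E,E) = k$, so by Riemann--Roch $\dim\Ext^1(E,E) - \dim\Ext^2(E,E) = 1 - \chi(E,E) = 1 - (rs-(L^2)) = (L^2)-rs+1$.

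The \emph{step I expect to require the most care}, and the only place where the positive characteristic genuinely enters, is the vanishing $\coker\tr^2 = 0$, i.e.\ the surjectivity of $\tr^2:\Ext^2(E,E) \to H^2({\cal O}_X)$. If $\chr(k)\nmid r$ this is immediate, since the composite $H^2({\cal O}_X) \to \Ext^2(E,E) \overset{\tr^2}{\to} H^2({\cal O}_X)$, whose first arrow is induced by $1 \mapsto \id_E$, is multiplication by $r$; but in the non-classical case $\chr(k)=2$ may divide $r$, so I would instead argue by Serre duality. Under duality $\tr^2$ is dual to the map $H^0({\cal O}_X(K_X)) \to \Hom(E,E(K_X))$ sending a global section $\sigma$ to $\sigma\cdot\id_E$, and since $K_X=0$ gives $H^0({\cal O}_X(K_X))\cong k$ with generator mapping to the nonzero endomorphism $\id_E$, this dual map is injective, whence $\tr^2$ is surjective. (For classical $X$ one has $H^2({\cal O}_X)=0$ and the surjectivity is trivial.) With $\coker\tr^2=0$ the right-hand side of the last identity equals $(L^2)-rs+1$, giving $\dim_E M_H(r,L,\frac{s}{2}) \geq (L^2)-rs+1$, as required.
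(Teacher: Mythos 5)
Your proposal is correct and takes essentially the same route as the paper: the tangent space is $\ker \tr^1$ by definition of the fixed-determinant moduli space, one reduces to locally free $E$ by the Fourier--Mukai transform of Proposition \ref{prop:FM}, the obstruction lies in $H^2({\cal E}nd_0(E))$ by Lemma \ref{lem:obst}, and the bound follows from the cohomology sequence of $0 \to {\cal E}nd_0(E) \to {\cal E}nd(E) \overset{\tr}{\to} {\cal O}_X \to 0$ together with stability and $\chi(E,E)=rs-(L^2)$. The only divergence is bookkeeping: you prove $\coker \tr^2=0$ via Serre duality (which in fact follows at once, in any characteristic, from $H^3({\cal E}nd_0(E))=0$ on a surface, so the step you flag as delicate is automatic), whereas the paper packages the same cohomological information into the four-term exact sequence $0 \to \ker \tr^1 \to \Ext^1(E,E) \to H^1({\cal O}_X) \to H^2({\cal E}nd_0(E)) \to 0$, using that $\tr^2$ is an isomorphism between the one-dimensional spaces $\Ext^2(E,E)$ and $H^2({\cal O}_X)$.
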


\begin{proof}
We only treat the case where $K_X=0$.
The first claim is obvious by the definition of 
$M_H(r,L,\frac{s}{2})$.
For the second claim, we may assume that $E$ is locally free by using
a Fourier-Mukai transform.
In this case, we have an exact sequence
$$
0 \to \ker \tr^1 \to \Ext^1(E,E) \to H^1({\cal O}_X) \to
H^2({\cal E}nd_0(E)) \to 0.
$$
Hence 
$$
\dim M_H(r,L,\tfrac{s}{2}) \geq
\dim \ker \tr^1-\dim H^2({\cal E}nd_0(E))=(\langle v(E)^2 \rangle+2)-1=
(L^2)-rs+1.
$$
\end{proof}

\begin{NB}
Assume that $X$ is an abelian surface.
There is an example of smooth moduli space. 
\begin{lem}[{\cite[Prop. 4.1]{Y:Nagoya}}]
If $(r,\chi(L))=1$, then
$M_H(r,c_1(L),a) \cong M_H(r,L,a) \times \Pic^0(X)$.
In particular, $M_H(r,L,a)$ is smooth.
\end{lem}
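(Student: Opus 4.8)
The plan is to recognise $M_H(r,c_1(L),a)$ as a fibration over $\Pic^0(X)$ cut out by the determinant and then to trivialise it by means of the coprimality hypothesis. First I would set up the determinant morphism
$$
\det\colon M_H(r,c_1(L),a)\longrightarrow \Pic^{[L]}(X),\qquad E\mapsto \det E,
$$
where $\Pic^{[L]}(X)\subset\Pic(X)$ is the connected component of line bundles numerically equivalent to $L$. This is a torsor under $\Pic^0(X)=\widehat X$, and its fibre over $[L]$ is by definition $M_H(r,L,a)$. The group $\Pic^0(X)$ acts on the source by $E\mapsto E\otimes P$, and since $\det(E\otimes P)=\det E\otimes P^{\otimes r}$, this action covers multiplication by $r$ on the target torsor. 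As multiplication by $r$ is surjective on the abelian variety $\widehat X$, the morphism $\det$ is surjective with all fibres isomorphic to $M_H(r,L,a)$.

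The asserted decomposition would then follow from an equivariant splitting of $\det$. Concretely, I would try to produce a morphism $s\colon M_H(r,c_1(L),a)\to\Pic^0(X)$ intertwining the tensoring action on the source with translation on the target and satisfying $s(E)^{\otimes r}\cong\det E\otimes L^{-1}$, i.e.\ an equivariant lift of $\det$ through the isogeny $[r]\colon\widehat X\to\widehat X$. Given such an $s$, the mutually inverse morphisms
$$
E\longmapsto\bigl(E\otimes s(E)^{-1},\,s(E)\bigr),\qquad (F,P)\longmapsto F\otimes P
$$
furnish the isomorphism $M_H(r,c_1(L),a)\cong M_H(r,L,a)\times\Pic^0(X)$.

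To construct $s$ I would use that $\gcd(r,\chi(L))=1$ forces the gcd of the components of the Mukai vector to be $1$, hence guarantees a universal family $\mathcal E$ on $M_H(r,c_1(L),a)\times X$. Tautological determinantal constructions applied to $\mathcal E$ then give auxiliary morphisms out of $M_H(r,c_1(L),a)$: the determinant itself transforms under $E\mapsto E\otimes P$ with multiplier $r$, while the determinant of the Fourier--Mukai transform of $\mathcal E$ (with the Poincaré bundle) yields a second invariant transforming with a multiplier governed by $\chi(L)$. Since $\gcd(r,\chi(L))=1$, a Bezout combination of these morphisms produces an invariant with multiplier exactly $1$, which is the required equivariant $r$-th root $s$; equivalently it exhibits the pullback of the isogeny $[r]$ along $\det$ as trivial. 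This step is the main obstacle: everything else is formal, whereas the triviality of the $\Pic^0(X)$-torsor---equivalently the precise bookkeeping of multipliers that makes the $r$-th root globally well defined---is exactly where the hypothesis $\gcd(r,\chi(L))=1$ is indispensable. Without it the fibration is only \'etale-locally trivial, with monodromy in $\widehat X[r]$.

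Finally, for the smoothness assertion I would invoke the standard fact that $M_H(r,c_1(L),a)$ is smooth: for a stable sheaf $E$ on the abelian surface $X$ the trace-free obstruction $\Ext^2(E,E)_0=\ker(\Ext^2(E,E)\to H^2({\cal O}_X))$ vanishes, since $\Ext^2(E,E)\cong\Hom(E,E)^{\vee}\cong k$ maps isomorphically onto $H^2({\cal O}_X)\cong k$ under the trace, while the remaining trace part of the obstruction is killed by the smoothness of $\Pic(X)$. As $\Pic^0(X)$ is a smooth abelian variety, the factor $M_H(r,L,a)$ is then smooth as well, this being also visible directly as the fibre of the now-smooth determinant morphism.
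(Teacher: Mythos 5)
The paper itself contains no proof of this lemma: it is quoted, inside a side remark on abelian surfaces, from Prop.\ 4.1 of Yoshioka's paper in Nagoya Math.\ J.\ \textbf{154} (1999). Measured against that argument (and against correctness), your proposal has a genuine gap, and it sits exactly in the part you dismiss as ``formal''. The map $(F,P)\mapsto F\otimes P$ from $M_H(r,L,a)\times\Pic^0(X)$ to $M_H(r,c_1(L),a)$ is \emph{never} injective for $r>1$: if $Q\in\widehat{X}[r]$ is a nonzero $r$-torsion point, then $\det(F\otimes Q)=L\otimes Q^{\otimes r}=L$, so $(F\otimes Q,\,P\otimes Q^{-1})$ and $(F,P)$ are distinct points of the source with the same image; this map is a finite \'etale covering of degree $r^4$, the quotient by the antidiagonal $\widehat{X}[r]$-action. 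Hence no choice of $s$ can make your two maps mutually inverse, and in fact the $s$ you postulate cannot exist: since $s(E)^{\otimes r}\cong\det E\otimes L^{-1}$, the restriction of $s$ to the fibre $M_H(r,L,a)$ (which is connected) takes values in the finite group scheme $\widehat{X}[r]$ and is therefore constant, while equivariance would force $s(F\otimes Q)=s(F)\otimes Q\neq s(F)$ with $F\otimes Q$ again in that fibre. Your Bezout combination of $\det$ with the Fourier--Mukai determinant does yield a morphism to $\widehat{X}$ that is equivariant with multiplier $1$ (the second invariant transforms by a homomorphism $\widehat{X}\to X$ whose composition with $\phi_L$ is $[\pm\chi(L)]$), but such a morphism can never also satisfy the $r$-th root identity: the two requirements you impose on $s$ are mutually exclusive, and the zero locus of the Bezout combination is not $M_H(r,L,a)$ but its quotient by $\widehat{X}[r]$. (A smaller inaccuracy: $\gcd(r,\chi(L))=1$ does not make the Mukai vector fine, as $\chi(L)$ is not one of its components; that point, however, could be circumvented with quasi-universal families.)

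What the hypothesis $\gcd(r,\chi(L))=1$ actually buys is a second group action that your proposal never uses: pullback by translations of $X$. One has $\det(T_x^*E)=\det E\otimes\phi_L(x)$, where $\phi_L:X\to\widehat{X}=\Pic^0(X)$, $\phi_L(x)=T_x^*L\otimes L^{-1}$, is the isogeny attached to the polarization; since $\ker\phi_L\subset X[\chi(L)]$, there is an isogeny $\lambda:\widehat{X}\to X$ with $\phi_L\circ\lambda=[\chi(L)]$. Choosing $u,v$ with $ur+v\chi(L)=1$, the morphism
\begin{equation*}
M_H(r,L,a)\times\Pic^0(X)\longrightarrow M_H(r,c_1(L),a),
\qquad (F,Q)\longmapsto T_{\lambda(Q^{\otimes v})}^*F\otimes Q^{\otimes u},
\end{equation*}
is well defined (translations and twists by degree-zero line bundles preserve $H$-stability, as they fix all numerical invariants) and has determinant $L\otimes Q^{\otimes(v\chi(L)+ur)}=L\otimes Q$; taking determinants therefore recovers $Q$ and then $F$, and running the construction backwards on any $E$, with $Q:=\det E\otimes L^{-1}$, gives surjectivity. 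This interplay of translation and tensoring, spliced by Bezout, is the actual content of the cited proof and the only place where coprimality enters. Granting the splitting, your smoothness paragraph is essentially fine: $M_H(r,c_1(L),a)$ is smooth because $\tr:\Ext^2(E,E)\to H^2({\cal O}_X)$ is an isomorphism and $\Pic(X)$ is smooth, and a direct factor of a smooth variety is smooth. But note that smoothness of the fixed-determinant space really must be deduced from the splitting rather than from a general principle about $\det$: the paper's own surrounding remark points out (via Schr\"{o}er) that on a supersingular abelian surface in characteristic $2$ the determinant fibration of $M_H(2,0,-1)$ --- where $\gcd(r,\chi(L))=2$ --- is not smooth.
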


Assume that $E \in M_H(r,L,a)$ is locally free.
Then $H^2({\cal E}nd_0(E))=0$ by the surjectivity of
$\tr^1$.

For $M_H(2,0,-1)$, we have an isomorphism
$M_H(2,0,-1) \to X \times \Hilb_{\widehat{X}}^2$
such that 
$\det: M_H(2,0,-1) \to \Pic^0(X)$ corresponds to
$X \times \Hilb_{\widehat{X}}^2 \to \Hilb_{\widehat{X}}^2  
\to \widehat{X}$, where $\widehat{X}=\Pic^0(X)$.
Hence $\det$ is not smooth, if $\chr(k)=2$ by
\cite{Sch}.
In particular $\tr^1$ is not surjective in general.

Assume that $\chr(k) \mid r$. Let $E$ be a simple semi-homogeneous
bundle on an abelian variety $X$.
\begin{enumerate}
\item
If $\dim X=1$, then
$H^2({\cal E}nd_0(E))=0$, which implies that
$\tr^1$ is isomorphic.
Hence $H^0({\cal O}_X) \cong H^1({\cal E}nd_0(E)) \cong k$
and $H^0({\cal E}nd_0(E)) \cong H^0({\cal E}nd(E))\cong k$.
\item
If $\dim X=2$, then
$\tr^1$ is the dual of 
$H^1({\cal O}_X) \to H^1({\cal E}nd(E))$ which is not isomorphic
by \cite[Prop. 5.9]{Mu0}.
In this case,
$H^2({\cal E}nd_0(E)) \ne 0$ and
$\ker \tr^1 \ne 0$. 
\end{enumerate}

\end{NB}


\begin{thebibliography}{00}
\bibitem{A}
Artin, M.,
{\it Some Numerical Criteria for Contractability of 
Curves on Algebraic Surfaces,} 
Amer. J. Math. 
{\bf 84}, No. 3 (1962), 485-496. 
\bibitem{BM}
Bombieri, E., Mumford, D., 
{\it Enriques' classification of surfaces in char. $p$. III.}
Invent. Math. {\bf 35}  (1976), 197--232.
\bibitem{Br:2}
Bridgeland, T.,
{\it Equivalences of triangulated categories and Fourier-Mukai
transforms,} 
Bull. London Math. Soc. {\bf 31} (1999), 25--34,
math.AG/9809114

\bibitem{CD}
Cossec, F., Dolgachev, I.,
{\it Enriques surfaces I,}
Birkh\"{a}user 1989
\begin{NB}
Dolgachev, I.,
{\it A brief introduction to Enriques surfaces,}
 arXiv:1412.7744
\end{NB}
\bibitem{ESB}
Ekedahl, T., Hyland, J., Shepherd-Barron, N., 
{\it Moduli and periods of simply connected Enriques surfaces,} 
arXiv:1210.0342.
\bibitem{I}
Illusie, L.,
{\it Complexe de de Rham-Witt et cohomologie cristalline.}
(French)  Ann. Sci. \'{E}cole Norm. Sup. (4)  {\bf 12}  (1979), no. 4, 
501--661. 

\bibitem{Kim1}
Kim, H.,
{\it Exceptional bundles on nodal Enriques surfaces,}
Manuscripta Math. {\bf 82}  (1994),  no. 1, 1--13. 
\bibitem{Lie}
Liedtke, C., 
{\it Arithmetic Moduli and Lifting of Enriques Surfaces,} 
arXiv:1007.0787,
J. Reine Angew. Math. {\bf 706} (2015), 35-65. 
\begin{NB}
\bibitem{MYY:2011:1}
H.\ Minamide, S.\ Yanagida and K.\ Yoshioka,
{\it The wall-crossing behavior 
for Bridgeland's stability conditions
on abelian and K3 surfaces,}
J. Reine Angew. Math. to appear
DOI: 10.1515/crelle-2015-0010.
\bibitem{Mu0}
Mukai, S.,
\emph{Semi-homogeneous vector bundles on an abelian variety},
J. Math. Kyoto Univ. {\bf 18} (1978), no. 2, 239--272. 
\end{NB}
\bibitem{Mukai}
Mukai, S.,
{\it Counterexamples to Kodaira vanishing and 
Yau's inequality in positive characteristics,}
Kyoto J. Math. {\bf 53}, (2013), 515-532.
\bibitem{Nuer}
Nuer, H., 
{\it A note on the existence of stable vector bundles onEnriques surfaces,}
arXiv:1406.3328. Selecta Math. to appear
\bibitem{SB}
Shepherd-Barron, N., 
{\it Unstable vector bundles and linear systems on 
surfaces in characteristic p,}
Invent. math. {\bf 106} (1991)  243 - 262.
\begin{NB}
\bibitem{Sch}
Schr\"{o}er, S., 
{\it The Hilbert scheme of points for supersingular abelian surfaces,}
Ark. Mat. {\bf 47}  (2009),  no. 1, 143--181. 
\bibitem{Y:Nagoya}
Yoshioka, K.,
{ Some notes on the moduli of stable sheaves on elliptic surfaces,}
Nagoya Math. J. {\bf 154} (1999), 73--102.
\end{NB}
\bibitem{Y:twist1}
Yoshioka, K.,
\emph{Twisted stability and Fourier-Mukai transform I},
Compositio~Math. {\bf 138} (2003), 261--288.
\begin{NB}
\bibitem{PerverseII}
Yoshioka,~K.,
\emph{Perverse coherent sheaves and Fourier-Mukai transforms on surfaces II},
Kyoto J. Math. {\bf 55} (2015), 365--459.
\end{NB}
\bibitem{Y:Enriques}
Yoshioka, K.,
{\it A note on stable sheaves on Enriques 
surfaces,} arXiv:1410.1794,
Tohoku Math. J. to appear 
\end{thebibliography}
\end{document}